\numberwithin{equation}{section}
\newcommand{\blank}{\mathrel{\;\cdot\;}}
\newcommand{\dell}{\partial}
\newcommand{\ass}{\quad\hbox{as }}
\newcommand{\gradperp}{\nabla^{\perp}}
\newcommand{\R}{\mathbb R}
\newcommand{\nn}{\nabla} 
\newcommand{\ve}{\varepsilon} 
\newcommand{\be}{\begin{equation}} 
\newcommand{\ee}{\end{equation}}
\newcommand{\bigO}{\mathcal{O}}
\newcommand{\twopartdef}[4]
   {
    \left\{
      \begin{array}{ll}
        #1 & \text{if } #2 \\
        #3 & \text{if } #4
      \end{array}
    \right.
   }
\DeclareMathOperator{\supp}{supp}
\DeclareMathOperator{\grad}{\nabla}
\newtheorem{definition}{Definition}[section]
\newtheorem{theorem}[definition]{Theorem}
\newtheorem{corollary}[definition]{Corollary}
\newtheorem{lemma}[definition]{Lemma}
\newtheorem{remark}[definition]{Remark}
\def\bcr{\begin{color}{red}}
\def\bcb{\begin{color}{blue}}
\def\ec{\end{color}}
\title{An Expanding Self-Similar Vortex Configuration for the $2$D Euler Equations}
\author{Juan D\'avila\footnote{Department of Mathematical Sciences, University of Bath, Bath, BA2 7AY, UK. \emph{Email Address}:\ {\tt jddb22@bath.ac.uk}}\ \ Manuel del Pino\footnote{Department of Mathematical Sciences, University of Bath, Bath, BA2 7AY, UK. \emph{Email Address}:\ {\tt mdp59@bath.ac.uk}}\ \ Monica Musso\footnote{Department of Mathematical Sciences, University of Bath, Bath, BA2 7AY, UK. \emph{Email Address}:\ {\tt mm2683@bath.ac.uk}}\ \ Shrish Parmeshwar\footnote{
CY Cergy Paris Université, Laboratoire Analyse, Géométrie, Modélisation, F-95302
Cergy-Pontoise, France. \emph{Email Address}:\ {\tt shrish.parmeshwar@cyu.fr}}}
\date{}
\begin{document}

\maketitle

\begin{abstract}
This paper addresses the long-time dynamics of solutions to the $2D$ incompressible Euler equations. We construct solutions with continuous vorticity $\omega_\ve(x,t)$ concentrated around points $\xi_j(t)$ that converge to a sum of Dirac delta masses as $\ve \to 0$. These solutions are associated with the Kirchhoff-Routh point-vortex system, and the points $\xi_{j}(t)$ follow an expanding self similar trajectory of spirals, with the support of the vorticities contained in balls of radius $3\varepsilon$ around each $\xi_{j}$.
\end{abstract}

\section{Introduction}\label{introduction-section}

We consider the Euler equations for an incompressible, inviscid fluid in $\mathbb{R}^2$. Written in terms of the vorticity $\omega(x,t)$ and stream function $\Psi(x,t)$, this system of equations is given by

\be \label{2d-euler-vorticity-stream}  \left\{   
 \begin{aligned}
\omega_{t}+\gradperp_{x}\Psi\cdot\nabla_{x}\omega&=0\ \ \ \ &&\text{in}\ \mathbb{R}^{2}\times (0,T),\\ \Psi= (-\Delta_{x})^{-1}&\omega\ \  &&\text{in}\ \mathbb{R}^{2}\times (0,T), \\
\omega(\cdot,0)= &\mathring{\omega}  &&\text{in}\ \mathbb{R}^{2}.
\end{aligned} \right.  \ee
The Poisson equation in \eqref{2d-euler-vorticity-stream} above implies that for suitable $\omega$, $\Psi$ is given by
$$
 \Psi(x,t)= (-\Delta_{x})^{-1}\omega (x,t)   = \frac 1{2\pi} \int_{\mathbb R^2}\log \frac 1{|x-y|} \omega (y,t)\, dy. \ \  
$$
The fluid velocity field  $v =  \nn^\perp \Psi$  is then recovered by the Biot-Savart Law:
$$
v(x,t) =  \frac 1{2\pi} \int_{\mathbb R^2} \frac {(y-x)^\perp} {|y-x|^2} \omega (y,t)\, dy \ ,
$$
where $z^{\perp}=(z_{2},-z_{1})$.

\medskip
The Cauchy problem for \eqref{2d-euler-vorticity-stream} admits a global well-posedness theory for weak solutions arising from initial data in $L^1(\R^2)\cap L^\infty (\R^2)$; see classical work by Wolibner \cite{Wolibner1933} and Yudovich \cite{Yudovich1963}. In addition, any extra regularity for the initial data $\mathring{\omega}$ is propagated and also present for the solution $\omega$. See for instance Chapter 8 in Majda-Bertozzi's book \cite{majdabertozzi}. 

\medskip
Carrying on the work started in \cite{DDMP2VP2023}, this paper is concerned with constructing solutions to \eqref{2d-euler-vorticity-stream} that exhibit nontrivial dynamics as $t\to \infty$. We are interested in solutions with a finite number of regions with highly {\em concentrated} vorticity, and in particular, of the form 
\be \label{ome}
\omega_\ve (x,t) =  \sum_{j=1}^N   \frac{m_j} {\ve^2}  \Big [  W\left ( \frac {x-\xi_j (t) } \ve   \right ) + o(1) \Big ] ,
\ee
around points $\xi_1(t), \ldots, \xi_N(t)$, where $W(y)$ is a certain, chosen radially symmetric profile with unit mass, $o(1) \to 0 $ as $\ve \to 0$ and $m_j \in \mathbb{R}$ are the circulations (or masses) of the vortices. Consequently these solutions will satisfy
\begin{equation}\label{vortices-to-deltas}
\omega_\ve(x,t)  \rightharpoonup  \omega_S(x,t) = \sum_{j=1}^N   {m_j} \delta_0  (  {x-\xi_j (t) } ) \ass \ve\to 0 ,
\end{equation}
where $\delta_0(y)$ is a Dirac mass at $0$.
%
Formally the points $\xi_j(t)$ should solve the Kirchhoff-Routh point-vortex ODE system
\begin{equation}\label{KR}
\dot \xi_j (t) = -\sum_{\ell \not= j} \frac{m_\ell}{2\pi} \frac{(\xi_j(t) -\xi_\ell (t) )^\perp}{|\xi_j(t) -\xi_\ell (t)|^2}, \quad j=1, \ldots , N.
\end{equation}
Using complex variables, $z_{j}=\xi_{j}^{1}+i\xi_{j}^{2}$, we can rewrite \eqref{KR} as
\begin{equation}\label{KRComplex}
    \overline{\frac{dz_{j}}{dt}}=\sum_{\ell\neq j}\frac{m_{\ell}}{2\pi i}\frac{1}{z_{j}-z_{l}}, \quad j=1,\dots,N.
\end{equation}
In \cite{DDMW2020} the following was proven:  {\em Given a finite time $T>0$ and a solution $\left( \xi_1 , \ldots , \xi_N  \right)$ to  System \eqref{KR} with no collisions  in $[0,T]$,  namely with
$$ \inf_{t\in [0,T],\, i\ne j} |\xi_i(t) -\xi_j(t)| >0 , $$ there exists a solution of \eqref{2d-euler-vorticity-stream} with the precise form \eqref{ome} and the specific choice  
$W(y) =  \frac 1{\pi} \frac 1{(1+|y|^2)^2} $.} 

\medskip
It was also shown in \cite{MP1993} previously that initial conditions supported on $\ve$-disks concentrated around the points $\xi_j (0)$ as bumps with masses $m_j$ produced solutions satisfied the convergence to a sum of delta masses, \eqref{vortices-to-deltas}. However, this work was without the asymptotic description \eqref{ome} near the vortices.   

\medskip
Both of these works rely crucially on the finiteness in time $T>0$  in order to get uniform estimates in $[0,T]$ as $\ve \to 0$. It is not clear that, in general, the solutions in \cite{MP1993,DDMW2020} keep the form \eqref{ome} as time evolves. Such a long time result would be desirable, as trajectories of system \eqref{KR} generically do not collide, see Chapter 4 in \cite{MarchioroPulvirentiBook}.

\medskip
This paper and similar work carried out in \cite{DDMP2VP2023} deals with the construction of solutions to 
the Euler equations \eqref{2d-euler-vorticity-stream} that persist \eqref{ome} and satisfy the convergence \eqref{vortices-to-deltas} {\em at all times}.

\medskip
The problem of understanding long-time behaviours and patterns for solutions of \eqref{2d-euler-vorticity-stream} still
has many open problems, and has been the subject of many conjectures. See for instance \cite{elgindimurraysaid} and its references.

\medskip
Clearly, in the analysis of long-time dynamics, steady solutions such as stationary solutions, travelling waves, and rotating solutions are of fundamental. See for instance \cite{ao,HmidiMateu,smets-vanschaftingen,caowei} and references therein. The problem of finding such solutions is typically reduced to solving semilinear elliptic equations.

\medskip
Part of why steady solutions are so fundamental, is that they offer hope in finding time-dependent configurations that satisfy \eqref{ome} for all time, in the form of solutions that closely resemble superpositions of steady ones. Indeed, this is the broad strategy used by the authors in \cite{DDMP2VP2023}, and in this work. However, in general, such a strategy has its own challenges, arising due to the emergence of coupling error terms at main order, and our limited understanding of the long-term behavior of perturbations to steady solutions. 

\medskip
In this paper, we will consider a configuration of three point vortices $\xi_{*j}$, $j=1,2,3$, whose trajectories are three similar spirals meaning that the whole configuration separates as a rotating and expanding triangle. In complex form the trajectories are given, for $j=1,2,3$ by
\begin{align}
    \xi_{*j}^{1}(t)+i\xi_{*j}^{2}(t)=z_{*j}(t)=z_{*j}(0)\left(1+\frac{t}{\tau}\right)^{\frac{1}{2}+i\Lambda\tau}, \quad j=1,2,3,\label{self-similar-spiral-ode-solution-complex}
\end{align}
where $\tau$ and $\Lambda$ are positive constants related to the initial data. Defining, for $i,j=1,2,3$,
\begin{align}
    L_{ij}(t)\coloneqq\left|z_{*i}(t)-z_{*j}(t)\right|=\left|\xi_{*i}(t)-\xi_{*j}(t)\right|=L_{ij}(0)\left(1+\frac{t}{\tau}\right)^{\frac{1}{2}},\label{point-vortex-pairwise-distance-function}
\end{align}
we also require
\begin{align}
    m_{1}+m_{2}+m_{3}>0, \quad m_{1},m_{2}>0, \quad m_{3}<0,\label{mass-constraint}\\
    m_{1}m_{2}+m_{2}m_{3}+m_{1}m_{3}=0,\label{harmonic-mean-condition}\\
    m_{1}m_{2}L_{12}(t)^{2}+m_{2}m_{3}L_{23}(t)^{2}+m_{1}m_{3}L_{13}(t)^{2}=0,\label{angular-momentum-condition}\\
    L_{23}(0)\neq L_{13}(0),\label{no-equilateral-triangle-condition}
\end{align}
where the $m_{j}$ are the masses of the vortices $\xi_{*j}$. For a full description of the trajectories of the corresponding point vortex solution to Kirchhoff-Routh system \eqref{KR}, see \cite{aref1979,novikovsedov,aref}. We also provide more detail in Section \ref{modified-vortices}.

\medskip
We now describe the steady state that we will use as a model for a single vortex in this paper. Let $\Gamma(x)$ be the radial classical solution of the problem
\begin{align}
\Delta_{x} \Gamma+\Gamma^{\gamma}_{+}=0 \ \text{on}\ \mathbb{R}^{2} , \quad \{ \Gamma>0\} = B_1
\label{power-semilinear-problem-R2}
\end{align}
Let $M$ be the mass, or circulation, of this vortex, that is
\begin{align}
M\coloneqq\int_{\mathbb{R}^{2}}\Gamma^{\gamma}_{+}dx.\label{vortex-mass-definition}
\end{align}
The main result of this paper is the construction of an initial vorticity of three compactly supported, $\varepsilon$-concentrated functions that generate a  time evolved vorticity that remains localised around three points $\xi_{*j} (t)$, $j=1,2, 3$, at all times $t$. Suppose $\left(m_{1},m_{2},m_{3}\right)$ are three masses for a solution of the form \eqref{self-similar-spiral-ode-solution-complex} to \eqref{KR} satisfying \eqref{mass-constraint}--\eqref{no-equilateral-triangle-condition}. We state our result in the following form

\begin{theorem}\label{teo1} Let $\gamma\geq19$. There exists a constant $C>0$ such that for all $\varepsilon>0$ small enough, and all $T_0>0 $ large enough, there exists a solution to \eqref{2d-euler-vorticity-stream}, $(\omega_\ve , \Psi_\ve)$, on the whole interval $[T_0,\infty)$, with
\begin{equation}\label{vorticityexp}
\omega_\ve (x,t) =  \varepsilon^{-2}\sum_{j=1}^{3}\frac{m_{j}}{M}\left(\Gamma\left(\frac{x-\xi_{j}(t)}{\varepsilon}\right)\right)^{\gamma}_{+} + \varepsilon^{-2} \phi_\ve (x,t),
\end{equation}
where we have the decomposition of the trajectories $\xi_{j}=\xi_{*j}+\tilde{\xi}_{j}$ with $\xi_{*j}$ defined in complex variables in \eqref{self-similar-spiral-ode-solution-complex}, and $\tilde{\xi}_{j}$ functions $[T_{0},\infty):\rightarrow\mathbb{R}^{2}$, that satisfy, for all $t\in\left[T_{0},\infty\right)$ and $j=1,2,3$, 
\begin{align*}
|t^{\frac{3}{2}}\tilde{\xi}_{j}\left(t\right)|+|t^{\frac{5}{2}}\dot{\tilde{\xi}}_{j}\left(t\right)|&\leq C\ve^{4-\sigma},
\end{align*}
for some small $\sigma>0$.

\medskip
The function $\phi_\ve $ in \eqref{vorticityexp} is continuous, and satisfies
$$
|\phi_\ve (x,t) | \leq \frac{C\ve^{1-\sigma }}{t^{\frac{1}{2}-\sigma}} , \quad \|\phi_\ve (\cdot,t)\|_{L^{2}\left(\mathbb{R}^{2}\right)}\leq \frac{C\ve^{2}}{t},  \quad \text{for all } t >T_0,
$$
for any $\sigma >0$ small. The support of $\phi_{\varepsilon}$ satisfies
$$
{\mbox {supp}} \, \phi_\ve (x,t) \subseteq \bigcup_{j=1}^{3}B_{3\ve } \left(\xi_{*j}\right),  \quad \text{for all } t >T_0.
$$
\end{theorem}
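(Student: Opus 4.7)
The plan is to follow an inner--outer gluing strategy in the spirit of \cite{DDMW2020} and the companion work \cite{DDMP2VP2023}, but adapted to the infinite time interval $[T_{0},\infty)$. I would take as approximate vorticity
$$
U_\ve(x,t) \;\coloneqq\; \ve^{-2}\sum_{j=1}^{3} \tfrac{m_{j}}{M}\Gamma^{\gamma}_{+}\!\left(\tfrac{x-\xi_{j}(t)}{\ve}\right), \qquad \xi_j(t) \coloneqq \xi_{*j}(t)+\tilde{\xi}_{j}(t),
$$
and determine $\tilde\xi_j$ and the correction $\phi_\ve$ simultaneously by a coupled fixed-point argument. The key computation is the vorticity error $E_\ve \coloneqq \dell_t U_\ve + \gradperp\Psi[U_\ve]\cdot\nabla U_\ve$. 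After rescaling to the inner variable $y=(x-\xi_j)/\ve$, the $j$-th contribution becomes proportional to
$$
\nabla\!\big(\tfrac{m_{j}}{M}\Gamma^{\gamma}_{+}\big)(y)\cdot \Big[-\dot{\xi}_{j}(t) + \gradperp\Psi[U_\ve](\xi_j+\ve y,t)\Big].
$$
Taylor expanding the stream functions produced by the vortices $\ell\neq j$ about $\xi_j$ and invoking the Kirchhoff--Routh identity \eqref{KR} satisfied by $\xi_{*j}(t)$, the bracket reduces to $-\dot{\tilde{\xi}}_j + DF_j(\xi_*)\tilde\xi + \bigO(\ve^{2}/L_{ij}^{3})$. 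Since $L_{ij}\sim t^{1/2}$ by \eqref{point-vortex-pairwise-distance-function}, the leading inner error has size $\bigO(\ve^{2}t^{-3/2})$, which dictates the decay rates in the theorem's statement.

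\textbf{Linearized inner system and modulation equations.} Writing $\omega_\ve = U_\ve + \ve^{-2}\phi_\ve$, localizing $\phi_\ve$ with cut-offs in $B_{3\ve}(\xi_j)$, and coupling through $\psi_\ve=(-\Delta)^{-1}(\ve^{-2}\phi_\ve)$, one obtains in each inner region an equation of the schematic form
$$
\dell_\tau \phi^{j} + \mathscr{L}_j \phi^{j} \;=\; \ve^{2}\,\text{(error)} + \mathscr{N}_j(\phi_\ve),
$$
where $\mathscr{L}_j$ is the linearization of the self-interaction of $\tfrac{m_j}{M}\Gamma^{\gamma}_{+}$ transported along the radially symmetric flow generated by that profile itself. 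A direct differentiation shows that $\mathscr{L}_j$ admits the two-dimensional kernel spanned by the translation modes $\dell_{y_k}\Gamma^{\gamma}_{+}$, $k=1,2$. Imposing orthogonality of the inner right-hand side to these modes forces, at leading order, the modulation system
$$
\dot{\tilde{\xi}}_j \;=\; A_{j}(t)\tilde{\xi} + \mathcal{R}_{j}[\tilde{\xi},\phi_\ve], \qquad A_{j}(t)=\bigO(t^{-1}),
$$
with $A_j(t)$ coming from the Jacobian of \eqref{KR} at $\xi_{*j}(t)$ and $\mathcal{R}_{j}$ collecting Taylor remainders and nonlinear back-reaction of $\phi_\ve$, of size $\bigO(\ve^{4-\sigma}t^{-5/2})$. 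Integrating from $+\infty$ in the weighted norm $\|\tilde\xi\|_* \coloneqq \sup_{t\geq T_0}\!\big(t^{3/2}|\tilde\xi|+t^{5/2}|\dot{\tilde\xi}|\big)$ selects the decaying solution and yields $\|\tilde\xi\|_* \lesssim \ve^{4-\sigma}$, which is precisely the bound in Theorem \ref{teo1}.

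\textbf{Fixed point and main obstacle.} Solving the orthogonal component of the inner system for $\phi_\ve$ and gluing the inner pieces through the nonlocal outer correction, I would close the construction as a Banach contraction in a product space whose norms enforce the decays $|\phi_\ve|\lesssim \ve^{1-\sigma}t^{-1/2+\sigma}$, $\|\phi_\ve\|_{L^{2}}\lesssim \ve^{2}t^{-1}$ together with $\|\tilde\xi\|_*\lesssim \ve^{4-\sigma}$. Smallness of $\ve$ and of $T_0^{-1}$ absorb both the nonlinearity and the coupling between vortices (which decays like $t^{-1/2}$ because of the self-similar expansion). The main technical obstacle is the infinite-time linear theory for $\dell_\tau + \mathscr{L}_j$: unlike in the finite-time constructions \cite{MP1993,DDMW2020}, any slow growth in the Duhamel kernel of the inner transport--elliptic system compounds over $[T_0,\infty)$ and destroys the iteration. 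The proof therefore hinges on establishing a coercive, time-uniform energy inequality for $\mathscr{L}_j$ on the $L^{2}$ orthogonal complement of $\{\dell_{y_1}\Gamma^\gamma_+,\dell_{y_2}\Gamma^\gamma_+\}$, combined with boundary-regularity estimates at the free boundary $\{\Gamma=0\}$; the hypothesis $\gamma\geq 19$ is what makes the resulting spectral and polynomial-moment bounds compatible with the size of the coupling and nonlinear errors, allowing the scheme to close uniformly on $[T_0,\infty)$.
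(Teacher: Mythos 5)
Your overall architecture (inner--outer gluing, modulation of $\tilde\xi_j$ via orthogonality, a coercivity estimate for the linearised inner operator on the complement of the kernel, and the role of $\gamma\geq 19$ in closing the weighted estimates) matches the paper's strategy. However, there is a concrete gap in your error computation that the rest of your scheme cannot absorb. When you Taylor expand $\gradperp\Psi[U_\ve](\xi_j+\ve y,t)$ about $\xi_j$ and subtract the Kirchhoff--Routh velocity, the first nonvanishing remainder is the \emph{first-order} term in $\ve y$, of size $\bigO(\ve |y|/L_{ij}^2)=\bigO(\ve t^{-1})$, not the second-order term $\bigO(\ve^2/L_{ij}^3)$ that you quote. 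Because the vortices separate only like $t^{1/2}$ (rather than like $t$ as in \cite{DDMP2VP2023}), this mode-$2$ term decays too slowly: feeding it directly into the transport equation $\ve^2\dell_t\phi+\dots=E$ produces a logarithmically divergent time integral, so no contraction with the norms you propose can close. The paper removes it (together with the mode-$3$ and mode-$4$ terms) by first constructing an explicit elliptic improvement of the ansatz --- solving $\Delta\psi_{j1}^{(l)}+\gamma\Gamma^{\gamma-1}_{+}\psi_{j1}^{(l)}=-\kappa_j^{-1}\gamma\Gamma^{\gamma-1}_+\sum_{i\neq j}\tfrac{m_i}{4\pi}\mathcal{E}^{(ji)}_l$ mode by mode as in \eqref{psi-j1-ODEs}, and then a second correction $\psi_{j2}$ --- before any fixed point is attempted; only after this does the residual error reach the size $\ve^{5-\sigma}t^{-5/2}$ required by the a priori estimate \eqref{L2-interior-a-priori-estimate-lemma-error-bound-statement}. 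Moreover, the precise modal structure of these corrections is what guarantees the zero-mass identity \eqref{first-approximation-zero-mass-2}, without which the projected problem would need an extra $\alpha_j(t)U$ direction (see Remark \ref{no-alpha-needed-remark}); your proposal has no substitute for this.

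A second, lesser issue is the infinite-time framework. You propose to run a Banach contraction directly on $[T_0,\infty)$ and to "integrate from $+\infty$," but the inner problem is a transport equation whose backward-in-time solution from $t=\infty$ is not defined without further work; you correctly flag this as the main obstacle but do not resolve it. The paper instead solves terminal value problems on $[T_0,T]$ with $\phi_{*j}(\cdot,T)=0$ and $\tilde\xi_j(T)=0$, obtains $T$-uniform decay estimates, and extracts the global solution by Arzel\`a--Ascoli; existence on each finite interval is obtained by a degree-theoretic homotopy rather than a contraction. Finally, note that the kernel relevant to the transport linearisation $\gradperp_y\Gamma\cdot\nabla_y(\Delta\psi+\gamma\Gamma^{\gamma-1}_+\psi)$ is not just the translations (every radial function is annihilated), which is why the paper's orthogonality conditions are the moment conditions \eqref{tilde-phi-star-R-0-mass-condition} against $1,y_1,y_2$ rather than orthogonality to $\dell_{y_k}\Gamma^\gamma_+$; the coercivity input is the quadratic-form bound of Lemma \ref{quadratic-form-estimate-lemma}, which requires exactly those moment conditions.
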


The functions $ \Psi_\ve (x, t+T_0)$,  $ \omega_\ve (x, t+T_0)$,  $t>0$ form a solution of \eqref{2d-euler-vorticity-stream} in $[0,\infty)$ and has the expected properties in the whole interval $[0,\infty)$. As in \cite{DDMP2VP2023}, the condition $\gamma\geq19$ is a technical hypothesis required in the proof of some a priori estimates. 

\medskip
By $T_{0}>0$ large enough, in particular, we require $L_{ij}\left(T_{0}\right)\geq 2\max_{k,l=1,2,3}{L_{kl}\left(0\right)}$, for all $i,j=1,2,3$, $i\neq j$ with $L_{ij}\left(t\right)$, defined in \eqref{point-vortex-pairwise-distance-function}, and we also have the freedom to enlarge the size of $T_{0}$ in order to satisfy an estimate in the proof of Lemma \ref{L2-interior-a-priori-estimate-lemma}.

\medskip
We remark here that in \cite{novikovsedov}, the existence of expanding self similar spirals analogous to \eqref{self-similar-spiral-ode-solution-complex}, but with $4$ or $5$ points are constructed. Nothing in our proof relies on only having $3$ vortices in the configuration, so our method of construction goes through for these $4$ and $5$ point configurations without any extra mathematical difficulty, as long the collection of masses satisfy analogous constraints to \eqref{mass-constraint}--\eqref{no-equilateral-triangle-condition}.
 
\medskip
Our result is an example of a solution to \eqref{2d-euler-vorticity-stream} where one has precise information on the long-time behaviour, a type of result that, as mentioned above, remains rare for the $2D$ Euler equations. Whilst we construct initial conditions that allow us to exhibit the long time behavior claimed in Theorem \ref{teo1}, we do not claim that perturbing our initial data will preserve this structure. Physically, experiments and simulations suggest that most solutions to \eqref{2d-euler-vorticity-stream} evolve over infinite time towards simpler dynamics. The following informal conjecture has been formulated for the $2D$ Euler equations in \cite{sve}, \cite{sch}: as $t\to \pm \infty$ generic solutions experience loss of compactness. 
Significant  confirmations of this conjecture have emerged   near specific steady states. Important stability results for shear flows have been found in \cite{Bedrossian,Ionescu, ionescu2,masmoudizhao}. Stability results for fast-decay, radial steady states have been found \cite{Bedrossian,Ionescu,choi-lim}. See also \cite{elgindimurraysaid} for discrete symmetry configurations, and \cite{abe-choi,bnl} for travelling waves.

\medskip
Well-known non-smooth standard flows are vortex patches. These are solutions with discontinuous but bounded and integrable vorticities. They also enjoy additional properties when compared to the more regular desingularized solutions we construct in this work. For vortex patches, there are several long-time results. In \cite{Zbarsky}, a global solution to \eqref{2d-euler-vorticity-stream} is found, consisting of vortex patches approximating the expanding self similar solution to \eqref{KR} with $N=3$ given by \eqref{self-similar-spiral-ode-solution-complex} that we also consider in this paper. Thus, one may naturally compare the patch solutions constructed in \cite{Zbarsky} and the solutions constructed in the present work. Apart from a difference in regularity, it is important to note that in \cite{Zbarsky}, each patch may grow in size at a rate $t^{\frac{1}{4}^+}$, compensated by the fact that they separate from each other at a rate $t^{\frac{1}{2}}$. This seems crucial in the analysis. See also \cite{serfati}. Our solutions also separate at a rate of $t^{\frac{1}{2}}$, however as noted in Theorem \ref{teo1}, we find a uniform-in-time bound on the supports around the centre of each region of concentrated vorticity, a fundamentally qualitative difference to \cite{Zbarsky}.

\medskip
In addition to the above discussion, existence of co-rotating and counter-rotating pair vortex patches was shown in \cite{HmidiMateu}. Logarithmic spiraling solutions were also recently constructed in \cite{Jeongsaid}. For other related results, we refer to \cite{ej1} and to the results cited within. We finally mention the interesting recent paper 
\cite{HmidiMasmoudi} where solutions in the form of two vortex pairs leapfrogging each other are constructed with the use of KAM theory.  

\medskip
We discuss the scheme of the proof in the next section.

\section{Scheme of the proof}\label{scheme}

The strategy we adopt to construct the  solution predicted by Theorem \ref{teo1}  is to first  produce a solution to the Euler equations \eqref{2d-euler-vorticity-stream} on a finite and arbitrarily large time interval $[T_0, T]$, for  $T \gg T_0$. This solution is built as a perturbation of the sum of three $\varepsilon$-concentrated vortices traveling along the similar spirals discussed in the previous section. The perturbation will be small and decay in time.  To conclude our argument we apply the Ascoli-Arzel\'a Theorem for a sequence of times $(T_n)_n$, with $T_n\to \infty$ as $n \to \infty$ and show that the limit function gives the desired solution in the whole interval  $[T_0, \infty)$. We do this in Section \ref{conclusion}.

\medskip
The main step in our proof is the construction of the solution to the Euler equations \eqref{2d-euler-vorticity-stream} on $[T_0,T]$ as described above. 

\medskip
To do this we begin as mentioned above, by perturbing an ansatz of three $\varepsilon$-concentrated power nonlinearity vortices defined in \eqref{power-semilinear-problem-R2} and Section \ref{power-nonlinearity-section}. To construct a good first approximation therefore requires a good understanding of the linear operator obtained by linearising around the power nonlinearity vortices. This information is the content of Appendix \ref{appendix-section}. The explanation of this procedure is the content of Section \ref{appro}, and is carried out in detail in \ref{first-approximation-section}. We note that at the level of the first approximation we must deal with the fact that the point-vortex solutions to \eqref{KR} given by \eqref{self-similar-spiral-ode-solution-complex} only separate at $t^{\frac{1}{2}}$ as opposed to the rate of $t$ which we saw in \cite{DDMP2VP2023}. This leads to less decay in time when constructing the first approximation, and requires a precise tracking of the modal structure of the approximation in order to guarantee enough time decay to perform the bootstrapping argument that lets us construct solutions on $\left[T_{0},T\right]$. For example, see Remark \ref{first-approximation-zero-mass}.

\medskip
In addition, due to this weaker separation rate, it is crucial that we do not lose any time decay when inverting the linearised operator around the solutions \eqref{self-similar-spiral-ode-solution-complex}, like we do for the analogous linearised operator in \cite{DDMP2VP2023}, and thus it is necessary to carry out a precise analysis into the longtime asymptotics for the linearisation around the solutions \eqref{self-similar-spiral-ode-solution-complex}, performed in Section \ref{point-vortex-trajectory-section}.

\medskip
We then construct solutions to \eqref{2d-euler-vorticity-stream} on $\left[T_{0},T\right]$ in a manner closely related to the fixed point argument used in \cite{DDMW2020}. The explanation of this is the content of Section \ref{cs} and is carried out in Section \ref{constructing-full-solution-section}.

\subsection{Power Nonlinearity Vortex}\label{power-nonlinearity-section}
In Section \ref{introduction-section} we introduced the function  $\Gamma$ solving \eqref{power-semilinear-problem-R2}, which we can write as
\begin{align}
    \Gamma(x)=\twopartdef{\nu(|x|)}{|x|\leq1}{\nu'(1)\log{|x|}}{|x|>1},\label{Gamma-piecewise-definition}
\end{align}
    where $\nu (|x|)$ is the unique positive, radial, ground state solving
$$
    \Delta_{x}\nu+\nu^{\gamma}_{+}=0 \ \text{on}\ B_{1}.
    $$
Recalling that 
$M=\int_{\mathbb{R}^{2}}\Gamma^{\gamma}_{+}dx$
(see \eqref{vortex-mass-definition}),
a direct computation gives
\begin{align}
    M=2\pi\left|\nu'(1)\right|,\label{mass-in-terms-of-nu}
\end{align}
where we note that $\nu'(1)<0$.

\medskip

For any $\varepsilon >0$, rescaling $\Gamma$ by $\varepsilon$ leaves it $C^1$ across the boundary of the ball radius $\varepsilon$. Thus, the rescaled $\Gamma$ solve the following family of semilinear elliptic equations
 \begin{equation}\label{Gamma-epsilon-elliptic-equation}
\begin{aligned}
    \Delta_{x}\Gamma\left(\frac{x}{\varepsilon}\right)+\varepsilon^{-2}\left(\Gamma\left(\frac{x}{\varepsilon}\right)\right)^{\gamma}_{+}=0,
\end{aligned}
\end{equation}
which are solutions with concentrated vorticity in the sense of \eqref{ome}--\eqref{vortices-to-deltas} for each $\varepsilon>0$ small enough. Observe that for all $\varepsilon>0$
\begin{align}
  \varepsilon^{-2}\int_{\mathbb{R}^{2}}\left(\Gamma\left(\frac{x}{\varepsilon}\right)\right)^{\gamma}_{+}dx=\int_{B_{1}}\Gamma^{\gamma}_{+}\left(y\right)\  dy=M\label{epsilon-vortex-mass}.
\end{align}

\subsection{Vortex trajectories}
\label{modified-vortices}
From \cite{aref1979,aref}, and as mentioned in the discussion \eqref{self-similar-spiral-ode-solution-complex}--\eqref{no-equilateral-triangle-condition}, we know that there is a solution $\left(\xi_{*1}(t),\xi_{*2}(t),\xi_{*3}(t)\right)$ to the system \eqref{KR} for $N=3$, given by, in complex form, 
\begin{align*}
    z_{*j}(t)=z_{*j}(0)\left(1+\frac{t}{\tau}\right)^{\frac{1}{2}+i\Lambda\tau},
\end{align*}
so that
\begin{align}
    z_{*j}(t)=\xi_{*j}^{1}(t)+i\xi_{*j}^{2}(t),\label{xi-star-j-definition}
\end{align}
where the $\xi_{*j}^{i}(t)$ are the vector components of $\xi_{*j}(t)$. 

\medskip

Following the construction in \cite{aref1979}, the mass constraints \eqref{mass-constraint}, the harmonic mean constraint \eqref{harmonic-mean-condition}, and the angular momentum constraint \eqref{angular-momentum-condition} can be used to show that the lengths $L_{12}(t),L_{23}(t),L_{13}(t)$ defined in \eqref{point-vortex-pairwise-distance-function} are all proportional to one another. Let $A(t)$ be the area of the triangle formed at time $t$ by the vortices. If we suppose that $L_{23}(t)>L_{12}(t)>L_{13}(t)$, and that the triangle the point vortices form has positive orientation, we can conclude the right hand side of the ODEs
\begin{align*}
    \frac{d}{dt}\left(L_{12}^{2}\right)&=\frac{2}{\pi}m_{3}A(t)\left(\frac{1}{L_{23}^{2}}-\frac{1}{L_{13}^{2}}\right)\\
    \frac{d}{dt}\left(L_{23}^{2}\right)&=\frac{2}{\pi}m_{1}A(t)\left(\frac{1}{L_{13}^{2}}-\frac{1}{L_{12}^{2}}\right)\\
    \frac{d}{dt}\left(L_{13}^{2}\right)&=\frac{2}{\pi}m_{2}A(t)\left(\frac{1}{L_{12}^{2}}-\frac{1}{L_{23}^{2}}\right)
\end{align*}
are all positive constants, once again using the mass constraints \eqref{mass-constraint}, and the fact that $A(t)$ is proportional to the square of all the side lengths. The constant $\tau$ can then be calculated by integrating any three of the above ODEs:
\begin{align*}
    \frac{1}{\tau}=\frac{2}{\pi}m_{3}\frac{A(0)}{L_{12}^{2}(0)}\left(\frac{1}{L_{23}^{2}(0)}-\frac{1}{L_{13}^{2}(0)}\right).
\end{align*}
\medskip
From there, we can show that 
\begin{align}
    \Lambda=\frac{\left(m_{1}+m_{2}\right)^{3}+m_{1}^{3}\left(1+\Pi^{-1}\right)+m_{2}^{3}\left(1+\Pi\right)}{4\pi\left(m_{1}+m_{2}\right)^{2}L_{12}(0)^{2}},\label{omega-definition}
\end{align}
where
\begin{align*}
    \Pi=\frac{m_{1}}{m_{2}}\frac{L_{13}(0)^{2}}{L_{23}(0)^{2}},
\end{align*}
and finally we use the length constraint \eqref{no-equilateral-triangle-condition} to ensure that the initial triangle formed by the point vortices is not an equilateral, which then automatically excludes the case of rigid motion in the form of rotation with no expansion.
\subsection{Approximate solution on $[T_0,T]$}\label{appro}

Our initial step in developing a two-vortex pair solution to the Euler equations \eqref{2d-euler-vorticity-stream} is to create a reasonably accurate first approximation within a finite time interval $\left[T_{0}, T\right]$. We treat $T$ as an arbitrarily large parameter, with the intention of eventually taking $T\to\infty$ to construct our complete solution over the interval $\left[T_{0},\infty\right)$. 
\medskip
 
We  fix $T>0$, a large time satisfying
\begin{align}
    T\gg T_{0}.\label{T-definition}
\end{align}
To describe the approximate solution  we first introduce, for $j=1,2,3$, 
\begin{align}
     \xi_{j}(t)=(\xi^{1}_{j}(t),\xi^{2}_{j}(t)),\label{vortex-trajectories-definition-new}
\end{align}
time dependent vector fields, moving in $\mathbb{R}^{2}$. We assume they have the form  
\begin{align}
    \xi_{j}(t)=\xi_{*j}(t) +\tilde{\xi}_{j} (t).\label{point-vortex-trajectory-function-decomposition-new}
\end{align}
The first terms $\xi_{*j}(t)$ are explicit and given in complex form by \eqref{self-similar-spiral-ode-solution-complex}. We can see that they satisfy
\begin{equation}\label{xi0-bound}
\begin{aligned}
    \|(1+t)^{-\frac{1}{2}}\xi_{*j}\|_{\infty}+\|(1+t)^{\frac{1}{2}}\dot{\xi}_{*j}\|_{\infty}&\leq C_{1},
\end{aligned} 
\end{equation}
where the norm is the $L^{\infty}$ norm on functions $[0,\infty)\rightarrow\mathbb{R}^{2}$. The points 
$\tilde{\xi}_{j}(t)=\left(\tilde{\xi}^{1}_{j}(t),\tilde{\xi}^{2}_{j}(t)\right)$ are free parameters to adjust at the end of our construction to obtain a true solution on $[T_0,T]$. For the
moment, we ask that they are continuous functions on $[T_0, T ]$ for which $\dot{\tilde {\xi}}_{j}$ exists and such that
\begin{equation}\label{tildexi-bound}
\begin{aligned}
    \|t^{\frac{3}{2}}\tilde{\xi}_{j}\|_{\left[T_{0},T\right]}+\|t^{\frac{5}{2}}\dot{\tilde{\xi}}_{j}\|_{\left[T_{0},T\right]}\leq\varepsilon^{4-\sigma},\ \ \tilde{\xi}_{j}\left(T\right)=0,
\end{aligned}
\end{equation}
for some $\sigma >0$ arbitrarily small, and where the norm is the $L^{\infty}$ norm on functions $\left[T_{0},T\right]\rightarrow\mathbb{R}^{2}$.

\medskip
Let
\begin{align}
    \xi(t)=\left(\xi_{1}(t),\xi_{2}(t),\xi_{3}(t)\right),\label{point-vortex-decomposition-vector-form}
\end{align}
with an analogous definition for $\xi_{*}(t)$ and $\tilde{\xi}(t)$. We start our construction with a stream function $\Psi_0 [\xi] (x,t)$ defined as the sum of the stream function of three vortices travelling according to the point vortex solution described in Section \ref{modified-vortices}, denoted by $\Psi_{j}$, $j=1,2,3$. We proceed in a similar way with the vorticity. If the solution to the point vortex system $\xi_{*j}(t)$, $j=1,2,3$, have masses $m_{j}$, define
\begin{align}
    \kappa_{j}=\frac{m_{j}}{M},\label{kappa-j-definition}
\end{align}
where $M$ is defined in \eqref{vortex-mass-definition}.

\medskip
Then we write
\begin{equation} \label{psi-0-definition}
\begin{aligned}
    \Psi_{0}[\xi](x,t)&=\sum_{j=1}^{3}\kappa_{j}\Gamma\left(\frac{x-\xi_{j}(t)}{\varepsilon}\right),\\
    \omega_{0}[\xi](x,t)&=\varepsilon^{-2}\sum_{j=1}^{3}\kappa_{j}U\left(\frac{x-\xi_{j}(t)}{\varepsilon}\right),
\end{aligned}
\end{equation}
with $\Gamma$ defined in \eqref{power-semilinear-problem-R2}, and
\begin{align}
    U(x)=\left(\Gamma(x)\right)^{\gamma}_{+}.\label{power-nonlinearity-vorticity-definition}
\end{align}
For future convenience, we define
\begin{align}
    \Gamma_{j}(x,t)=\Gamma\left(\frac{x-\xi_{j}(t)}{\varepsilon}\right)\label{gamma-j-u-j-definition}
\end{align}
and similarly for $U_{j}(x,t)$.

\medskip
We introduce the Euler operators
\begin{equation}\label{euler-operators}
\begin{aligned}
    E_{1}\left[\Psi , \omega \right]&=\dell_{t}\omega(x,t)+\gradperp_{x}\Psi(x,t)\cdot\nabla_{x}\omega(x,t),\\
    E_{2}\left[\Psi, \omega\right]&=\Delta_{x}\Psi(x,t)+\omega(x,t).
\end{aligned}
\end{equation}
The error of approximation for $\left(\Psi_0 [\xi] (x,t), \omega_0 [\xi] (x,t) \right)$ is such that, for some constant $C>0$, it satisfies
$$
|E_1 \left[\Psi_0 [\xi], \omega_0 [\xi]\right] | =\bigO{\left(\varepsilon^{-2}\right)}, \quad {\mbox {supp}} \, \left ( E_1 \left[\Psi_0[\xi], \omega_0 [\xi]\right] \right) \subseteq \bigcup_{j=1}^{3}B_{\varepsilon}\left(\xi_{j}(t)\right).
$$
By \eqref{Gamma-epsilon-elliptic-equation}, we have
$$
E_2  \left[\Psi_0 [\xi], \omega_0 [\xi] \right] =0.$$
In Section \ref{first-approximation-section} we prove there exists an improved approximation $(\Psi_* [\xi] , \omega_*[\xi]  )$ 
such that the new error $E_1 \left[\Psi_* , \omega_* \right]$ satisfies, for some constant $C>0$,
$$
|E_1 \left[\Psi_* [\xi], \omega_* [\xi]\right] | \leq C \frac{\ve}{(1+t)^{\frac{5}{2}}}, \quad {\mbox {supp}} \, \left ( E_1 \left[\Psi_* [\xi], \omega_* [\xi] \right] \right) \subseteq \bigcup_{j=1}^{3}B_{\ve} (\xi_{j}(t)).
$$
Also,
$$
|E_2 \left[\Psi_* [\xi], \omega_* [\xi]\right] | \leq C \frac{\ve^{6}}{(1+t)^4}.
$$

\medskip
A crucial step in the construction of the improved approximation is to recognize the very special structure of the principal parts of the linear terms in $E_1 \left[\Psi_0 [\xi] + \Psi , \omega_0 [\xi]+ \omega \right]$. This structure manifests on the right hand side of $\mathbb{R}^{2}$ in the form of a linear elliptic problem, in coordinates $y=\varepsilon^{-1}\left(x-\xi_{j}(t)\right)$, as
\begin{align*}
    \gradperp_{y}\left(\Gamma(y)\right)\cdot\nabla_{y}\left(\mathscr{L}\left(\psi\right)\right)=G,
\end{align*}
where $\mathscr{L}$ is the linearized operator around the power nonlinearity vortex, for some right hand side $G$. This operator can then be rewritten in polar coordinates $y=re^{i\theta}$ as
\begin{align*}
    \frac{\Gamma'(r)}{r}\frac{\dell}{\dell \theta}\left(\mathscr{L}\left(\psi\right)\right)=G.
\end{align*}
Thus, if we can ensure the right hand side has no mode $0$ terms in its Fourier expansion, we can solve this problem via a set of ODEs for each Fourier mode.

\medskip
This allows us to improve the initial approximation $(\Psi_0 [\xi], \omega_0 [\xi])$ by solving  linear elliptic problems. This can be done under the assumption  that $\tilde \xi$ satisfies \eqref{tildexi-bound}. The approximate vorticity $\omega_* [\xi]$ is found of the form
\begin{equation}\label{omega*}
\omega_* = \omega_0 + \ve^{-2} \sum_{j=1}^{3}\kappa_{j}\phi_{j}, 
\end{equation}
so that $\phi_{j}$ has support contained in $B_{\varepsilon}(\xi_{j}(t))$, and satisfies the bound
$$
\| \phi_j \|_{L^\infty (\R^2 )} \leq  \frac{C\ve^2}{t}.$$

\subsection{Construction of a solution}\label{cs}

Next, let $K>0$ be a constant. Define cutoffs by
\begin{align}
    \eta_{j}(x,t)\coloneqq\eta_{0}\left(\frac{\left|x-\xi_{j}(t)\right|}{K\left(1+\frac{t}{\tau}\right)^{\frac{1}{2}}}\right),\label{cutoffs-definition-1}
\end{align}
where $\eta_{0}(r)$ is a cutoff that is $1$ on $r\leq1$ and $0$ on $r\geq2$.  We recall that
\begin{align}
    \supp U_{j}\subset B_{\varepsilon}\left(\xi_{j}(t)\right).\label{first-approximation-main-order-vorticity-support}
\end{align}
We can choose $K$ as
\begin{align}
    K=L_{13}(0)<L_{12}(0)<L_{23}(0),\label{K-definition-new}
\end{align}
as per the discussion in Section \ref{modified-vortices}. The choice of $K$ in \eqref{K-definition-new} ensures $\eta_{j}\equiv1$ on $\supp{U_{j}}$ for $j=1,2,3$, and the choice of $T_{0}$, given in Theorem \ref{teo1}, so that $L_{kl}\left(T_{0}\right)\geq2L_{23}(0)$ for $k,l=1,2,3$, $k<l$, ensures  $\eta_{j}\equiv0$ on $\supp{U_{k}}$ where $k\neq j$. 

\medskip
We look for a solution to the Euler equations
\begin{equation}\label{eu-finiteinterval}
E_1 [\Psi , \omega] (x,t) = E_2 [\Psi , \omega ] (x,t) = 0 \quad (x,t) \in \R^2 \times [T_0 , T]
\end{equation}
(see \eqref{euler-operators} for the definition of $E_1$ and $E_2$) of the form
$$
\Psi (x,t) = \Psi_* (x,t) + \psi_* (x,t), \quad \omega (x,t) = \omega_* (x,t) + \phi_* (x,t),
$$
where $\psi_*$ and $\phi_*$ are small corrections of the previously found first approximations. We decompose these remainders as follows
\begin{align*}
    \psi_{*}&=\sum_{j=1}^{3}\kappa_{j}\eta_{j}\psi_{*j}\left(\frac{x-\xi_{j}(t)}{\varepsilon},t\right)+\psi^{out}_*, \\
    \phi_{*}&=\varepsilon^{-2}\sum_{j=1}^{3}\kappa_{j}\phi_{*j}\left(\frac{x-\xi_{j}(t)}{\varepsilon},t\right).
\end{align*}
In terms of 
$$
\psi_*^{in} = (\psi_{*1},\psi_{*2},\psi_{*3}), \quad \psi^{out}_*, \quad \phi_*^{in} = (\phi_{*1},\phi_{*2},\phi_{*3})
$$
the Euler equations become
\begin{align*}
    E_{1}\left[\Psi , \omega \right]&=\varepsilon^{-4}\sum_{j=1}^{3}\kappa_{j}E_{*j1}\left(\phi_{*j},\psi_{*j},\psi_*^{out};\xi\right)=0, \\\
    E_{2}\left[\Psi , \omega \right]&=E_{*2}^{out}\left(\psi_*^{in},\psi_*^{out};\xi\right)=0,
    \end{align*}
where, recalling \eqref{omega*},
\begin{align*}
    E_{*j1}\left(\phi_{*j}, \psi_{*j}, \psi_*^{out}\right) &=\varepsilon^{2}\dell_{t}\left(U_j + \phi_j +\phi_{*j}\right)\\
    &+\varepsilon^{2}\gradperp_{x}\left(\Psi_{*}+ \kappa_{j}\psi_{*j}+ \psi_*^{out}\right)\cdot\nabla_{x}\left(U_j + \phi_j +\phi_{*j}\right),
\end{align*}
\begin{align*}
    E_{*2}^{out} \left( \psi_*^{out},\phi_{*}^{in}, \psi_*^{in} \right)&=\Delta_{x}\psi_*^{out}+\sum_{j=1}^{3}\kappa_{j}\left(\psi_{*j}\Delta_{x}\eta_{j}+2\nabla_{x}\eta_{j}\cdot\nabla_{x}\psi_{*j}\right)+ E_2 [\Psi_*, \omega_*].
\end{align*}
The
inner-outer gluing method consists of finding $\psi_*^{in}$, $\psi_*^{out}$, $\phi_*^{in}$ to satisfy the following system of coupled equations
\begin{equation}\label{inj}
\begin{aligned}
    E_{*j1} \left(\phi_{*j}, \psi_{*j}, \psi_*^{out}\right)&= 0 \quad {\mbox {in}} \quad \R^2 \times [T_0, T], \quad 
    \phi_{*j} (\cdot , T) = 0  \quad  {\mbox {in}} \quad \R^2 \\
    -\Delta_{x}\psi_{*j} & = \phi_{*j} \quad {\mbox {in}} \quad \R^2 \times [T_0, T]
\end{aligned}
\end{equation}
for $j=1,2,3,$ and
\begin{equation}\label{out}
E_{*2}^{out} \left(\psi_*^{out} , \phi_*^{in} , \psi_*^{in} \right) = 0 \quad {\mbox {in}} \quad \R^2 \times [T_0, T].
\end{equation}

A solution of the system \eqref{inj}-\eqref{out}, denoted as $(\phi_*^{in}, \psi_*^{in}, \psi_*^{out})$, can be used to construct a solution for equation \eqref{eu-finiteinterval} by a straightforward addition. If we ensure that the remainders $\psi_*$ and $\phi_*$ are suitably small, which can be achieved by adjusting $\tilde \xi$ in \eqref{point-vortex-trajectory-function-decomposition-new} suitably, we can obtain the desired solution. To do this, we will formulate the system as a fixed-point problem for a compact operator within a ball of an appropriate Banach space. We will then find a solution using a degree-theoretical argument, which entails establishing a priori estimates for a homotopical deformation of the problem into a linear one. 

\medskip
It should be noted that in order to construct a solution on $\left[T_{0},T\right]$ we require good a priori estimates on a certain linearized operator. For this we need a lower bound on a quadratic form that formally satisfies the structure
\begin{align}
    \int_{\mathbb{R}^{2}}\left(\frac{\Phi^{2}}{\mathfrak{K}}-\Phi\left(-\Delta\right)^{-1}\left(\Phi\right)\right),\label{quadratic-form-discussion}
\end{align}
for some fixed profile $\mathfrak{K}$ and some $\Phi$. Bounds of this nature are also important in \cite{DDMW2020, gallay1, gallay2}. Crucially, in those works, $\mathfrak{K}$ is positive on all of $\mathbb{R}^{2}$, so the integral makes sense with some decay assumptions on $\Phi$. However, the natural choice for $\mathfrak{K}$ in our case would correspond to $\gamma \Gamma^{\gamma-1}_{+}$, a function with compact support. This was, in essence, the same difficulty the authors encountered when constructing the solutions presented in \cite{DDMP2VP2023}. Therefore, a large part of Section \ref{constructing-full-solution-section} is devoted to obtaining lower bounds on a quantity that has similar structure to \eqref{quadratic-form-discussion} using ideas heavily inspired by the techniques developed in \cite{DDMP2VP2023}. These bounds are then used to obtain a priori estimates on the aforementioned linearized transport operator, which allows us to construct our solution on $\left[T_{0},T\right]$. 

\medskip
The above construction gives us a sequence of solutions indexed by $T$, for arbitrarily large $T$, satisfying \eqref{inj}, and good time decay estimates on $\left[T_{0},T\right]$. To construct a solution as stated in Theorem \ref{teo1}, we then use this time decay to obtain equicontinuity for this sequence of solutions on $\left[T_{0},T\right]$, which along with uniform boundedness, allows us to extract a subsequence, whose uniform limit solves \eqref{2d-euler-vorticity-stream} on $\left[T_{0},\infty\right)$, as desired.

\section{Linearisation Around Point Vortices}\label{point-vortex-trajectory-section}
In this section we analyse the linearised operator around the self similar spiral solution to the $3$ point vortex ODE described in \eqref{self-similar-spiral-ode-solution-complex}--\eqref{no-equilateral-triangle-condition} and Section \ref{modified-vortices}. We know that the vortex trajectories $\left(z_{*1},z_{*2},z_{*3}\right)$ can be written in complex form as
\begin{align*}
    z_{*j}(t)=z_{*j}\left(1+\frac{t}{\tau}\right)^{\frac{1}{2}+i\Lambda\tau},
\end{align*}
 and satisfy the system \eqref{KRComplex}, which can also be written as
\begin{align*}
    \frac{dz_{*j}}{dt}=-\frac{1}{2\pi i}\sum_{i\neq j}\frac{m_{i}\left(z_{*j}-z_{*i}\right)}{\left|z_{*j}-z_{*i}\right|^{2}}.
\end{align*}
To first formally calculate the linearised operator, for $j=1,2,3$, consider the quantity
\begin{align}
    -\frac{1}{2\pi i}\sum_{i\neq j}\frac{m_{i}\left(z_{*j}-z_{*i}+\zeta_{j}-\zeta_{i}\right)}{\left|z_{*j}-z_{*i}+\zeta_{j}-\zeta_{i}\right|^{2}}.\label{ode-linearisation-calculation-1}
\end{align}
We can write each summand as
\begin{align}
    &\frac{m_{i}\left(z_{*j}-z_{*i}+\zeta_{j}-\zeta_{i}\right)}{\left|z_{*j}-z_{*i}\right|^{2}}\frac{1}{1+\frac{\left(\zeta_{j}-\zeta_{i}\right)\cdot\overline{\left(z_{*j}-z_{*i}\right)}}{\left|z_{*j}-z_{*i} \right|^{2}}+\frac{\overline{\left(\zeta_{j}-\zeta_{i}\right)}\cdot\left(z_{*j}-z_{*i}\right)}{\left|z_{*j}-z_{*i} \right|^{2}}+\frac{\left|\zeta_{j}-\zeta_{i}\right|^{2}}{\left|z_{*j}-z_{*i}\right|^{2}}}\nonumber\\
    &=\frac{m_{i}\left(z_{*j}-z_{*i}\right)}{\left|z_{*j}-z_{*i}\right|^{2}}-\frac{m_{i}\overline{\left(\zeta_{j}-\zeta_{i}\right)}\left(z_{*j}-z_{*i}\right)^{2}}{\left|z_{*j}-z_{*i} \right|^{4}}+\frac{m_{i}\left(\zeta_{j}-\zeta_{i}\right)}{\left|z_{*j}-z_{*i}\right|^{2}}-\frac{m_{i}\left(\zeta_{j}-\zeta_{i}\right)}{\left|z_{*j}-z_{*i}\right|^{2}}+\bigO{\left(\frac{\left|\zeta_{j}-\zeta_{i}\right|^{2}}{\left|z_{*j}-z_{*i}\right|^{2}}\right)}\nonumber\\
    &=\frac{m_{i}\left(z_{*j}-z_{*i}\right)}{\left|z_{*j}-z_{*i}\right|^{2}}-\frac{m_{i}\overline{\left(\zeta_{j}-\zeta_{i}\right)}\left(z_{*j}-z_{*i}\right)^{2}}{\left|z_{*j}-z_{*i} \right|^{4}}+\bigO{\left(\frac{\left|\zeta_{j}-\zeta_{i}\right|^{2}}{\left|z_{*j}-z_{*i}\right|^{2}}\right)}.\label{ode-linearisation-calculation-2}
\end{align}
Thus our linearised operator takes the form
\begin{align}
    &\frac{d\zeta_{j}}{dt}-\frac{1}{2\pi i}\sum_{i\neq j}\frac{m_{j}\left(z_{*j}-z_{*i}\right)^{2}}{\left|z_{*j}-z_{*i}\right|^{4}}\overline{\left(\zeta_{j}-\zeta_{i}\right)}\nonumber\\
    &=\frac{d\zeta_{j}}{dt}-\frac{1}{2\pi i}\sum_{i\neq j}\frac{m_{j}\left(1+\frac{t}{\tau}\right)^{2i\Lambda\tau}\left(z_{*j}(0)-z_{*i}(0)\right)^{2}}{\left(1+\frac{t}{\tau}\right)\left|z_{*j}(0)-z_{*i}(0)\right|^{4}}\overline{\left(\zeta_{j}-\zeta_{i}\right)}.\label{ode-linearisation-calculation-3}
\end{align}
We wish to further rewrite the above operator. To this end, define $\theta_{ji}$ via the relation
\begin{align}
    z_{*j}(0)-z_{*i}(0)=L_{ji}(0)e^{i\theta_{ji}}, \quad i\neq j,\label{theta-ji-definition}
\end{align}
with $L_{ji}$ defined in \eqref{point-vortex-pairwise-distance-function}. Then \eqref{ode-linearisation-calculation-3} takes the form
\begin{align}
    \frac{d\zeta_{j}}{dt}-\frac{1}{2\pi i}\sum_{i\neq j}\frac{m_{j}e^{i\left(2\left(\Lambda\tau\log{\left(1+\frac{t}{\tau}\right)}+\theta_{ji}\right)\right)}}{L_{ji}(0)^{2}\left(1+\frac{t}{\tau}\right)}\overline{\left(\zeta_{j}-\zeta_{i}\right)}.\label{ode-linearisation-calculation-4}
\end{align}
Now, to be able to write the above operator as a time derivative plus a time-dependent matrix, we define the real and imaginary parts of $\zeta_{j}$ as
\begin{align}
    \zeta_{j}(t)=\zeta^{R}_{j}(t)+i\zeta^{I}_{j}(t).\label{zeta-j-real-imaginary-parts}
\end{align}
We also define the trigonometric functions
\begin{align}
    S_{ji}&=\frac{1}{L_{ji}(0)^{2}}\sin{\left(2\left(\Lambda\tau\log{\left(1+\frac{t}{\tau}\right)}+\theta_{ji}\right)\right)},\nonumber\\
    C_{ji}&=\frac{1}{L_{ji}(0)^{2}}\sin{\left(2\left(\Lambda\tau\log{\left(1+\frac{t}{\tau}\right)}+\theta_{ji}\right)\right)}.\label{ode-linearisation-sin-cos-shorthand}
\end{align}
Note that by \eqref{theta-ji-definition}, $S_{ji}=S_{ij}$ for $i\neq j$, and similarly for $C_{ij}$. Then, for this decomposition, we can write the linearised operator given in \eqref{ode-linearisation-calculation-4} in the following component-wise way
\begin{align}
    \frac{d\zeta^{R}_{j}}{dt}-\sum_{i\neq j}\frac{m_{i}}{2\pi\left(1+\frac{t}{\tau}\right)}\left(\left(\zeta^{R}_{j}-\zeta^{R}_{i}\right)S_{ji}-\left(\zeta^{I}_{j}-\zeta^{I}_{i}\right)C_{ji}\right),\nonumber\\
    \frac{d\zeta^{I}_{j}}{dt}-\sum_{i\neq j}\frac{m_{i}}{2\pi \left(1+\frac{t}{\tau}\right)}\left(\left(\zeta^{R}_{j}-\zeta^{R}_{i}\right)C_{ji}+\left(\zeta^{R}_{j}-\zeta^{R}_{i}\right)S_{ji}\right).\label{ode-linearisation-calculation-5}
\end{align}
Now, let $\bm{\zeta}$ be the column vector defined by
\begin{align*}
    \bm{\zeta}=\begin{pmatrix}
    \zeta^{R}_{1} & \zeta^{I}_{1} & \zeta^{R}_{2} & \zeta ^{I}_{2} & \zeta^{R}_{3} & \zeta^{I}_{3}
    \end{pmatrix}^{\intercal}.
\end{align*}
Thus \eqref{ode-linearisation-calculation-5} can be written as
\begin{align}
    \left(\frac{d}{dt}-\frac{1}{2\pi\left(1+\frac{t}{\tau}\right)}\mathcal{A}\right)\bm{\zeta},\label{ode-linearisation-calculation-6}
\end{align}
where $\mathcal{A}$ is the $6\times 6$ matrix given by
\begin{align}
    \begin{psmallmatrix}
     m_{2}S_{12}+m_{3}S_{13} & -\left(m_{2}C_{12}+m_{3}C_{13}\right) & -m_{2}S_{12} & m_{2}C_{12} & -m_{3}S_{13} & m_{3}C_{13}\\
     -\left(m_{2}C_{12}+m_{3}C_{13}\right) & -\left(m_{2}S_{12}+m_{3}S_{13}\right) & m_{2}C_{12} & m_{2}S_{12} & m_{3}C_{13} & m_{3}S_{13}\\
     -m_{1}S_{12} & m_{1}C_{12} & m_{1}S_{12}+m_{3}C_{23} & -\left(m_{1}C_{12}+m_{3}C_{23}\right) & -m_{3}S_{23} & m_{3}C_{23}\\
     m_{1}C_{12} & m_{1}S_{12} & -\left(m_{1}C_{12}+m_{3}C_{23}\right) & -\left(m_{1}S_{12}+m_{3}S_{23}\right) & m_{3}C_{23} & m_{3}S_{23}\\
     -m_{1}S_{13} & m_{1}C_{13} & -m_{2}S_{23} & m_{2}C_{23} & m_{1}S_{13}+m_{2}S_{23} & -\left(m_{1}C_{13}+m_{2}C_{23}\right)\\
     m_{1}C_{13} & m_{1}S_{13} & m_{2}C_{23} & m_{2}S_{23} & -\left(m_{1}C_{13}+m_{2}S_{23}\right) & -\left(m_{1}S_{13}+m_{2}S_{23}\right)
    \end{psmallmatrix}.\label{matrix-A-definition}
\end{align}
We now note that, for $k=i$ or $k=j$, we have
\begin{align}
    \int_{0}^{t}\frac{m_{k}S_{ji}(s)}{2\pi\left(1+\frac{s}{\tau}\right)}ds&=\frac{m_{k}}{4\pi L_{ji}(0)^{2}\Lambda}\int_{0}^{t}\frac{2\Lambda}{\left(1+\frac{s}{\tau}\right)}\sin{\left(\left(2\left(\Lambda\tau\log{\left(1+\frac{s}{\tau}\right)}+\theta_{ji}\right)\right)\right)}ds\nonumber\\
    &=\frac{m_{k}\left[\cos{2\theta_{ji}}-\cos{\left(\left(2\left(\Lambda\tau\log{\left(1+\frac{t}{\tau}\right)}+\theta_{ji}\right)\right)\right)}\right]}{4\pi L_{ji}(0)^{2}\omega},\label{form-of-entries-of-matrix-B-1}
\end{align}
and analogously for integrals of the form \eqref{form-of-entries-of-matrix-B-1} but with $S_{ji}$ replaced with $C_{ji}$. Thus we have
\begin{align*}
    -\frac{1}{2\pi\left(1+\frac{t}{\tau}\right)}\mathcal{A}=\frac{d\mathcal{B}}{dt},
\end{align*}
where the entries of $\mathcal{B}$ can be found using \eqref{matrix-A-definition}--\eqref{form-of-entries-of-matrix-B-1}, and have the form
\begin{align}
    \frac{m_{k}}{4\pi L_{ji}(0)^{2}\Lambda}\left(\pm B_{1}\pm B_{2}\right),\label{form-of-entries-of-matrix-B-2}
\end{align}
where each $B_{k}$ takes the form
\begin{align}
    B_{k}=\begin{cases}
        0\\
        \sin{2\theta_{ji}}-\sin{\left(\left(2\left(\omega\tau\log{\left(1+\frac{t}{\tau}\right)}+\theta_{ji}\right)\right)\right)}\\
        \cos{2\theta_{ji}}-\cos{\left(\left(2\left(\omega\tau\log{\left(1+\frac{t}{\tau}\right)}+\theta_{ji}\right)\right)\right)},
    \end{cases}\label{form-of-entries-of-matrix-B-3}
\end{align}
some $i,j=1,2,3$, with $j\neq i$.

\medskip

Thus we can write \eqref{ode-linearisation-calculation-6} as
\begin{align}
    e^{-\mathcal{B}}\frac{d}{dt}\left(e^{\mathcal{B}}\bm{\zeta}\right),\label{ode-linearisation-calculation-7}
\end{align}
and it is left to calculate the behaviour of $e^{\pm \mathcal{B}}$ as functions of time. Note that for $p,q=1,\dots,6$
\begin{align*}
    \left[e^{\mathcal{B}}\right]_{pq}=\sum_{n=0}^{\infty}\frac{\left[\left(\mathcal{B}\right)^{n}\right]_{pq}}{n!}
\end{align*}
Now, each entry in $\mathcal{B}^{n}$ is a sum of $6^{n-1}\cdot 2^{n}$ terms, with each term a product of the form
\begin{align}
    \frac{m_{l_{1}}\dots m_{l_{n}}B_{k_{1}}\dots B_{k_{n}}}{\left(4\pi\right)^{n}\Lambda^{n}L_{i_{1}j_{1}}(0)^{2}\dots L_{i_{n}j_{n}}(0)^{2}},\label{bounds-on-B-entries-1}
\end{align}
where $m_{l_{1}},\dots,m_{l_{n}}\in\left\{m_{1},m_{2},m_{3}\right\}$, $B_{k_{1}},\dots,B_{k_{n}}$ are of the form \eqref{form-of-entries-of-matrix-B-3}, and $L_{i_{1}j_{1}}(0),\dots,L_{i_{n}j_{n}}(0)\in\left\{L_{12}(0),L_{13}(0),L_{23}(0)\right\}$.

\medskip
Recalling the mass constraints \eqref{mass-constraint}, the form of $\Lambda$,
\begin{align*}
    \Lambda=\frac{\left(m_{1}+m_{2}\right)^{3}+m_{1}^{3}\left(1+\Pi^{-1}\right)+m_{2}^{3}\left(1+\Pi\right)}{4\pi\left(m_{1}+m_{2}\right)^{2}L_{12}(0)^{2}},
\end{align*}
and that we made the choice in Section \ref{modified-vortices} that $L_{23}(t)>L_{12}(t)>L_{13}(t)$ for all $t\geq0$, we have
\begin{align}
    \left|\frac{m_{l_{1}}\dots m_{l_{n}}B_{k_{1}}\dots B_{k_{n}}}{\left(4\pi\right)^{n}\Lambda^{n}L_{i_{1}j_{1}}(0)^{2}\dots L_{i_{n}j_{n}}(0)^{2}}\right|\leq \frac{2^{n}L_{12}(0)^{2n}}{L_{13}(0)^{2n}}.\label{bounds-on-B-entries-2}
\end{align}
Thus for all $p,q=1,\dots,6$, we have
\begin{align}
    \left|\left[\left(\mathcal{B}\right)^{n}\right]_{pq}\right|\leq\frac{1}{6}\left(\frac{24L_{12}(0)^{2}}{L_{13}(0)^{2}}\right)^{n},\label{bounds-on-B-entries-3}
\end{align}
which finally implies 
\begin{align}
    \left|\left[e^{\pm \mathcal{B}}\right]_{pq}\right|\leq \frac{1}{6}e^{\frac{24L_{12}(0)^{2}}{L_{13}(0)^{2}}},\label{bounds-on-B-entries-4}
\end{align}
since a very similar argument works for $e^{-\mathcal{B}}$.
\section{First Approximation on $\left[T_{0},T\right]$}\label{first-approximation-section}
To first build a solution to the Euler equations \eqref{2d-euler-vorticity-stream} that approximate the point vortex trajectories given in Section \ref{modified-vortices} in the sense of \eqref{vortices-to-deltas}, we must first construct a good enough first approximation on a finite time interval $\left[T_{0},T\right]$ defined in \eqref{T0-definition}--\eqref{T-definition-2}. As discussed in Section~\ref{scheme},   $T_{0}$ will be some absolute constant large enough to ensure sufficient separation of the three vortices, and $T$ is an arbitrarily large parameter that we will take $T\to\infty$ to construct our full solution on $\left[T_{0},\infty\right)$. To start, we wish to  calculate the size of 
\begin{align}
    E_{1}\left[\omega_{*},\Psi_{*}\right]&=\dell_{t}\omega_{*}(x,t)+\gradperp_{x}\Psi_{*}(x,t)\cdot\nabla_{x}\omega_{*}(x,t),\label{first-approximation-error-calculation-1a}\\
    E_{2}\left[\omega_{*},\Psi_{*}\right]&=\Delta_{x}\Psi_{*}(x,t)+\omega_{*}(x,t).\label{first-error-approximation-calculation-1b}
\end{align}
for well chosen $\omega_{*}$ and $\Psi_{*}$. First, for $j=1,2,3$, let $\xi_{j}(t)$ be as in \eqref{point-vortex-trajectory-function-decomposition-new}, that is
\begin{align}
    \xi_{j}(t)=\xi_{*j}(t)+\tilde{\xi}_{j}(t),\label{point-vortex-trajectory-function-decomposition}
\end{align}
with $\xi_{*j}$ constructed in Section \ref{modified-vortices}, and with both terms in the decomposition satisfying the bounds \eqref{xi0-bound}--\eqref{tildexi-bound}. 

\medskip
From the bounds on the $\xi_{j}(t)$ in \eqref{xi0-bound}--\eqref{tildexi-bound}, we also have the following estimates on the linearisation of certain functions of $\xi_{j}$ around $\xi_{*j}$ that we will use repeatedly.
\begin{lemma}\label{p-q-rational-function-linearisation-lemma}
    Let $n\in\mathbb{Z}_{\geq1}$. Then, on $\left[T_{0},T\right]$, for $l_{1},\dots,l_{n}\in\left\{1,2\right\}$, we have the bounds
    \begin{align*}
        \left|\frac{\left(\xi_{j}^{l_{1}}-\xi_{i}^{l_{1}}\right)\dots\left(\xi_{j}^{l_{n}}-\xi_{i}^{l_{n}}\right)}{\left|\xi_{j}-\xi_{i}\right|^{2n}}-\frac{\left(\xi_{*j}^{l_{1}}-\xi_{*i}^{l_{1}}\right)\dots\left(\xi_{*j}^{l_{n}}-\xi_{*i}^{l_{n}}\right)}{\left|\xi_{*j}-\xi_{*i}\right|^{2n}}\right|\leq \frac{C\varepsilon^{4-\sigma}}{t^{\frac{n}{2}+2}},
    \end{align*}
    for $i,j=1,2,3$, $j\neq i$, for $\sigma>0$ arbitrarily small.
\end{lemma}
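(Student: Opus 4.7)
The plan is to set $a(t) = \xi_{*j}(t) - \xi_{*i}(t)$ and $b(t) = \tilde{\xi}_j(t) - \tilde{\xi}_i(t)$, so that $\xi_j - \xi_i = a + b$, and then treat the difference in the lemma as the increment $F(a+b) - F(a)$ of the smooth, homogeneous-of-degree-$(-n)$ function
\[ F(v) = \frac{v^{l_1} v^{l_2} \cdots v^{l_n}}{|v|^{2n}}, \qquad v \in \R^2\setminus\{0\}. \]
A mean value estimate will then give the bound once we check that the straight segment from $a$ to $a+b$ stays away from the origin.

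First I would record the two elementary size estimates. By \eqref{point-vortex-pairwise-distance-function},
\[ |a(t)| = L_{ij}(0)\left(1+\tfrac{t}{\tau}\right)^{\frac{1}{2}} \geq c\, t^{\frac{1}{2}}, \qquad t \in [T_0,T], \]
for a constant $c>0$ depending only on the initial configuration and $\tau$; and by \eqref{tildexi-bound},
\[ |b(t)| \leq |\tilde{\xi}_j(t)| + |\tilde{\xi}_i(t)| \leq 2\,\varepsilon^{4-\sigma}\, t^{-\frac{3}{2}}. \]
Combining, $|b|/|a| \leq C \varepsilon^{4-\sigma}/t^2$, which is less than $\tfrac12$ for $\ve$ small and $T_0$ large. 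In particular $|a + s b| \geq |a|/2 \geq (c/2)\,t^{1/2}$ for all $s \in [0,1]$.

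Next, since $F$ is homogeneous of degree $-n$, its gradient is homogeneous of degree $-(n+1)$, and a direct computation yields the pointwise bound
\[ |DF(v)| \leq \frac{C_n}{|v|^{n+1}}, \qquad v \neq 0, \]
where $C_n$ depends only on $n$. The fundamental theorem of calculus along the segment $\{a + s b : s \in [0,1]\}$, which avoids the origin by the previous step, gives
\[ |F(a+b) - F(a)| \leq |b|\,\sup_{s \in [0,1]} |DF(a + s b)| \leq \frac{2^{n+1} C_n\, |b|}{|a|^{n+1}}. \]
Substituting the bounds on $|a|$ and $|b|$ yields
\[ |F(a+b) - F(a)| \leq \frac{C\, \varepsilon^{4-\sigma}/t^{\frac{3}{2}}}{t^{\frac{n+1}{2}}} = \frac{C\,\varepsilon^{4-\sigma}}{t^{\frac{n}{2}+2}}, \]
which is exactly the claimed estimate.

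There is no real obstacle here: the argument is a one-variable Taylor estimate for a homogeneous rational function, and the only verification needed is the smallness $|b| \leq |a|/2$, which follows from enlarging $T_0$ (as already allowed in the statement of Theorem \ref{teo1}) and taking $\varepsilon$ small. The same template will clearly extend to higher-order expansions and to rational combinations arising in later error computations.
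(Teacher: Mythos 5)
Your proof is correct: the mean-value estimate for the degree-$(-n)$ homogeneous function $F$, combined with $|a|\gtrsim t^{1/2}$ from \eqref{point-vortex-pairwise-distance-function} and $|b|\lesssim \varepsilon^{4-\sigma}t^{-3/2}$ from \eqref{tildexi-bound}, gives exactly the stated power $t^{-(n/2+2)}$. The paper states this lemma without proof, and your argument is precisely the linearisation the authors indicate they are using, so there is nothing to add.
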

From \eqref{gamma-j-u-j-definition}, we have
$$
   -\Delta_{x}\Gamma_{j}=\varepsilon^{-2}U_{j},\ \ \ j=1,2,3.
   $$
Next, define cutoffs by $\eta_{j}$ defined \eqref{cutoffs-definition-1}. We also recall the choice of $K$ in Section \ref{cs} given by
\begin{align}
    K=L_{13}(0),\label{K-definition}
\end{align}
and for the rest of the main body of the paper until Section \ref{conclusion}, we also consider the full dynamic problem \eqref{first-approximation-error-calculation-1a}--\eqref{first-error-approximation-calculation-1b} from times $t\geq T_{0}$, where, as in Theorem \eqref{K-definition-new}, $T_{0}$ is some absolute constant large enough to ensure
\begin{align}
    L_{ij}\left(T_0\right)\geq2L_{23}(0), \quad i,j=1,2,3, \quad i<j.\label{T0-definition}
\end{align}
Then, due to \eqref{xi0-bound}--\eqref{tildexi-bound}, for small enough $\varepsilon>0$,
\begin{align*}
    \left|\xi_{i}(T_{0})-\xi_{j}(T_{0})\right|\geq\frac{3}{2}L_{23}(0), \quad i,j=1,2,3, \quad i<j.
\end{align*}
As per the strategy laid out in Section \ref{scheme}, we will first construct a sequence of solutions on time intervals $\left[T_{0},T\right]$ for each $T>0$ large enough, from which we will extract a solution to \eqref{2d-euler-vorticity-stream} on $\left[T_{0},\infty\right)$. Accordingly, following \eqref{T-definition}, we also fix $T>0$, a large time satisfying
\begin{align}
    T\gg T_{0}.\label{T-definition-2}
\end{align}
\begin{remark}\label{choice-of-T0-remark}
    We remark here that as long we define $T_{0}$ in a manner independent of $\varepsilon$, we can enlarge it as we like as long as it satisfies \eqref{T0-definition}. Indeed, we will do so in order to close a crucial a priori estimate in Lemma \ref{L2-interior-a-priori-estimate-lemma}.
\end{remark}
Now we define $\psi_{j}$ and $\phi_{j}$ of the form
\begin{align*}
    \psi_{j}&=\psi_{j}\left(\frac{x-\xi_{j}(t)}{\varepsilon},t\right),\\
    \phi_{j}&=\phi_{j}\left(\frac{x-\xi_{j}(t)}{\varepsilon},t\right),
\end{align*}
such that for $j=1,2,3$, and $w\in\mathbb{R}^{2}$
\begin{align}
    \Delta_{w}\psi_{j}\left(w,t\right)+\phi_{j}\left(w,t\right)=0\label{psi-j-phi-j-equation}
\end{align}
on $\mathbb{R}^{2}$, noting that $\phi_{j}\left(w,t\right)$ has support contained in a ball radius $1$ around $0$.

\medskip
We set
\begin{equation} \label{psi-in-definition}
    \psi^{in}=\left(\psi_{1},\psi_{2},\psi_{3}\right),\quad
    \phi^{in}=\left(\phi_{1},\phi_{2},\phi_{3}\right).
\end{equation}
Given these definitions, we then set
\begin{align}
    \Psi_{*}&=\Psi_{0}+\sum_{j=1}^{3}\kappa_{j}\eta_{j}\psi_{j}+\psi^{out},\label{psi-star-definition}\\
    \omega_{*}&=\omega_{0}+\varepsilon^{-2}\sum_{j=1}^{3}\kappa_{j}\phi_{j},\label{omega-star-definition}
\end{align}
for some $\psi^{out}=\psi^{out}(x,t)$. Recalling the definition \eqref{point-vortex-decomposition-vector-form}, from \eqref{psi-star-definition}--\eqref{omega-star-definition}, we can see that
\begin{align}
    E_{1}\left[\omega_{*},\Psi_{*}\right]&=\varepsilon^{-4}\sum_{j=1}^{3}\kappa_{j}E_{j1}\left(\phi_{j},\psi_{j},\psi^{out};\xi\right),\label{first-approximation-inner-error}
\end{align}
and
$$
    E_{2}\left[\omega_{*},\Psi_{*}\right]=E_{2}^{out}\left(\psi^{in},\psi^{out};\xi\right),
    $$
where,
\begin{align}
    E_{j1}&=\varepsilon^{2}\dell_{t}\left(U_{j}+\phi_{j}\right)+\varepsilon^{2}\gradperp_{x}\left(\Psi_{0}+\kappa_{j}\psi_{j}+\psi^{out}\right)\cdot\nabla_{x}\left(U+\phi_{j}\right),\label{first-approximation-inner-error-ER1-definition}
\end{align}
\begin{align}
    E_{2}^{out}=\Delta_{x}\psi^{out}+\sum_{j=1}^{3}\kappa_{j}\left[\psi_{j}\Delta_{x}\eta_{j}+2\nabla_{x}\eta_{j}\cdot\nabla_{x}\psi_{j}\right].\label{first-approximation-outer-error-E2out-definition}
\end{align}
We note that the specific forms of the $E_{j1}$, and $E_{2}^{out}$ are due to the fact that $\phi_{j}$ will have the same support as $U_{j}$, and so due to the choice of $K$ and $T_{0}$ in \eqref{K-definition}--\eqref{T0-definition}, $\eta_{j}\equiv1$ on $\supp{\phi_{j}}$ for $j=1,2,3$, and $\eta_{j}\equiv0$ on $\supp{\phi_{j+1}}$ and $\supp{\phi_{j+2}}$, where again, the addition in the subscript is taken modulo 3.

\medskip
Let
\begin{align}
    y=\frac{x-\xi_{j}(t)}{\varepsilon}.\label{change-of-coordinates-dynamic-problem}
\end{align}
In these coordinates we have
\begin{align}
    \Psi_{0}&=\kappa_{j}\Gamma\left(\frac{x-\xi_{j}(t)}{\varepsilon}\right)+\sum_{i\neq j}\kappa_{i}\Gamma\left(\frac{x-\xi_{i}(t)}{\varepsilon}\right)\nonumber\\
    &=\kappa_{j}\Gamma\left(y\right)+\sum_{i\neq j}\kappa_{i}\Gamma\left(y+\frac{\xi_{j}(t)-\xi_{i}(t)}{\varepsilon}\right).\label{Psi-0-y-coordinates}
\end{align}
Then, using \eqref{power-nonlinearity-vorticity-definition}--\eqref{gamma-j-u-j-definition}, \eqref{psi-j-phi-j-equation}, and applying the chain rule, we have the following lemma.
\begin{lemma}\label{ER1-lemma}
    With change of coordinates given by \eqref{change-of-coordinates-dynamic-problem}, we can write $E_{j1}$, defined in \eqref{first-approximation-inner-error-ER1-definition}, as
    \begin{align}
        &E_{j1}=\varepsilon^{2}\dell_{t}\phi_{j}+\kappa_{j}\gradperp_{y}\psi_{j}\cdot\grad_{y}\phi_{j}-\kappa_{j}\gradperp_{y}\Gamma\cdot\grad_{y}\left(\Delta_{y}\psi_{j}+\gamma\Gamma^{\gamma-1}_{+}\psi_{j}\right)\nonumber\\
        &+\left(\gradperp_{y}\left(\sum_{i\neq j}\kappa_{i}\Gamma\left(y+\frac{\xi_{j}(t)-\xi_{i}(t)}{\varepsilon}\right)\right)-\varepsilon\dot{\xi}_{j}\right)\cdot\grad_{y}\left(U+\phi_{j}\right)+\gradperp_{y}\psi^{out}\cdot\grad_{y}\left(U+\phi_{j}\right).\label{ER1-lemma-statement-1}
    \end{align}
\end{lemma}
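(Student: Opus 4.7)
The plan is a direct computation: substitute the change of coordinates $y = \ve^{-1}(x-\xi_j(t))$ together with the decomposition \eqref{Psi-0-y-coordinates} of $\Psi_{0}$ into the definition \eqref{first-approximation-inner-error-ER1-definition} of $E_{j1}$, and then reduce using \eqref{psi-j-phi-j-equation} and two elementary orthogonality identities for the perpendicular gradient. The statement is purely algebraic, so there is no analytic difficulty; the job is simply to bookkeep.

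First I would apply the chain rule: in the moving frame $y=\ve^{-1}(x-\xi_j(t))$, one has $\nabla_{x}=\ve^{-1}\ygrad$, and the Eulerian time derivative becomes the derivative at fixed $y$ minus $\ve^{-1}\dot{\xi}_j\cdot\ygrad$. Since $U_j(x,t)=U(y)$ depends on $t$ only through $y$, this converts the first summand in \eqref{first-approximation-inner-error-ER1-definition} into $\ve^{2}\dell_{t}\phi_{j}-\ve\,\dot{\xi}_{j}\cdot\ygrad(U+\phi_{j})$, where $\dell_{t}\phi_{j}$ now denotes the time derivative at fixed $y$. For the transport term, the factor $\ve^{2}$ exactly absorbs the $\ve^{-2}$ produced by $\gradperp_{x}\cdot\nabla_{x}=\ve^{-2}\ygradperp\cdot\ygrad$, giving $\ygradperp(\Psi_{0}+\kappa_{j}\psi_{j}+\psi^{out})\cdot\ygrad(U+\phi_{j})$ with $\Psi_{0}$ read in $y$ coordinates via \eqref{Psi-0-y-coordinates}.

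Next I would split the transport term according to which summand of $\Psi_{0}+\kappa_{j}\psi_{j}+\psi^{out}$ contributes. The $i\neq j$ pieces and the $\psi^{out}$ piece already appear in the target formula \eqref{ER1-lemma-statement-1}; grouping the former with the $-\ve\dot{\xi}_{j}\cdot\ygrad(U+\phi_{j})$ term from the chain rule produces the bracketed expression in the statement. The two remaining inner pieces, $\kappa_{j}\ygradperp\Gamma\cdot\ygrad(U+\phi_{j})$ and $\kappa_{j}\ygradperp\psi_{j}\cdot\ygrad(U+\phi_{j})$, are where the two key orthogonality identities enter: (i) $\ygradperp\Gamma\cdot\ygrad U=0$, because $\ygrad U=\gamma\Gamma^{\gamma-1}_{+}\ygrad\Gamma$ is parallel to $\ygrad\Gamma$; and (ii) $\ygradperp\psi_{j}\cdot\ygrad U=-\ygradperp\Gamma\cdot\ygrad(\gamma\Gamma^{\gamma-1}_{+}\psi_{j})$, which follows from the antisymmetry $a^{\perp}\cdot b=-b^{\perp}\cdot a$, the product rule, and a second application of (i) to kill the cross term $\psi_{j}\Gamma^{\gamma-2}_{+}\ygradperp\Gamma\cdot\ygrad\Gamma$.

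Finally, substituting $\phi_{j}=-\Delta_{y}\psi_{j}$ from \eqref{psi-j-phi-j-equation} into the surviving $\kappa_{j}\ygradperp\Gamma\cdot\ygrad\phi_{j}$ piece yields $-\kappa_{j}\ygradperp\Gamma\cdot\ygrad(\Delta_{y}\psi_{j})$, which combines with the contribution produced by (ii) into precisely
\begin{align*}
-\kappa_{j}\ygradperp\Gamma\cdot\ygrad\bigl(\Delta_{y}\psi_{j}+\gamma\Gamma^{\gamma-1}_{+}\psi_{j}\bigr),
\end{align*}
while the remaining $\kappa_{j}\ygradperp\psi_{j}\cdot\ygrad\phi_{j}$ piece lands verbatim. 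Collecting everything reproduces \eqref{ER1-lemma-statement-1}. The only step requiring care is identity (ii), where one must track signs in $a^{\perp}\cdot b=-b^{\perp}\cdot a$ and exploit (i) to eliminate the cross term; once this is in hand, the rest is cancellation of $\ve^{\pm 2}$ factors and regrouping.
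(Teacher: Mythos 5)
Your computation is correct and is exactly the calculation the paper has in mind (the paper omits the details, simply citing the chain rule, \eqref{psi-j-phi-j-equation}, and \eqref{power-nonlinearity-vorticity-definition}--\eqref{gamma-j-u-j-definition}). The two orthogonality identities you isolate — $\ygradperp\Gamma\cdot\ygrad U=0$ and $\ygradperp\psi_{j}\cdot\ygrad U=-\ygradperp\Gamma\cdot\ygrad(\gamma\Gamma^{\gamma-1}_{+}\psi_{j})$ — together with $\phi_{j}=-\Delta_{y}\psi_{j}$ are precisely what produce the linearised-operator term, and your sign bookkeeping is consistent with the convention $z^{\perp}=(z_{2},-z_{1})$.
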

Now we calculate how the quantity
\begin{align*}
    \gradperp_{y}\left(\sum_{i\neq j}\kappa_{i}\Gamma\left(y+\frac{\xi_{j}(t)-\xi_{i}(t)}{\varepsilon}\right)\right)-\varepsilon\dot{\xi}_{j}
\end{align*}
behaves on $B_{1}(0)$ in the $y$ coordinates defined in \eqref{change-of-coordinates-dynamic-problem}. Define, for $i,l\in\left\{1,2,3\right\}$ and suitable functions $h_{i},k_{l}:I\to\mathbb{R}^{2}$, for some time interval $I$, the nonlinear functionals
\begin{align}
    \mathcal{N}_{j}\left(h_{1},h_{2},h_{3}\right)\left[k_{1},k_{2},k_{3}\right]=\sum_{i\neq j}\frac{m_{i}}{2\pi}\frac{\left(h_{j}+k_{j}-h_{i}-k_{i}\right)^{\perp}}{\left|h_{j}+k_{j}-h_{i}-k_{i}\right|^{2}}-\sum_{i\neq j}\frac{m_{i}}{2\pi}\frac{\left(h_{j}-h_{i}\right)^{\perp}}{\left|h_{j}-h_{i}\right|^{2}},\label{point-vortex-nonlinear-functionals}
\end{align}
for $j\in\left\{1,2,3\right\}$.
\begin{lemma}\label{remainder-modal-calculation-lemma}
    For all $\varepsilon>0$ small enough, and $t\geq T_{0}$ as in \eqref{T0-definition}, for $y\in B_{1}(0)$, we have
    \begin{align}
        &\gradperp_{y}\left(\sum_{i\neq j}\kappa_{i}\Gamma\left(y+\frac{\xi_{j}(t)-\xi_{i}(t)}{\varepsilon}\right)\right)-\varepsilon\dot{\xi}_{j}=-\varepsilon\left(\dot{\tilde{\xi}}_{j}+\mathcal{N}_{j}\left(\xi_{*}\right)\left[\tilde{\xi}\right]\right)\nonumber\\
        &+\gradperp_{y}\left(-\sum_{i\neq j}\frac{m_{i}}{4\pi}\left(\mathcal{E}^{\left(ji\right)}_{2}(y,\xi)+\mathcal{E}^{\left(ji\right)}_{3}(y,\xi)+\mathcal{E}^{\left(ji\right)}_{4}(y,\xi)+\mathcal{R}^{\left(ji\right)}_{5}\left(y,\xi\right)\right)\right),\label{remainder-modal-calculation-statement}
    \end{align}
    where for each $j\in\left\{1,2,3\right\}$ and each $i\in\left\{1,2,3\right\}$ such that $i\neq j$, we have that, for $l=2,3,4$, the $\mathcal{E}^{\left(ji\right)}_{l}\left(y,\xi\right)$ are a sum of mode $l$ terms in the $y$ coordinate system, and have size $\bigO{\left(\varepsilon^{l}\left|y\right|^{l}\left|\xi_{j}-\xi_{i}\right|^{-l}\right)}=\bigO{\left(\varepsilon^{l}t^{-\frac{l}{2}}\right)}$, and $\mathcal{R}^{\left(ji\right)}_{5}=\bigO{\left(\varepsilon^{5}\left|y\right|^{5}\left|\xi_{j}-\xi_{i}\right|^{-5}\right)}=\bigO{\left(\varepsilon^{5}t^{-\frac{5}{2}}\right)}$.
\end{lemma}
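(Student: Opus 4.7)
The statement is essentially a Taylor expansion of the logarithmic tail of $\Gamma$, combined with the Kirchhoff--Routh cancellation for $\xi_{*j}$. First, for $y\in B_1(0)$ and $t\geq T_0$, the bound $|\xi_j(t)-\xi_i(t)|\geq \tfrac{1}{2}L_{ji}(t)$ (which follows from \eqref{point-vortex-pairwise-distance-function}, \eqref{T0-definition}, and the smallness in \eqref{tildexi-bound}) gives $|y+(\xi_j-\xi_i)/\varepsilon|>1$ for $\varepsilon$ small, placing us in the logarithmic regime of \eqref{Gamma-piecewise-definition}: $\Gamma(y+(\xi_j-\xi_i)/\varepsilon)=\nu'(1)\log|y+(\xi_j-\xi_i)/\varepsilon|$. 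Using $\kappa_i\nu'(1)=-m_i/(2\pi)$ (from \eqref{kappa-j-definition} and \eqref{mass-in-terms-of-nu}), the sum inside the first $\gradperp_y$ rewrites as $-\sum_{i\neq j}\frac{m_i}{2\pi}\log|y+(\xi_j-\xi_i)/\varepsilon|$.

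Next, identifying $\mathbb{R}^2$ with $\mathbb{C}$ and setting $b_{ji}:=\xi_j-\xi_i$, I would use the convergent Taylor expansion
\begin{align*}
\log|y+b_{ji}/\varepsilon|=\log|b_{ji}/\varepsilon|+\tfrac{1}{2}\sum_{l=1}^{\infty}\frac{(-1)^{l+1}}{l}\left[(\varepsilon y/b_{ji})^{l}+(\varepsilon\bar{y}/\bar{b}_{ji})^{l}\right]
\end{align*}
valid for $|y|\leq1$ and $\varepsilon$ small. The $\log|b_{ji}/\varepsilon|$ piece is killed by $\gradperp_y$. For each $l\geq 1$, the summand equals $2(\varepsilon r/|b_{ji}|)^l\cos\!\left(l(\theta-\alpha_{ji})\right)$, with $b_{ji}=|b_{ji}|e^{i\alpha_{ji}}$; this is a pure mode $l$ function in the angle of $y$ with magnitude $\bigO(\varepsilon^l|y|^l|b_{ji}|^{-l})=\bigO(\varepsilon^l t^{-l/2})$ by \eqref{point-vortex-pairwise-distance-function}. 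I would define $\mathcal{E}_l^{(ji)}$ as the $l$-th summand for $l=2,3,4$ and $\mathcal{R}_5^{(ji)}$ as the geometric tail $\sum_{l\geq 5}$; the factor $\frac{m_i}{4\pi}$ in \eqref{remainder-modal-calculation-statement} (versus $\frac{m_i}{2\pi}$ from the log coefficient) is absorbed by the $\tfrac{1}{2}$ in the expansion.

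The mode $1$ contribution is where the essential cancellation occurs. A direct differentiation of $\text{Re}(\varepsilon y/b_{ji})=\varepsilon(y_1 b_{ji,1}+y_2 b_{ji,2})/|b_{ji}|^2$ yields $\gradperp_y\,\text{Re}(\varepsilon y/b_{ji})=\varepsilon(\xi_j-\xi_i)^{\perp}/|\xi_j-\xi_i|^2$, so the mode $1$ part of $\gradperp_y$ of the full log sum equals $-\varepsilon\sum_{i\neq j}\frac{m_i}{2\pi}(\xi_j-\xi_i)^{\perp}/|\xi_j-\xi_i|^2$. Subtracting $\varepsilon\dot{\xi}_j=\varepsilon\dot{\xi}_{*j}+\varepsilon\dot{\tilde{\xi}}_j$ and substituting the Kirchhoff--Routh identity $\dot{\xi}_{*j}=-\sum_{i\neq j}\frac{m_i}{2\pi}(\xi_{*j}-\xi_{*i})^{\perp}/|\xi_{*j}-\xi_{*i}|^2$ from \eqref{KR}, the $\xi_*$ contributions recombine to give exactly $-\varepsilon\mathcal{N}_j(\xi_*)[\tilde{\xi}]$ by the definition \eqref{point-vortex-nonlinear-functionals}, leaving $-\varepsilon\dot{\tilde{\xi}}_j$ as the remainder. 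Adding back the $\gradperp_y$ of the higher-mode terms identified above reproduces \eqref{remainder-modal-calculation-statement}.

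I do not anticipate any serious obstacle; the proof is a careful bookkeeping exercise. The one subtle point is to use \eqref{tildexi-bound} to guarantee that the full trajectories $\xi_j$ inherit the separation rate $L_{ji}(t)\sim t^{1/2}$ of the $\xi_{*j}$ uniformly on $[T_0,T]$, which is what simultaneously ensures convergence of the Taylor series and produces the advertised $t^{-l/2}$ decay rates.
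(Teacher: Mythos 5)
Your proposal is correct and follows essentially the same route as the paper: Taylor-expand the logarithmic far field of $\Gamma$ on $B_1(0)$, identify the mode-$l$ terms of size $\bigO(\varepsilon^l t^{-l/2})$, and use the Kirchhoff--Routh equation for $\xi_{*}$ to convert the mode-$1$ contribution minus $\varepsilon\dot{\xi}_j$ into $-\varepsilon\bigl(\dot{\tilde{\xi}}_j+\mathcal{N}_j(\xi_*)[\tilde{\xi}]\bigr)$. The only difference is organizational: you expand $\mathrm{Re}\log(1+\varepsilon y/b_{ji})$ in complex form, which yields the modal decomposition automatically, whereas the paper expands the real logarithm $\log\bigl(1+2\varepsilon y\cdot(\xi_j-\xi_i)/|\xi_j-\xi_i|^2+\varepsilon^2|y|^2/|\xi_j-\xi_i|^2\bigr)$ and collects the explicit polynomial expressions \eqref{remainder-modal-calculation-mode-2}--\eqref{remainder-modal-calculation-mode-4} by hand.
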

\begin{proof}
     First we note that on $B_{1}(0)$, by shrinking $\varepsilon$ if necessary we have
    \begin{align}
        \gradperp_{y}\left(\sum_{i\neq j}\kappa_{i}\Gamma\left(y+\frac{\xi_{j}(t)-\xi_{i}(t)}{\varepsilon}\right)\right)=\gradperp_{y}\left(-\sum_{i\neq j}\frac{m_{i}}{4\pi}\log{\left(1+\frac{\varepsilon y\cdot\left(\xi_{j}-\xi_{i}\right)}{\left|\xi_{j}-\xi_{i}\right|^{2}}+\frac{\varepsilon^{2}\left|y\right|^{2}}{\left|\xi_{j}-\xi_{i}\right|^{2}}\right)}\right).\label{remainder-modal-calculation-1}
    \end{align}
    In the above, we have used \eqref{vortex-mass-definition}, \eqref{Gamma-piecewise-definition}, \eqref{mass-in-terms-of-nu}, and \eqref{kappa-j-definition}.

    \medskip
    
    We can expand both of the summands inside the perpendicular gradient above as
    \begin{align}
        &-\frac{m_{i}}{4\pi}\left(\frac{2\varepsilon y\cdot\left(\xi_{j}-\xi_{i}\right)}{\left|\xi_{j}-\xi_{i}\right|^{2}}+\frac{\varepsilon^{2}\left|y\right|^{2}}{\left|\xi_{j}-\xi_{i}\right|^{2}}-\frac{2\varepsilon^{2}\left(y\cdot\left(\xi_{j}-\xi_{i}\right)\right)^{2}}{\left|\xi_{j}-\xi_{i}\right|^{4}}\right)\nonumber\\
        &-\frac{m_{i}}{4\pi}\left(\frac{8\varepsilon^{3}\left(y\cdot\left(\xi_{j}-\xi_{i}\right)\right)^{3}}{3\left|\xi_{j}-\xi_{i}\right|^{6}}-\frac{2\varepsilon^{3}\left|y\right|^{2}y\cdot\left(\xi_{j}-\xi_{i}\right)}{\left|\xi_{j}-\xi_{i}\right|^{4}}\right)\nonumber\\
        &-\frac{m_{i}}{4\pi}\left(\frac{4\varepsilon^{4}\left|y\right|^{2}\left(y\cdot\left(\xi_{j}-\xi_{i}\right)\right)^{2}}{\left|\xi_{j}-\xi_{i}\right|^{6}}-\frac{4\varepsilon^{4}\left(y\cdot\left(\xi_{j}-\xi_{i}\right)\right)^{4}}{\left|\xi_{j}-\xi_{i}\right|^{8}}-\frac{\varepsilon^{4}\left|y\right|^{4}}{\left|\xi_{j}-\xi_{i}\right|^{4}}\right)+\mathcal{R}^{\left(ji\right)}_{5},\label{remainder-modal-calculation-1a}
    \end{align}
    where $\mathcal{R}^{\left(ji\right)}_{5}=\bigO{\left(\varepsilon^{5}\left|y\right|^{5}\left|\xi_{j}-\xi_{i}\right|^{-5}\right)}=\bigO{\left(\varepsilon^{5}t^{-\frac{5}{2}}\right)}$. 
    
    \medskip
    Now let
    \begin{align}
        \mathcal{E}^{\left(ji\right)}_{2}(y,\xi)&=\frac{\varepsilon^{2}}{\left|\xi_{j}-\xi_{i}\right|^{4}}\left(\left(\xi_{j}^{1}-\xi_{i}^{1}\right)^{2}-\left(\xi_{j}^{2}-\xi_{i}^{2}\right)^{2}\right)\left(y_{1}^{2}-y_{2}^{2}\right)\nonumber\\
        &-\frac{4\varepsilon^{2}}{\left|\xi_{j}-\xi_{i}\right|^{4}}\left(\xi_{j}^{1}-\xi_{i}^{1}\right)\left(\xi_{j}^{2}-\xi_{i}^{2}\right)y_{1}y_{2},\label{remainder-modal-calculation-mode-2}
    \end{align}
    \begin{align}
        \mathcal{E}^{\left(ji\right)}_{3}(y,\xi)&=\frac{2\varepsilon^{3}}{3\left|\xi_{j}-\xi_{i}\right|^{6}}\left(\left(\xi_{j}^{1}-\xi_{i}^{1}\right)^{3}-3\left(\xi_{j}^{1}-\xi_{i}^{1}\right)\left(\xi_{j}^{2}-\xi_{i}^{2}\right)^{2}\right)\left(y_{1}^{3}-3y_{1}y_{2}^{2}\right)\nonumber\\
        &+\frac{2\varepsilon^{3}}{3\left|\xi_{j}-\xi_{i}\right|^{6}}\left(\left(\xi_{j}^{2}-\xi_{i}^{2}\right)^{3}-3\left(\xi_{j}^{2}-\xi_{i}^{2}\right)\left(\xi_{j}^{1}-\xi_{i}^{1}\right)^{2}\right)\left(y_{2}^{3}-3y_{2}y_{1}^{2}\right),\label{remainder-modal-calculation-mode-3}
    \end{align}
    and
    \begin{align}
        &\mathcal{E}^{\left(ji\right)}_{4}\left(y,\xi\right)=\frac{8\varepsilon^{4}}{\left|\xi_{j}-\xi_{i}\right|^{8}}\left(\xi_{j}^{1}-\xi_{i}^{1}\right)\left(\xi_{j}^{2}-\xi_{i}^{2}\right)\left(\left(\xi_{j}^{1}-\xi_{i}^{1}\right)^{2}-\left(\xi_{j}^{2}-\xi_{i}^{2}\right)^{2}\right)y_{1}y_{2}\left(y_{1}^{2}-y_{2}^{2}\right)\nonumber\\
        &+\frac{8\varepsilon^{4}}{\left|\xi_{j}-\xi_{i}\right|^{8}}\left(\left(\left(\xi_{j}^{1}-\xi_{i}^{1}\right)^{2}-\left(\xi_{j}^{2}-\xi_{i}^{2}\right)^{2}\right)^{2}-4\left(\xi_{j}^{1}-\xi_{i}^{1}\right)^{2}\left(\xi_{j}^{2}-\xi_{i}^{2}\right)^{2}\right)\cdot\left(\left(y_{1}^{2}-y_{2}^{2}\right)^{2}-4y_{1}^{2}y_{2}^{2}\right).\label{remainder-modal-calculation-mode-4}
    \end{align}
    It is clear that $\mathcal{E}_{l}^{ji}$, for $l=2,3,4$ are a sum of mode $l$ terms in the $y$ coordinate system, and have size $\bigO{\left(\varepsilon^{l}\left|y\right|^{l}\left|\xi_{j}-\xi_{i}\right|^{-l}\right)}=\bigO{\left(\varepsilon^{l}t^{-\frac{l}{2}}\right)}$.

    \medskip
    Next, we note that 
    \begin{align}
        \gradperp_{y}\left(-\sum_{i\neq j}\frac{m_{i}}{2\pi}\frac{\varepsilon y\cdot\left(\xi_{j}-\xi_{i}\right)}{\left|\xi_{j}-\xi_{i}\right|^{2}}\right)=-\varepsilon\sum_{i\neq j}\frac{m_{i}}{2\pi}\frac{\left(\xi_{j}-\xi_{i}\right)^{\perp}}{\left|\xi_{j}-\xi_{i}\right|^{2}}.\label{remainder-modal-calculation-mode-1-1}
    \end{align}
    Thus, using \eqref{point-vortex-trajectory-function-decomposition}, we can write
    \begin{align*}
        &\gradperp_{y}\left(-\sum_{i\neq j}\frac{m_{i}}{2\pi}\frac{\varepsilon y\cdot\left(\xi_{j}-\xi_{i}\right)}{\left|\xi_{j}-\xi_{i}\right|^{2}}\right)-\varepsilon\dot{\xi}_{j}=\varepsilon\left(-\dot{\xi}_{*j}-\sum_{i\neq j}\frac{m_{i}}{2\pi}\frac{\left(\xi_{*j}-\xi_{*i}\right)^{\perp}}{\left|\xi_{*j}-\xi_{*i}\right|^{2}}\right)\\
        &+\varepsilon\left(-\dot{\tilde{\xi}}_{j}-\left(\sum_{i\neq j}\frac{m_{i}}{2\pi}\frac{\left(\xi_{j}-\xi_{i}\right)^{\perp}}{\left|\xi_{j}-\xi_{i}\right|^{2}}-\sum_{i\neq j}\frac{m_{i}}{2\pi}\frac{\left(\xi_{*j}-\xi_{*i}\right)^{\perp}}{\left|\xi_{*j}-\xi_{*i}\right|^{2}}\right)\right).
    \end{align*}
    Recalling the definitions \eqref{point-vortex-decomposition-vector-form} and \eqref{point-vortex-nonlinear-functionals}, as well as the fact that $\xi_{*}$ solves the system \eqref{KR}, we therefore have that 
    \begin{align}
        \gradperp_{y}\left(-\sum_{i\neq j}\frac{m_{i}}{2\pi}\frac{\varepsilon y\cdot\left(\xi_{j}-\xi_{i}\right)}{\left|\xi_{j}-\xi_{i}\right|^{2}}\right)-\varepsilon\dot{\xi}_{j}=-\varepsilon\left(\dot{\tilde{\xi}}_{j}+\mathcal{N}_{j}\left(\xi_{*}\right)\left[\tilde{\xi}\right]\right).\label{remainder-modal-calculation-mode-1-2}
    \end{align}
    Thus,
    \begin{align*}
        &\gradperp_{y}\left(\sum_{i\neq j}\kappa_{i}\Gamma\left(y+\frac{\xi_{j}(t)-\xi_{i}(t)}{\varepsilon}\right)\right)-\varepsilon\dot{\xi}_{j}=-\varepsilon\left(\dot{\tilde{\xi}}_{j}+\mathcal{N}_{j}\left(\xi_{*}\right)\left[\tilde{\xi}\right]\right)\nonumber\\
        &+\gradperp_{y}\left(-\sum_{i\neq j}\frac{m_{i}}{4\pi}\left(\mathcal{E}^{\left(ji\right)}_{2}(y,\xi)+\mathcal{E}^{\left(ji\right)}_{3}(y,\xi)+\mathcal{E}^{\left(ji\right)}_{4}(y,\xi)+\mathcal{R}^{\left(ji\right)}_{5}\left(y,\xi\right)\right)\right),
    \end{align*}
    as required.
\end{proof}
Now we construct the first approximation. Recall the definitions of $\psi^{in}$, $\phi^{in}$, $E_{j1}$, and $E_{2}^{out}$ from \eqref{psi-in-definition}--\eqref{first-approximation-outer-error-E2out-definition}.
\begin{theorem}\label{first-approximation-construction-theorem}
    Let $t\in\left[T_{0},T\right]$ as in \eqref{T0-definition}--\eqref{T-definition-2}. Then for all $\varepsilon>0$ small enough, and for any $\xi$ as in \eqref{point-vortex-trajectory-function-decomposition}, there exists functions $\psi_{jk}$, $\phi_{jk}$, $j=1,2,3$, $k=1,2$, $\psi^{out}$ such that $\phi_{jk}$ has the same support as $U_{j}$, and
    \begin{align*}
        \left\|\psi_{j1}\right\|_{L^{\infty}\left(\mathbb{R}^{2}\right)}+\left\|\nabla\psi_{j1}\right\|_{L^{\infty}\left(\mathbb{R}^{2}\right)}+\left\|\phi_{j1}\right\|_{L^{\infty}\left(\mathbb{R}^{2}\right)}&\leq \frac{C\varepsilon^{2}}{t},\\
        \left\|\psi_{j2}\right\|_{L^{\infty}\left(\mathbb{R}^{2}\right)}+\left\|\nabla\psi_{j2}\right\|_{L^{\infty}\left(\mathbb{R}^{2}\right)}+\left\|\phi_{j2}\right\|_{L^{\infty}\left(\mathbb{R}^{2}\right)}&\leq \frac{C\varepsilon^{4}}{t^{2}}.
    \end{align*}
     Moreover,
    \begin{align*}
        \|\left(\log{\left(\left|\cdot\right|+2\right)}\right)^{-1}\psi^{out}\|_{L^{\infty}\left(\mathbb{R}^{2}\right)}+\left\|\nabla\psi^{out}\right\|_{L^{\infty}\left(\mathbb{R}^{2}\right)}\leq\frac{C\varepsilon^{4}}{t^{2}},
    \end{align*}
    such that if $\psi_{j}=\psi_{j1}+\psi_{j2}$, $\phi_{j}=\phi_{j1}+\phi_{j2}$ in \eqref{psi-star-definition}--\eqref{omega-star-definition}, in coordinates $y=\frac{x-\xi_{j}}{\varepsilon}$, we have
    \begin{align*}
        &E_{j1}\left(\phi_{j},\psi_{j},\psi^{out};\xi\right)=-\varepsilon\nabla_{y}\left(U\left(y\right)+\phi_{j}(y)\right)\cdot\left(\dot{\tilde{\xi}}_{j}+\mathcal{N}_{j}\left(\xi_{*}\right)\left[\tilde{\xi}\right]\right)\nonumber\\
        &+\gamma\Gamma^{\gamma-1}_{+}\left(y\right)\mathcal{R}^{*}_{j1}\left(y,\xi\right)+\gamma\left(\gamma-1\right)\Gamma^{\gamma-2}_{+}\left(y\right)\mathcal{R}^{*}_{j2}\left(y,\xi\right)+\gamma\left(\gamma-1\right)\left(\gamma-2\right)\Gamma^{\gamma-3}_{+}\left(y\right)\mathcal{R}^{*}_{j3}\left(y,\xi\right),
    \end{align*}
    where, for $l=1,2,3$ and $j=1,2,3$
    \begin{align*}
        \left|\dell_{\Gamma}^{l}\left(\Gamma^{\gamma}_{+}\right)\mathcal{R}^{*}_{jl}\left(y,\xi\right)\right|&\leq\frac{C\varepsilon^{5}\dell_{\Gamma}^{l}\left(\Gamma^{\gamma}_{+}\right)}{t^{\frac{5}{2}}}.
    \end{align*}
    Finally,
    \begin{align*}
        E_{2}^{out}\left(\phi^{in},\psi^{in},\psi^{out};\xi\right)=\bigO{\left(\frac{\varepsilon^{6}}{t^{4}}\right)}.
    \end{align*}
\end{theorem}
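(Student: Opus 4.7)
The plan is to carry out a mode-by-mode cancellation of the inner error identified in Lemmas \ref{ER1-lemma} and \ref{remainder-modal-calculation-lemma}, then close the construction by solving a Poisson-type problem for $\psi^{out}$. The starting point is the observation (flagged already in Section \ref{appro}) that the linearised part of $E_{j1}$ is $-\kappa_j \nabla_y^\perp \Gamma \cdot \nabla_y \mathscr{L}(\psi_j)$, where $\mathscr{L}(\psi) = \Delta_y \psi + \gamma \Gamma_+^{\gamma-1}\psi$ is the linearisation of \eqref{power-semilinear-problem-R2}. In polar coordinates $y = r e^{i\theta}$ this takes the form $\kappa_j \frac{\Gamma'(r)}{r}\partial_\theta \mathscr{L}(\psi_j)$, so any forcing whose $\theta$-average vanishes can be cancelled by taking one $\theta$-primitive and then inverting $\mathscr{L}$ mode by mode. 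The required invertibility, boundedness, and boundary behaviour of $\mathscr{L}^{-1}$ on Fourier modes $\ell \ge 2$ is what Appendix \ref{appendix-section} is designed to provide.

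Lemma \ref{remainder-modal-calculation-lemma} gives exactly the decomposition needed: the order-$\varepsilon$ interaction term splits into a mode-$1$ contribution equal to the ODE defect $-\varepsilon(\dot{\tilde\xi}_j + \mathcal{N}_j(\xi_*)[\tilde\xi])$, pure mode-$\ell$ harmonic polynomial pieces $\mathcal{E}_\ell^{(ji)}$ for $\ell = 2,3,4$ of respective sizes $\varepsilon^\ell t^{-\ell/2}$, and a tail $\mathcal{R}_5^{(ji)}$ of size $\varepsilon^5 t^{-5/2}$. The mode-$1$ piece is left untouched since it appears verbatim in the theorem's formula and will be killed later by adjusting $\tilde\xi$. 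For each $\ell = 2,3,4$ I would solve $\mathscr{L}(\psi_j^{(\ell)}) = F_\ell$, with $F_\ell$ the mode-$\ell$ $\theta$-primitive of the source divided by $\kappa_j \Gamma'(r)/r$. The appendix yields $\psi_j^{(\ell)}$ of size $\varepsilon^\ell t^{-\ell/2}$, harmonic outside $\overline{B_1(0)}$, so that $\phi_j^{(\ell)} = -\Delta_y \psi_j^{(\ell)} = \gamma\Gamma_+^{\gamma-1}\psi_j^{(\ell)} - F_\ell$ is automatically supported in $\overline{B_1(0)} = \supp U$. Setting $\psi_{j1} = \psi_j^{(2)} + \psi_j^{(3)}$ and $\psi_{j2} = \psi_j^{(4)}$ (and analogously for $\phi_{jk}$) gives the stated $L^\infty$ bounds after absorbing $\varepsilon^3 t^{-3/2}$ into $C\varepsilon^2 t^{-1}$.

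Substituting back into $E_{j1}$, the remaining contributions are: the explicit mode-$1$ ODE term; the tail $\mathcal{R}_5^{(ji)}$ paired against $\nabla_y U$, which by $\nabla U = \gamma\Gamma_+^{\gamma-1}\nabla\Gamma$ carries the factor $\gamma\Gamma_+^{\gamma-1}$ and lands in $\mathcal{R}_{j1}^*$; the time derivative $\varepsilon^2 \partial_t \phi_j$, whose $t$-dependence is inherited from the coefficients of $F_\ell$ and costs one extra power of $1/t$, again landing in $\mathcal{R}_{j1}^*$; and the self-coupling $\kappa_j \nabla_y^\perp \psi_j \cdot \nabla_y \phi_j$ together with the higher-order Taylor remainders of $\Gamma_+^\gamma$ around $\Gamma$, which naturally carry prefactors $\gamma(\gamma-1)\Gamma_+^{\gamma-2}$ and $\gamma(\gamma-1)(\gamma-2)\Gamma_+^{\gamma-3}$ and populate $\mathcal{R}_{j2}^*$ and $\mathcal{R}_{j3}^*$. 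Using $\|\psi_{j1}\|_\infty^2 = O(\varepsilon^4 t^{-2})$, $\|\psi_{j1}\|_\infty \|\psi_{j2}\|_\infty = O(\varepsilon^6 t^{-3})$, and similar products, routine bookkeeping produces the uniform bound $C\varepsilon^5 t^{-5/2}$ for each $\mathcal{R}_{jl}^*$.

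For the outer stream function I would define $\psi^{out}$ by Newton potential inversion of $\Delta_x\psi^{out} = -\sum_j \kappa_j(\psi_j \Delta_x\eta_j + 2\nabla_x\eta_j \cdot \nabla_x\psi_j)$. The gain is that each $\psi_j^{(\ell)}$ is pure Fourier mode $\ell \ge 2$ in $y$ and harmonic outside $B_1(0)$, so it decays like $|y|^{-\ell}$ at infinity; evaluated on the annular support of $\nabla\eta_j$ where $|y| \sim K t^{1/2}/\varepsilon$, the source has size $O(\varepsilon^6 t^{-4})$, which is what propagates into the stated bound on $E_2^{out}$. The $\log(|\cdot|+2)$ normalisation used for $\psi^{out}$ in the theorem absorbs the mild logarithmic growth of the Newtonian potential of a source with nonzero total mass. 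The hard part throughout is the invertibility and the fine boundary behaviour of $\mathscr{L}^{-1}$ on each mode $\ell \ge 2$ --- in particular matching harmonically across $\partial B_1$ to keep $\phi_j^{(\ell)}$ compactly supported, with quantitative $L^\infty$ bounds sharp enough to preserve the $t^{-\ell/2}$ decay. This is precisely the role of Appendix \ref{appendix-section}, and it is where the technical hypothesis $\gamma \ge 19$ (flagged already in Theorem \ref{teo1}) enters.
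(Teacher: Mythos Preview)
Your overall strategy is right, but the decomposition $\psi_{j1}=\psi_j^{(2)}+\psi_j^{(3)}$, $\psi_{j2}=\psi_j^{(4)}$ does not work, and the ``routine bookkeeping'' claim hides a genuine gap. After the single elliptic improvement you describe, two residual terms in $E_{j1}$ are only of size $\varepsilon^{4}t^{-2}$: the time derivative $\varepsilon^{2}\partial_{t}\phi_{j}^{(2)}$ and the quadratic self-interaction $\kappa_{j}\nabla_{y}^{\perp}\psi_{j}^{(2)}\cdot\nabla_{y}\phi_{j}^{(2)}$ (as well as the coupling $\nabla_{y}^{\perp}\mathcal{E}_{2}^{(ji)}\cdot\nabla_{y}\phi_{j}^{(2)}$). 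Since $\varepsilon^{4}t^{-2}\gg\varepsilon^{5}t^{-5/2}$ for $\varepsilon$ small and $t$ large, these cannot be placed in $\mathcal{R}_{jl}^{*}$; your sentence ``Using $\|\psi_{j1}\|_{\infty}^{2}=O(\varepsilon^{4}t^{-2})$ \ldots\ routine bookkeeping produces the uniform bound $C\varepsilon^{5}t^{-5/2}$'' is simply false.

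What the paper does instead is set $\psi_{j1}=\sum_{\ell=2}^{4}\psi_{j1}^{(\ell)}$ (all three modes), and then construct $\psi_{j2}$ as a \emph{second} elliptic improvement, designed precisely to cancel the three $\varepsilon^{4}t^{-2}$ terms just listed. Because these terms are pure mode~$2$ or mode~$4$ in $y$, one divides by $\kappa_{j}\Gamma'(r)/r$, takes a $\theta$-primitive, and inverts $\mathscr{L}$ again via the appendix; the resulting $\psi_{j2}$ has size $\varepsilon^{4}t^{-2}$ as stated. Only after this second pass are the residual time-derivative and cross terms (e.g.\ $\varepsilon^{2}\partial_{t}\phi_{j1}^{(3)}$, $\nabla_{y}^{\perp}\psi_{j1}^{(3)}\cdot\nabla_{y}\phi_{j1}^{(2)}$, $\varepsilon^{2}\partial_{t}\phi_{j2}$) of size $\varepsilon^{5}t^{-5/2}$ or smaller. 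Two minor points: the source for $\psi^{out}$ built from $\psi_{j1}$ is $O(\varepsilon^{4}t^{-3})$, not $O(\varepsilon^{6}t^{-4})$; the latter is the size of the \emph{leftover} $E_{2}^{out}$ coming from the uncorrected $\psi_{j2}$ piece. And the hypothesis $\gamma\ge 19$ is not used anywhere in this theorem or in Appendix~\ref{appendix-section}; it enters only in the a~priori estimates of Section~\ref{constructing-full-solution-section}.
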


The construction will proceed as follows: we rewrite $E_{j1}$ from its form in Lemma \ref{ER1-lemma} and then construct an elliptic improvement. We then use this elliptic improvement to construct $\psi^{out}$. Then we analyze the errors created by the first elliptic improvement and $\psi^{out}$ in order to both construct a second elliptic improvement, and finally check that the errors left after constructing the second elliptic improvement have the desired properties as stated in Theorem \ref{first-approximation-construction-theorem}.
\begin{proof} 
    \textbf{Step I: Rewriting $E_{j1}$ Part A}

    \medskip
    For $j=1,2,3$, we first decompose $\phi_{j}=\phi_{j1}+\phi_{j2}$ and $\psi_{j}=\psi_{j1}+\psi_{j2}$ so that
    \begin{align*}
        -\Delta_{y}\psi_{jk}=\phi_{jk},\ k=1,2,
    \end{align*}
    where the Poisson equations mean above solving on $\mathbb{R}^2$, where we note the right hand side for all $6$ Poisson equations above will have compact support. Let
    \begin{align*}
        \mathcal{R}_{j6}\left(y,\xi\right)&=\sum_{i\neq j}\frac{m_{i}}{4\pi}\left(\mathcal{E}^{\left(ji\right)}_{2}(y,\xi_{*})+\mathcal{E}^{\left(ji\right)}_{3}(y,\xi_{*})+\mathcal{E}^{\left(ji\right)}_{4}(y,\xi_{*})\right)\\
        &-\sum_{i\neq j}\frac{m_{i}}{4\pi}\left(\mathcal{E}^{\left(ji\right)}_{2}(y,\xi)+\mathcal{E}^{\left(ji\right)}_{3}(y,\xi)+\mathcal{E}^{\left(ji\right)}_{4}(y,\xi)+\mathcal{R}^{\left(ji\right)}_{5}\left(y,\xi\right)\right),
    \end{align*}
    and correspondingly let
    \begin{align}
        \mathcal{E}_{j6}\left(y,\xi_{*}\right)=-\sum_{i\neq j}\frac{m_{i}}{4\pi}\left(\mathcal{E}^{\left(ji\right)}_{2}(y,\xi_{*})+\mathcal{E}^{\left(ji\right)}_{3}(y,\xi_{*})+\mathcal{E}^{\left(ji\right)}_{4}(y,\xi_{*})\right).\label{E-j6-definition}
    \end{align}
    Using \eqref{xi0-bound}--\eqref{tildexi-bound}, and Lemmas \ref{p-q-rational-function-linearisation-lemma} and \ref{remainder-modal-calculation-lemma}, we have that
    \begin{align}
        \left|\nabla_{y}\left(U\left(y\right)\right)\cdot\gradperp_{y}\mathcal{R}_{j6}\left(y,\xi\right)\right|\leq \frac{C\varepsilon^{5}\gamma\Gamma^{\gamma-1}_{+}\left(y\right)}{t^{3}}.\label{first-approximation-construction-2a}
    \end{align}
    Then we can rewrite \eqref{ER1-lemma-statement-1} on $\mathbb{R}^{2}$ as
    \begin{align}
        &E_{j1}=\varepsilon^{2}\dell_{t}\phi_{j1}+\kappa_{j}\gradperp_{y}\psi_{j1}\cdot\grad_{y}\phi_{j1}+\left(\varepsilon^{2}\dell_{t}\phi_{j2}+\kappa_{j}\gradperp_{y}\psi_{j2}\cdot\grad_{y}\phi_{j}+\kappa_{j}\gradperp_{y}\psi_{j1}\cdot\grad_{y}\phi_{j2}\right)\nonumber\\
        &-\kappa_{j}\gradperp_{y}\Gamma\cdot\grad_{y}\left(\Delta_{y}\psi_{j1}+\gamma\Gamma^{\gamma-1}_{+}\psi_{j1}-\kappa_{j}^{-1}\gamma\Gamma^{\gamma-1}_{+}\mathcal{E}_{j6}\right)-\kappa_{j}\gradperp_{y}\Gamma\cdot\grad_{y}\left(\Delta_{y}\psi_{j2}+\gamma\Gamma^{\gamma-1}_{+}\psi_{j2}\right)\nonumber\\
        &-\grad_{y}(U+\phi_{j})\cdot\left(\varepsilon\left(\dot{\tilde{\xi}}_{j}+\mathcal{N}_{j}\left(\xi_{*}\right)\left[\tilde{\xi}\right]\right)\right)+\nabla_{y}U\cdot\gradperp_{y}\mathcal{R}_{j6}+\gradperp_{y}\left(\mathcal{E}_{j6}+\mathcal{R}_{j6}\right)\cdot\grad_{y}\phi_{j}\nonumber\\
        &+\gradperp_{y}\psi^{out}\cdot\grad_{y}\left(U+\phi_{j}\right).\label{first-approximation-construction-8}
    \end{align}
    \medskip
    \noindent \textbf{Step II: Constructing $\psi_{j1}$}

    \medskip
    Noting that $\mathcal{E}_{j6}(y,\xi_{*})$ defined in \eqref{E-j6-definition} is a sum of mode $2$, $3$, and $4$ terms in $y$, we can use Lemma \ref{vortex-linearised-equation-fourier-coefficients-behaviour-lemma} to define $\psi_{j1}^{\left(2\right)}$, $\psi_{j1}^{\left(3\right)}$, and $\psi_{j1}^{\left(4\right)}$ as the solutions to 
    \begin{align}
        \Delta_{y}\psi_{j1}^{\left(l\right)}(y)+\gamma\Gamma^{\gamma-1}_{+}(y)\psi_{j1}^{\left(l\right)}(y)&=-\kappa_{j}^{-1}\gamma\Gamma^{\gamma-1}_{+}(y)\sum_{i\neq j}\frac{m_{i}}{4\pi}\mathcal{E}^{\left(ji\right)}_{l}(y,\xi_{*})\nonumber\\
        &=-\kappa_{j}^{-1}\gamma\Gamma^{\gamma-1}_{+}(y)\sum_{i\neq j}\frac{m_{i}}{4\pi}r^{l}\tilde{\mathcal{E}}^{\left(ji\right)}_{l}(\sin{\left(l\theta\right)},\cos{\left(l\theta\right)},\xi_{*}),\label{psi-j1-ODEs}
    \end{align}
    for $l=2,3,4$, where we recall $y=re^{i\theta}$. Then defining $\phi_{j1}^{\left(l\right)}=-\Delta_{y}\psi_{j1}^{\left(l\right)}$, we obtain
    \begin{align}
        \phi_{j1}^{\left(l\right)}=-\kappa_{j}^{-1}\gamma\Gamma^{\gamma-1}_{+}(y)\sum_{i\neq j}\frac{m_{i}}{4\pi}\left(\frac{\varrho_{l}\left(r\right)}{r^{l}}+1\right)\mathcal{E}^{\left(ji\right)}_{l}(y,\xi_{*}).\label{phi-j1-l-formula}
    \end{align}
    where $\varrho_{l}$ is defined in \ref{vortex-linearised-equation-rk-error-solution}. Then, from Lemmas \ref{remainder-modal-calculation-lemma} and \ref{vortex-linearised-equation-fourier-coefficients-behaviour-lemma}, we have the bounds, for $l=2,3,4$
    \begin{align}
        \|\psi^{\left(l\right)}_{j1}\|_{L^{\infty}\left(\mathbb{R}^{2}\right)}+\|\nabla\psi^{\left(l\right)}_{j1}\|_{L^{\infty}\left(\mathbb{R}^{2}\right)}+\|\phi^{\left(l\right)}_{j1}\|_{L^{\infty}\left(\mathbb{R}^{2}\right)}\leq \frac{C\varepsilon^{l}}{t^{\frac{l}{2}}}.\label{psi-j1-bounds}
    \end{align}
    Finally, letting $\psi_{j1}=\sum_{l=2}^{4}\psi_{j1}^{\left(l\right)}$, and analogously for $\phi_{j1}$, we obtain 
    \begin{align*}
        -\kappa_{j}\gradperp_{y}\Gamma\cdot\grad_{y}\left(\Delta_{y}\psi_{j1}+\gamma\Gamma^{\gamma-1}_{+}\psi_{j1}-\kappa_{j}^{-1}\gamma\Gamma^{\gamma-1}_{+}\mathcal{E}_{j6}\right)=0.
    \end{align*}
    \medskip
    \noindent \textbf{Step III: Constructing $\psi^{out}$}

    \medskip
    Now we construct $\psi^{out}$ in the original $x$ variable. As $E_{2}^{out}$ has the form \eqref{first-approximation-outer-error-E2out-definition}, we obtain $\psi^{out}$ by solving the equation
    \begin{align}
        -\Delta_{x}\psi^{out}=\sum_{j=1}^{3}\kappa_{j}\left[\psi_{j1}\left(\frac{x-\xi_{j}}{\varepsilon},\xi_{*}\right)\Delta_{x}\eta_{j}+2\nabla_{x}\eta_{j}\cdot\nabla_{x}\psi_{j1}\left(\frac{x-\xi_{j}}{\varepsilon},\xi_{*}\right)\right]\label{psi-out-equation}
    \end{align}
    on $\mathbb{R}^{2}$. The size of $\psi^{out}$ depends on the size of $\psi_{j1}$ and $\nabla\psi_{j1}$ on the support of the derivatives of the cutoffs $\eta_{j}$, $j=1,2,3$. For $\psi_{j1}$, the definition of the cutoff $\eta_{j}$ in \eqref{cutoffs-definition-1} means we can consider $\psi_{j1}\left(y,\xi_{*}\right)$ for $\left|y\right|\geq C\varepsilon^{-1}t^{\frac{1}{2}}$.
    
    \medskip
    We can write 
    \begin{align*}
        \psi_{j1}\left(y,\xi_{*}\right)=-\frac{1}{4\pi}\int_{B_{1}\left(0\right)}\log\left|z-y\right|^{2}\phi_{j1}\left(z,\xi_{*}\right)dz,
    \end{align*}
    where the region of integration comes from \eqref{phi-j1-l-formula} and the fact that $\supp\Gamma^{\gamma-1}_{+}(y)=B_{1}\left(0\right)$. Then, since $\left|y\right|\geq C\varepsilon^{-1}t^{\frac{1}{2}}$ and $\left|z\right|\leq 1$, we have
    \begin{align*}
        \psi_{j1}\left(y,\xi_{*}\right)=-\frac{1}{4\pi}\int_{B_{1}\left(0\right)}\log\left(1-\frac{2z\cdot y}{\left|y\right|^{2}}+\frac{\left|z\right|^{2}}{\left|y\right|^{2}}\right)\phi_{j1}\left(z,\xi_{*}\right)dz-\frac{\log{\left|y\right|}}{2\pi}\int_{B_{1}\left(0\right)}\phi_{j1}\left(z,\xi_{*}\right)dz.
    \end{align*}
    Note that from \eqref{phi-j1-l-formula}, the second integrand on the right hand side above is a sum of mode $2$, mode $3$, and mode $4$ terms with respect to $z$. Thus the second integral is equal to $0$. Next, expanding 
    \begin{align*}
        \log\left(1-\frac{2z\cdot y}{\left|y\right|^{2}}+\frac{\left|z\right|^{2}}{\left|y\right|^{2}}\right)
    \end{align*}
    as a series gives, with respect to coordinates $z$, a mode $1$ term of order $\bigO{\left(\left|y\right|^{-1}\right)}$, a mode $2$ term of order $\bigO{\left(\left|y\right|^{-2}\right)}$, and a remainder of order $\bigO{\left(\left|y\right|^{-3}\right)}$. The mode $1$ term when integrated against $\phi_{j1}$ again evaluates to $0$ due, again, to the fact that $\phi_{j1}$ is a finite sum of mode $2,3,$ and $4$ terms with respect to $z$. Thus the first term to give a nonzero contribution is the mode $2$ term, and hence
    \begin{align*}
        \psi_{j1}\left(y,\xi_{*}\right)=\bigO{\left(\varepsilon^{2}t^{-1}\left|y\right|^{-2}\right)}=\bigO{\left(\varepsilon^{4}t^{-2}\right)}
    \end{align*}
    on the region $\left|y\right|\geq C\varepsilon^{-1}t^{\frac{1}{2}}$.
    For $\nabla\psi_{j1}$ on this region, first note
    \begin{align*}
        \nabla_{x}\psi_{j1}\left(\frac{x-\xi_{j}}{\varepsilon},\xi_{*}\right)=\frac{1}{\varepsilon}\nabla_{y}\psi_{j1}\left(y,\xi_{*}\right).
    \end{align*}
    Then note that
    \begin{align*}
        \nabla_{y}\log\left(1-\frac{2z\cdot y}{\left|y\right|^{2}}+\frac{\left|z\right|^{2}}{\left|y\right|^{2}}\right)
    \end{align*}
    is, with respect to the coordinate $z$, a mode $1$ term of order $\bigO{\left(\left|y\right|^{-2}\right)}$, and a remainder of order $\bigO{\left(\left|y\right|^{-3}\right)}$. Thus, for $\left|y\right|\geq C\varepsilon^{-1}t^{\frac{1}{2}}$,
    \begin{align*}
        \frac{1}{\varepsilon}\nabla_{y}\psi_{j1}\left(y,\xi_{*}\right)=\bigO{\left(\varepsilon t^{-1}\left|y\right|^{-3}\right)}=\bigO{\left(\varepsilon^{4}t^{-\frac{5}{2}}\right)}.
    \end{align*}
    Finally, noting that $\nabla_{x}\eta_{j}$ and $\Delta_{x}\eta_{j}$ are of order $\bigO{\left(t^{-\frac{1}{2}}\right)}$ and $\bigO{\left(t^{-1}\right)}$ respectively, we obtain that the right hand side of \eqref{psi-out-equation} is $\bigO{\left(\varepsilon^{4}t^{-3}\right)}$. Thus, noting that this right hand side is supported on an annulus with inner and outer radius $\sim\sqrt{t}$, implying the area of its support is $\sim t$, we have that
    \begin{align}
        \left\|\left(\log{\left(\left|\cdot\right|+2\right)}\right)^{-1}\psi^{out}\right\|_{L^{\infty}\left(\mathbb{R}^{2}\right)}+\left\|\nabla\psi^{out}\right\|_{L^{\infty}\left(\mathbb{R}^{2}\right)}\leq\frac{C\varepsilon^{4}}{t^{2}}.\label{first-approximation-construction-22}
    \end{align}
    Now we estimate $\gradperp_{y}\psi\left(\varepsilon y+\xi_{j}\right)\cdot\nabla_{y}U(y)$ for each $j=1,2,3$.  First, we see that 
    \begin{align*}
        \gradperp_{y}\psi^{out}\left(\varepsilon y+\xi_{j}\right)\cdot\grad_{y}U=-\gamma\Gamma^{\gamma-1}_{+}(r)\gradperp_{y}\Gamma\cdot\nabla_{y}\psi^{out}=\gamma\Gamma^{\gamma-1}_{+}(r)\frac{\Gamma'(r)}{r}\dell_{\theta}\psi^{out}\left(\varepsilon y+\xi_{j}\right),
    \end{align*}
    Next, note that since $\supp{U}=B_{1}(0)$, and $\psi^{out}$ satisfies \eqref{psi-out-equation}, we can write, for $\left|y\right|\leq 1$ 
    \begin{align*}
        \psi^{out}\left(\varepsilon y+\xi_{j}\right)&=-\frac{1}{4\pi}\int_{\left|z-\xi_{j}\right|\geq K\left(1+\frac{t}{\tau}\right)^{\frac{1}{2}}}\log{\left(1-\frac{2\varepsilon\left(z-\xi_{j}\right).y}{\left|z-\xi_{j}\right|^{2}}+\frac{\varepsilon^{2}\left|y\right|^{2}}{\left|z-\xi_{j}\right|^{2}}\right)}\mathscr{S}(z)\ dz\nonumber\\
        &-\frac{1}{2\pi}\int_{\left|z-\xi_{j}\right|\geq K\left(1+\frac{t}{\tau}\right)^{\frac{1}{2}}}\log{\left|z-\xi_{j}\right|}\ \mathscr{S}(z)\ dz,
    \end{align*}
    where $\mathscr{S}$ is the source term on the right hand side of \eqref{psi-out-equation}, and as stated above, is supported on three annuli of areas $\sim t$ each and has order $\bigO{\left(\varepsilon^{4} t^{-3}\right)}$. Then, taking an angular derivative of both sides above clearly makes the second term on the right hand side disappear, and note that on the region $\left|y\right|\leq 1$, we have that
    \begin{align*}
        \log{\left(1-\frac{2\varepsilon\left(z-\xi_{j}\right).y}{\left|z-\xi_{j}\right|^{2}}+\frac{\varepsilon^{2}\left|y\right|^{2}}{\left|z-\xi_{j}\right|^{2}}\right)}=\bigO{\left(\frac{\varepsilon}{t^{\frac{1}{2}}}\right)},
    \end{align*}
    then we have the estimate
    \begin{align}
        \left|\gradperp_{y}\psi^{out}\left(\varepsilon y+\xi_{j}\right)\cdot\nabla_{y}U(y)\right|\leq \frac{C\varepsilon^{5}}{t^{\frac{5}{2}}}.\label{psi-out-grad-y-bound}
    \end{align}
    \medskip
    \textbf{Step IV: Constructing $\psi_{j2}$}

    \medskip
    From \eqref{E-j6-definition} and \eqref{psi-j1-ODEs}, we can rewrite \eqref{first-approximation-construction-8} as
    \begin{align}
        &E_{j1}=\left(\varepsilon^{2}\dell_{t}\phi_{j2}+\kappa_{j}\gradperp_{y}\psi_{j2}\cdot\grad_{y}\phi_{j}+\kappa_{j}\gradperp_{y}\psi_{j1}\cdot\grad_{y}\phi_{j2}\right)+\varepsilon^{2}\dell_{t}\left(\phi_{j1}^{\left(3\right)}+\phi_{j1}^{\left(4\right)}\right)\nonumber\\
        &+\kappa_{j}\gradperp_{y}\left(\sum_{l=3}^{4}\psi^{\left(l\right)}_{j1}\right)\cdot\grad_{y}\phi^{\left(2\right)}_{j1}+\kappa_{j}\gradperp_{y}\psi^{\left(2\right)}_{j1}\cdot\grad_{y}\left(\sum_{l=3}^{4}\phi^{\left(l\right)}_{j1}\right)+\kappa_{j}\gradperp_{y}\left(\sum_{l=3}^{4}\psi^{\left(l\right)}_{j1}\right)\cdot\grad_{y}\left(\sum_{l=3}^{4}\phi^{\left(l\right)}_{j1}\right)\nonumber\\
        &+\gradperp_{y}\left(\sum_{i\neq j}\frac{m_{i}}{4\pi}\left(\mathcal{E}^{\left(ji\right)}_{3}\left(\xi_{*}\right)+\mathcal{E}^{\left(ji\right)}_{4}\left(\xi_{*}\right)\right)\right)\cdot\grad_{y}\phi_{j1}+\gradperp_{y}\left(\sum_{i\neq j}\frac{m_{i}}{4\pi}\mathcal{E}^{\left(ji\right)}_{2}(y,\xi_{*})\right)\cdot\grad_{y}\left(\phi_{j1}^{\left(3\right)}+\phi_{j1}^{\left(4\right)}\right)\nonumber\\
        &-\kappa_{j}\gradperp_{y}\Gamma\cdot\grad_{y}\left(\Delta_{y}\psi_{j2}+\gamma\Gamma^{\gamma-1}_{+}\psi_{j2}\right)+\gradperp_{y}\left(\sum_{i\neq j}\frac{m_{i}}{4\pi}\mathcal{E}^{\left(ji\right)}_{2}(y,\xi_{*})\right)\cdot\grad_{y}\phi_{j1}^{\left(2\right)}\nonumber\\
        &+\varepsilon^{2}\dell_{t}\phi_{j1}^{\left(2\right)}+\kappa_{j}\gradperp_{y}\psi^{\left(2\right)}_{j1}\cdot\grad_{y}\phi^{\left(2\right)}_{j1}+\nabla_{y}\left(U+\phi_{j}\right)\cdot\gradperp_{y}\mathcal{R}_{j6}+\gradperp_{y}\mathcal{E}_{j6}\cdot\grad_{y}\phi_{j2}\nonumber\\
        &+\gradperp_{y}\psi^{out}\cdot\grad_{y}\left(U+\phi_{j}\right)-\varepsilon\grad_{y}\left(U+\phi_{j}\right)\cdot\left(\dot{\tilde{\xi}}_{j}+\mathcal{N}_{j}\left(\xi_{*}\right)\left[\tilde{\xi}\right]\right).\label{first-approximation-construction-22a}
    \end{align}
    Now, to construct $\psi_{j2}$, and therefore $\phi_{j2}$, we would like to solve the elliptic problem given by
    \begin{align}
        \kappa_{j}\gradperp_{y}\Gamma\cdot\grad_{y}\left(\Delta_{y}\psi_{j2}+\gamma\Gamma^{\gamma-1}_{+}\psi_{j2}\right)&=\gradperp_{y}\left(\sum_{i\neq j}\frac{m_{i}}{4\pi}\mathcal{E}^{\left(ji\right)}_{2}(y,\xi_{*})\right)\cdot\grad_{y}\phi_{j1}^{\left(2\right)}\nonumber\\
        &+\varepsilon^{2}\dell_{t}\phi_{j1}^{\left(2\right)}+\kappa_{j}\gradperp_{y}\psi^{\left(2\right)}_{j1}\cdot\grad_{y}\phi^{\left(2\right)}_{j1}\label{psi-j2-equation}
    \end{align}
    on $\mathbb{R}^{2}$. We note that the first and third terms on the right hand side of \eqref{psi-j2-equation} are a sum of mode $4$ terms of order $\varepsilon^{4}t^{-4}$ on $B_{1}(0)$, and using  \eqref{phi-j1-l-formula}, the second term is a sum of mode $2$ terms of order $\varepsilon^{4}t^{-4}$ on $B_{1}(0)$.

    \medskip
    
    The form of the $\phi_{j1}^{\left(l\right)}$ in \eqref{phi-j1-l-formula} implies that the first three terms come with either a factor of either $\gamma\Gamma^{\gamma-1}_{+}(y)$ or $\gamma\left(\gamma-1\right)\Gamma^{\gamma-2}_{+}(y)$, and this is also clear for the last term from above, and so the terms on the right hand side are indeed are supported on $B_{1}(0)$.

    \medskip
    
    Then, noting that both $\Gamma'(r)/r$, its reciprocal, and both of their gradients are bounded at the origin, we can rewrite \eqref{psi-j2-equation} as
    \begin{align}
        \dell_{\theta}\left(\Delta_{y}\psi_{j2}+\gamma\Gamma^{\gamma-1}_{+}\psi_{j2}\right)&=-\frac{r}{\kappa_{j}\Gamma'(r)}\gradperp_{y}\left(\sum_{i\neq j}\frac{m_{i}}{4\pi}\mathcal{E}^{\left(ji\right)}_{2}(y,\xi_{*})\right)\cdot\grad_{y}\phi_{j1}^{\left(2\right)}\nonumber\\
        &-\frac{r}{\kappa_{j}\Gamma'(r)}\varepsilon^{2}\dell_{t}\phi_{j1}^{\left(2\right)}-\frac{r}{\Gamma'(r)}\gradperp_{y}\psi^{\left(2\right)}_{j1}\cdot\grad_{y}\phi^{\left(2\right)}_{j1}.\label{psi-j2-equation-2}
    \end{align}
    Thus, splitting into the relevant modes as when constructing $\psi_{j1}$, we wish to solve, for $l=2,4$, the equations
    \begin{align}
        \Delta_{y}\psi_{j2}^{\left(l\right)}+\gamma\Gamma^{\gamma-1}_{+}\psi_{j2}^{\left(l\right)}=\mathcal{F}_{j}^{\left(l\right)},\label{psi-j2-modal-equations}
    \end{align}
    where the $\mathcal{F}_{j}^{\left(l\right)}$ are mode $l$ terms corresponding to the antiderivatives in $\theta$ of the mode $l$ terms for the three terms on the right hand side of \eqref{psi-j2-equation-2}. We impose the necessary bounds on integration to ensure no mode $0$ terms on the right hand side of \eqref{psi-j2-modal-equations}, which is possible as we only require equality in the sense of \eqref{psi-j2-equation-2} to be satisfied. 
    
    \medskip
    Then, since one can show that the $\mathcal{F}_{j}^{\left(l\right)}$ also behave as $r^{\left|l\right|}$ as $r\to0$, we can once again apply Lemma \ref{vortex-linearised-equation-fourier-coefficients-behaviour-lemma}, with a slight modification to accommodate right hand sides with a factor of $\gamma\left(\gamma-1\right)\Gamma^{\gamma-2}_{+}(y)$ as well as $\gamma\Gamma^{\gamma-1}_{+}(y)$, to obtain existence and uniqueness for $\psi_{j2}^{\left(l\right)}$ solving \eqref{psi-j2-modal-equations}, along with bounds
    \begin{align}
        \|\psi_{j2}^{\left(l\right)}\|_{L^{\infty}\left(\mathbb{R}^{2}\right)}+\|\nabla\psi_{j2}^{\left(l\right)}\|_{L^{\infty}\left(\mathbb{R}^{2}\right)}+\|\phi_{j2}^{\left(l\right)}\|_{L^{\infty}\left(\mathbb{R}^{2}\right)}\leq \frac{C\varepsilon^{4}}{t^{2}},\label{psi-j2-elliptic-estimates}
    \end{align}
    for $l=2,4$. Once again, $-\Delta_{y}\psi_{j2}^{\left(l\right)}=\phi_{j2}^{\left(l\right)}$, and one can use this fact as well as \eqref{psi-j2-equation-2} and \eqref{psi-j2-modal-equations} to show that
    \begin{align}
        \phi_{j2}^{\left(l\right)}=\gamma\Gamma^{\gamma-1}_{+}(y)\mathcal{H}_{1}^{\left(l\right)}\left(y,\xi_{*}\right)+\gamma\left(\gamma-1\right)\Gamma^{\gamma-2}_{+}(y)\mathcal{H}_{2}^{\left(l\right)}\left(y,\xi_{*}\right),\label{form-of-phi-j2}
    \end{align}
    where the $\mathcal{H}_{i}^{\left(l\right)}$ are $C^{1}$ functions in $y$, of order $\varepsilon^{4}t^{-2}$ on $B_{1}(0)$. Then, define
    \begin{align}
        \psi_{j2}=\sum_{l=2,4}\psi_{j2}^{\left(l\right)}\quad \phi_{j2}=\sum_{l=2,4}\phi_{j2}^{\left(l\right)}.\label{psi-j2-phi-j2-definitions}
    \end{align}
    Given $\psi_{j2}$ by construction satisfies \eqref{psi-j2-equation}, we can rewrite $E_{j1}$ from \eqref{first-approximation-construction-22a} as
    \begin{align}
        &E_{j1}=\left(\varepsilon^{2}\dell_{t}\phi_{j2}+\kappa_{j}\gradperp_{y}\psi_{j2}\cdot\grad_{y}\phi_{j}+\kappa_{j}\gradperp_{y}\psi_{j1}\cdot\grad_{y}\phi_{j2}\right)+\varepsilon^{2}\dell_{t}\left(\phi_{j1}^{\left(3\right)}+\phi_{j1}^{\left(4\right)}\right)\nonumber\\
        &+\kappa_{j}\gradperp_{y}\left(\sum_{l=3}^{4}\psi^{\left(l\right)}_{j1}\right)\cdot\grad_{y}\phi^{\left(2\right)}_{j1}+\kappa_{j}\gradperp_{y}\psi^{\left(2\right)}_{j1}\cdot\grad_{y}\left(\sum_{l=3}^{4}\phi^{\left(l\right)}_{j1}\right)+\kappa_{j}\gradperp_{y}\left(\sum_{l=3}^{4}\psi^{\left(l\right)}_{j1}\right)\cdot\grad_{y}\left(\sum_{l=3}^{4}\phi^{\left(l\right)}_{j1}\right)\nonumber\\
        &+\gradperp_{y}\left(\sum_{i\neq j}\frac{m_{i}}{4\pi}\left(\mathcal{E}^{\left(ji\right)}_{3}\left(\xi_{*}\right)+\mathcal{E}^{\left(ji\right)}_{4}\left(\xi_{*}\right)\right)\right)\cdot\grad_{y}\phi_{j1}+\gradperp_{y}\left(\sum_{i\neq j}\frac{m_{i}}{4\pi}\mathcal{E}^{\left(ji\right)}_{2}(y,\xi_{*})\right)\cdot\grad_{y}\left(\phi_{j1}^{\left(3\right)}+\phi_{j1}^{\left(4\right)}\right)\nonumber\\
        &+\nabla_{y}\left(U+\phi_{j}\right)\cdot\gradperp_{y}\mathcal{R}_{j6}+\gradperp_{y}\mathcal{E}_{j6}\cdot\grad_{y}\phi_{j2}+\gradperp_{y}\psi^{out}\cdot\grad_{y}\left(U+\phi_{j}\right)\nonumber\\
        &-\varepsilon\grad_{y}\left(U+\phi_{j}\right)\cdot\left(\dot{\tilde{\xi}}_{j}+\mathcal{N}_{j}\left(\xi_{*}\right)\left[\tilde{\xi}\right]\right).\label{first-approximation-construction-22b}
    \end{align}
    Then, using \eqref{phi-j1-l-formula}, \eqref{psi-j1-bounds}, \eqref{psi-out-equation}, \eqref{psi-out-grad-y-bound}, \eqref{psi-j2-elliptic-estimates}, and \eqref{form-of-phi-j2}, we can further rewrite \eqref{first-approximation-construction-22b} as
    \begin{align}
        &E_{j1}\left(\phi_{j},\psi_{j},\psi^{out};\xi\right)=-\varepsilon\nabla_{y}\left(U\left(y\right)+\phi_{j}(y)\right)\cdot\left(\dot{\tilde{\xi}}_{j}+\mathcal{N}_{j}\left(\xi_{*}\right)\left[\tilde{\xi}\right]\right)\nonumber\\
        &+\gamma\Gamma^{\gamma-1}_{+}\left(y\right)\mathcal{R}^{*}_{j1}\left(y,\xi\right)+\gamma\left(\gamma-1\right)\Gamma^{\gamma-2}_{+}\left(y\right)\mathcal{R}^{*}_{j2}\left(y,\xi\right)+\gamma\left(\gamma-1\right)\left(\gamma-2\right)\Gamma^{\gamma-3}_{+}\left(y\right)\mathcal{R}^{*}_{j3}\left(y,\xi\right),\label{first-approximation-construction-22c}
    \end{align}
    where, for $l=1,2,3$ and $j=1,2,3$
    \begin{align*}
        \left|\dell_{\Gamma}^{l}\left(\Gamma^{\gamma}_{+}\right)\mathcal{R}^{*}_{jl}\left(y,\xi\right)\right|&\leq\frac{C\varepsilon^{5}\dell_{\Gamma}^{l}\left(\Gamma^{\gamma}_{+}\right)}{t^{\frac{5}{2}}}.
    \end{align*}
    Finally, due to the fact that $\phi_{j2}$ is a sum of mode $\neq0$ terms from \eqref{psi-j2-modal-equations}--\eqref{form-of-phi-j2}, we can do a modal analysis similar to when constructing $\psi^{out}$ in Step III, and find that due \eqref{psi-j2-elliptic-estimates}, we have
    \begin{align}
        E_{2}^{out}=\sum_{j=1}^{3}\kappa_{j}\left(\psi_{j2}\Delta_{x}\eta_{j}+2\nabla_{x}\eta_{j}\cdot\nabla_{x}\psi_{j2}\right)=\bigO{\left(\frac{\varepsilon^{6}}{t^{4}}\right)}.\label{first-approximation-outer-error-E2out-final-bound}
    \end{align}
    Then \eqref{phi-j1-l-formula}--\eqref{first-approximation-outer-error-E2out-final-bound} give the Theorem.
    \end{proof}
    \begin{remark}\label{first-approximation-zero-mass}
        Noting that every term on the right hand side of \eqref{first-approximation-construction-22b} is supported on $B_{1}(0)$ in $y$ coordinates, and integrating both \eqref{first-approximation-construction-22b} and \eqref{first-approximation-construction-22c} on $B_{2}(0)$ in $y$ coordinates and equating, it is immediate that
        \begin{align}
            &\int_{B_{2}(0)}\left(\gamma\Gamma^{\gamma-1}_{+}\left(y\right)\mathcal{R}^{*}_{j1}+\gamma\left(\gamma-1\right)\Gamma^{\gamma-2}_{+}\left(y\right)\mathcal{R}^{*}_{j2}+\gamma\left(\gamma-1\right)\left(\gamma-2\right)\Gamma^{\gamma-3}_{+}\left(y\right)\mathcal{R}^{*}_{j3}\right) dy\nonumber\nonumber\\
            &=\varepsilon^{2}\int_{B_{2}(0)}\dell_{t}\left(\phi_{j2}+\phi_{j1}^{\left(3\right)}+\phi_{j1}^{\left(4\right)}\right)dy.\label{first-approximation-zero-mass-1}
        \end{align}
        Then, using \eqref{phi-j1-l-formula}, \eqref{psi-j2-modal-equations}, and \eqref{psi-j2-phi-j2-definitions}, we can see that the right hand side of \eqref{first-approximation-zero-mass-1} is an integral of a finite sum of mode $2,3,$ and $4$ terms, each supported on $B_{1}(0)$ and so this integral is immediately $0$. Thus
        \begin{align}
            \int_{B_{2}(0)}\left(\gamma\Gamma^{\gamma-1}_{+}\left(y\right)\mathcal{R}^{*}_{j1}+\gamma\left(\gamma-1\right)\Gamma^{\gamma-2}_{+}\left(y\right)\mathcal{R}^{*}_{j2}+\gamma\left(\gamma-1\right)\left(\gamma-2\right)\Gamma^{\gamma-3}_{+}\left(y\right)\mathcal{R}^{*}_{j3}\right) dy=0,\label{first-approximation-zero-mass-2}
        \end{align}
        a fact that we will use to make the construction of our solutions on $\left[T_{0},T\right]$ simpler.
    \end{remark}
\section{Constructing Solutions on $\left[T_{0},T\right]$}\label{constructing-full-solution-section}
Having constructed the first approximation $\left(\omega_{*},\Psi_{*}\right)$ of a solution to \eqref{2d-euler-vorticity-stream} on $\left[T_{0},T\right]$ in Theorem \ref{first-approximation-construction-theorem}, we now look for a solution on the same time interval to \eqref{2d-euler-vorticity-stream} of the form
\begin{align}
    \omega\left(x,t\right)&=\omega_{*}\left(x,t\right)+\phi_{*}\left(x,t\right),\label{omega-full-solution-decomposition}\\
    \Psi\left(x,t\right)&=\Psi_{*}\left(x,t\right)+\psi_{*}\left(x,t\right).\label{Psi-full-solution-decomposition}
\end{align}
We set
\begin{align*}
        \psi_{*}&=\sum_{j=1}^{3}\kappa_{j}\eta_{j}\psi_{*j}\left(\frac{x-\xi_{j}(t)}{\varepsilon},t\right)+\psi^{out}_*, \\
        \phi_{*}&=\varepsilon^{-2}\sum_{j=1}^{3}\kappa_{j}\phi_{*j}\left(\frac{x-\xi_{j}(t)}{\varepsilon},t\right),
\end{align*}
with $\eta_{j}$ defined in \eqref{cutoffs-definition-1}. Each $\phi_{*j}$ will have support contained in $B_{2\varepsilon}\left(\xi_{j}(t)\right)$. We define
\begin{align*}
    \psi_*^{in} = (\psi_{*1},\psi_{*2},\psi_{*3}), \quad \psi^{out}_*, \quad \phi_*^{in} = (\phi_{*1},\phi_{*2},\phi_{*3}).
\end{align*}
Finally we again have $\xi_{j}(t)$, $j=1,2,3$ as in \eqref{point-vortex-trajectory-function-decomposition}:
\begin{align*}
    \xi_{j}(t)=\xi_{*j}(t)+\tilde{\xi}_{j}(t),
\end{align*}
for $\xi_{*j}$ solving \eqref{KR}, and $\xi_{*j}$, and $\Tilde{\xi}_{j}$ satisfying \eqref{xi0-bound}--\eqref{tildexi-bound}. We also recall the definition of $\xi=\left(\xi_{1},\xi_{2},\xi_{3}\right)$ from \eqref{point-vortex-decomposition-vector-form}, with analogous definitions for $\xi_{*}$, $\tilde{\xi}$. 

\medskip
We also impose three orthogonality conditions on $\phi_{*j}$, $0$ total mass, and centre of mass at the origin:
\begin{align}
    \int_{\mathbb{R}^{2}}\phi_{*j}\ dy=0,\quad \int_{\mathbb{R}^{2}}y_{1}\phi_{*j}\ dy=0,\quad \int_{\mathbb{R}^{2}}y_{2}\phi_{*j}\ dy=0.\label{tilde-phi-star-R-0-mass-condition}
\end{align}
On $\mathbb{R}^{2}$, in $y$ variables, $-\Delta_{y}\psi_{*j}=\phi_{*j}$. That is,
\begin{align}
    \psi_{*j}\left(y,t\right)=-\frac{1}{2\pi}\int_{\mathbb{R}^{2}}\log{\left|z-y\right|}\ \phi_{*j}\left(z,t\right) dz.\label{psi-star-r-green-function-representation-upper-half-plane}
\end{align}
We let
\be 
\phi_{*} =\left(\phi_{*1},\phi_{*2},\phi_{*3}\right), \quad \psi_{*}=\left(\psi_{*1},\psi_{*2}, \psi_{*3}\right).\label{tilde-psi-star-definition}
\ee

\subsection{Homotopic Operators}\label{full-linearised-euler-operator-section}
In this section we define, with motivation, operators depending on a homotopy parameter $\lambda\in\left[0,1\right]$. At $\lambda=0$, these operators will be linear, and at $\lambda=1$, their annihilation will imply a solution to the Euler system \eqref{2d-euler-vorticity-stream}. To begin with, we insert $\left(\omega,\Psi\right)$, defined in \eqref{omega-full-solution-decomposition}--\eqref{Psi-full-solution-decomposition} into $E_{1}$ and $E_{2}$ defined in \eqref{first-approximation-error-calculation-1a}--\eqref{first-error-approximation-calculation-1b}:
\begin{align}\nonumber
E_{1}\left[\omega,\Psi\right]&=\dell_{t}\left(\omega_{*}+\phi_{*}\right)+\gradperp_{x}\left(\Psi_{*}+\psi_{*}\right)\cdot\nabla_{x}\left(\omega_{*}+\phi_{*}\right),
\\\nonumber
E_{2}\left[\omega,\Psi\right]&=\Delta_{x}\left(\Psi_{*}+\psi_{*}\right)+\left(\omega_{*}+\phi_{*}\right).
\end{align}
Similarly to \eqref{first-approximation-inner-error}--\eqref{first-approximation-outer-error-E2out-definition}, we have
\begin{align}\nonumber 
E_{1}\left[\omega,\Psi\right]&=\varepsilon^{-4}\sum_{j=1}^{3}\kappa_{j}E_{*j}\left(\phi_{*j},\psi_{*j},\psi_*^{out};\xi\right),\nonumber
\end{align}
and
\begin{align}
\nonumber E_{2}\left[\omega,\Psi\right]=E_{*}^{out}\left(\psi^{in}_{*}, \psi_{*}^{out};\xi\right),
\end{align}
where,
\begin{align} 
E_{*j}&=\varepsilon^{2}\dell_{t}\left(U_{j}+\phi_{j}+\phi_{*j}\right)
\label{homotopic-operator-inner-error-E-star-R-definition}
\\
&+\varepsilon^{2}\gradperp_{x}\left(\Psi_{0}+\kappa_{j}\psi_{j}+\kappa_{j}\psi_{*j}+\psi^{out}+\psi^{out}_{*}\right)\cdot\nabla_{x}\left(U_{j}+\phi_{j}+\phi_{*j}\right),\nonumber
\end{align}
\begin{align}
E_{*}^{out}&=\Delta_{x}\psi_*^{out}+\sum_{j=1}^{3}\kappa_{j}\left(\psi_{*j}\Delta_{x}\eta_{j}+2\nabla_{x}\eta_{j}\cdot\nabla_{x}\psi_{*j}\right)+E_{2}^{out}\left(\psi^{in},\psi^{out};\xi\right)\label{homotopic-operator-outer-error-E-star-out-definition}
\end{align}
with $\Psi_{0}$ defined in \eqref{psi-0-definition}, and $U_{j}$ defined in \eqref{gamma-j-u-j-definition}. Next, $\psi_{j}$, $\phi_{j}$, $j=1,2,3$, and $\psi^{out}$ constructed in the proof of Theorem \ref{first-approximation-construction-theorem}, and $E_{2}^{out}$ is given in \eqref{first-approximation-outer-error-E2out-final-bound}. We recall that the specific forms of $E_{j1}$ and $E_{2}^{out}$ given in \eqref{first-approximation-inner-error-ER1-definition}--\eqref{first-approximation-outer-error-E2out-definition} are due to the choice of $K$ and $T_{0}$ in \eqref{K-definition}--\eqref{T0-definition} implying $\eta_{j}\equiv1$ on $\supp{\phi_{j}}$ for $j=1,2,3$, $\eta_{k}\equiv0$ on $\supp{\phi_{j}}$, where $j,k=1,2,3$, $j\neq k$. Due to \eqref{tildexi-bound} , the supports of $\phi_{*j}$ will be close enough to the supports of the $U_{j}$ to imply the same behaviour, hence the specific forms of $E_{*j}$ and $E_{*}^{out}$ given in \eqref{homotopic-operator-inner-error-E-star-R-definition}--\eqref{homotopic-operator-outer-error-E-star-out-definition}.

\medskip
Now we consider
\begin{align}
\nonumber E_{*j}&=\varepsilon^{2}\dell_{t}\left(U_{j}+\phi_{j}+\phi_{*j}\right)\\
&+\varepsilon^{2}\gradperp_{x}\left(\Psi_{0}+\kappa_{j}\psi_{j}+\kappa_{j}\psi_{*j}+\psi^{out}+\psi^{out}_{*}\right)\cdot\nabla_{x}\left(U_{j}+\phi_{j}+\phi_{*j}\right).\nonumber
\end{align}
Regrouping terms, this becomes
\begin{align}
E_{*j}&=\varepsilon^{2}\dell_{t}\left(U_{j}+\phi_{j}\right)+\varepsilon^{2}\gradperp_{x}\left(\Psi_{0}+\kappa_{j}\psi_{j}+\psi^{out}\right)\cdot\nabla_{x}\left(U_{j}+\phi_{j}\right)+\varepsilon^{2}\dell_{t}\phi_{*j}\label{homotopic-operator-inner-error-E-star-R-definition-3}
\\
&+\varepsilon^{2}\gradperp_{x}\left(\Psi_{0}+\kappa_{j}\psi_{j}+\kappa_{j}\psi_{*j}+\psi^{out}+\psi^{out}_{*}\right)\cdot\nabla_{x}\phi_{*j}+\varepsilon^{2}\gradperp_{x}\left(\kappa_{j}\psi_{*j}+\psi^{out}_{*}\right)\cdot\nabla_{x}\left(U_{j}+\phi_{j}\right).\nonumber
\end{align}
Using Theorem \ref{first-approximation-construction-theorem}, in $y$ coordinates, \eqref{homotopic-operator-inner-error-E-star-R-definition-3} becomes
\begin{align}
    &E_{*j}=-\varepsilon\nabla_{y}\left(U+\phi_{j}\right)\cdot\left(\dot{\tilde{\xi}}_{j}+\mathcal{N}_{j}\left(\xi_{*}\right)\left[\tilde{\xi}\right]\right)+\gradperp_{y}\psi^{out}_{*}\cdot\nabla_{y}U\nonumber\\
    &+\gamma\Gamma^{\gamma-1}_{+}\left(y\right)\mathcal{R}^{*}_{j1}\left(y,\xi\right)+\gamma\left(\gamma-1\right)\Gamma^{\gamma-2}_{+}\left(y\right)\mathcal{R}^{*}_{j2}\left(y,\xi\right)+\gamma\left(\gamma-1\right)\left(\gamma-2\right)\Gamma^{\gamma-3}_{+}\left(y\right)\mathcal{R}^{*}_{j3}\left(y,\xi\right)\label{homotopic-operator-inner-error-E-star-R-definition-5}\\
    &+\gradperp_{y}\psi^{out}_{*}\cdot\nabla_{y}\phi_{j}+\varepsilon^{2}\dell_{t}\phi_{*j}\left(y,t\right)+\kappa_{j}\gradperp_{y}\psi_{*j}\cdot\nabla_{y}\left(U+\phi_{j}\right)\nonumber\\
    &+\gradperp_{y}\left(\Psi_{0}-\varepsilon \dot{\xi}\cdot y^{\perp}+\kappa_{j}\psi_{j}+\kappa_{j}\psi_{*j}+\psi^{out}+\psi^{out}_{*}\right)\cdot\nabla_{y}\phi_{*j}.\nonumber
\end{align}
where the $\mathcal{R}^{*}_{j}$ were constructed in the proof of Theorem \ref{first-approximation-construction-theorem}.

\medskip
We now concentrate on the last four terms of \eqref{homotopic-operator-inner-error-E-star-R-definition-5}.  These terms can be written as
\begin{align}
    &\varepsilon^{2}\dell_{t}\phi_{*j}\left(y,t\right)+\kappa_{j}\gradperp_{y}\psi_{*j}\cdot\nabla_{y}\left(U+\phi_{j}\right)+\gradperp_{y}\psi^{out}_{*}\cdot\nabla_{y}\phi_{j}\label{homotopic-operator-inner-error-E-star-R-definition-7}\\
    &+\gradperp_{y}\left(\Psi_{0}-\varepsilon \dot{\xi}\cdot y^{\perp}+\kappa_{j}\psi_{j}+\kappa_{j}\psi_{*j}+\psi^{out}+\psi^{out}_{*}\right)\cdot\nabla_{y}\phi_{*j}\nonumber
\end{align}
We make the decomposition
\begin{align}
    &\Psi_{0}-\varepsilon \dot{\xi}\cdot y^{\perp}+\kappa_{j}\psi_{j}+\kappa_{j}\psi_{*j}+\psi^{out}+\psi^{out}_{*}\label{homotopic-operator-inner-error-E-star-R-definition-10}\\
    &=\kappa_{j}\Gamma(y)+\underbrace{\sum_{i\neq j}\kappa_{i}\Gamma\left(y+\frac{\xi_{j}(t)-\xi_{i}(t)}{\varepsilon}\right)-\varepsilon \dot{\xi}\cdot y^{\perp}+\kappa_{j}\psi_{j}+\kappa_{j}\psi_{*j}+\psi^{out}+\psi^{out}_{*}}_{\mathfrak{a}_{j}}.\nonumber
\end{align}
Given that $U\left(y\right)$ is a function of $\Gamma(y)$, we define $\mathscr{E}_{j,\lambda}\left(\phi_{*j},\psi^{out}_{*},\Tilde{\xi}\right)$ by
\begin{align}
    &\mathscr{E}_{j,\lambda}\left(\phi_{*j},\psi^{out}_{*},\Tilde{\xi}\right)\coloneqq\varepsilon^{2}\dell_{t}\phi_{*j}\left(y,t\right)+\kappa_{j}\gradperp_{y}\psi_{*j}\cdot\nabla_{y}\left(U+\lambda\phi_{j}\right)\label{homotopic-operator-inner-error-main-definition}\\
    &+\gradperp_{y}\left(\kappa_{j}\Gamma+\lambda\mathfrak{a}_{j}\right)\cdot\nabla_{y}\phi_{*j}-\varepsilon\nabla_{y}\left(U+\phi_{j}\right)\cdot\left(\dot{\tilde{\xi}}_{j}+\mathcal{N}_{j}\left(\xi_{*}\right)\left[\tilde{\xi}\right]\right)\nonumber\\
    &+\lambda\tilde{\mathscr{E}}_{j}\left(\psi_{*j},\psi^{out}_{*},\Tilde{\xi}\right)+\gradperp_{y}\psi^{out}_{*}\cdot\nabla_{y}U\nonumber
\end{align}
where
\begin{align}
    \tilde{\mathscr{E}}_{j}\left(\psi_{*j},\psi^{out}_{*},\Tilde{\xi}\right)&=\gradperp_{y}\psi^{out}_{*}\cdot\nabla_{y}\phi_{j}+\gamma\Gamma^{\gamma-1}_{+}\left(y\right)\mathcal{R}^{*}_{j1}\left(y,\xi\right)+\gamma\left(\gamma-1\right)\Gamma^{\gamma-2}_{+}\left(y\right)\mathcal{R}^{*}_{j2}\left(y,\xi\right)\label{homotopic-operator-inner-error-main-definition-lower-order-terms}\\
    &+\gamma\left(\gamma-1\right)\left(\gamma-2\right)\Gamma^{\gamma-3}_{+}\left(y\right)\mathcal{R}^{*}_{j3}\left(y,\xi\right).\nonumber
\end{align}
We next define
\begin{align} 
    \mathscr{E}_{\lambda}^{out}\left(\tilde{\psi},\psi^{out}_{*},\tilde{\xi}\right)&=\Delta_{x}\psi_{*}^{out}+\lambda\sum_{j=1}^{3}\kappa_{j}\left(\psi_{*j}\Delta_{x}\eta_{j}+2\nabla_{x}\eta_{j}\cdot\nabla_{x}\psi_{*j}\right)+\lambda E_{2}^{out}\left(\psi^{in},\psi^{out};\xi\right).\label{homotopic-operator-outer-error-main-definition}
\end{align}
We note that at $\lambda=1$, we recover \eqref{homotopic-operator-inner-error-E-star-R-definition-5} and \eqref{homotopic-operator-outer-error-E-star-out-definition} respectively:
\begin{align}
    \mathscr{E}_{j,1}\left(\phi_{*j},\psi^{out}_{*},\Tilde{\xi}\right)&=E_{*j}\left(\phi_{j},\phi_{*j},\psi_{j},\psi_{*j},\psi^{out}, \psi_{*}^{out};\xi\right),\label{homotopic-operator-inner-error-lambda-1-equivalence}\\
    \mathscr{E}_{1}^{out}\left(\tilde{\psi},\psi^{out}_{*},\tilde{\xi}\right)&=E_{*}^{out}\left(\psi^{in},\psi^{in}_{*},\psi^{out}, \psi_{*}^{out};\xi\right).\label{homotopic-operator-outer-error-lambda-1-equivalence}
\end{align}
Thus a solution to the system \eqref{2d-euler-vorticity-stream} is constructed if we find  $\left(\phi_{*j},\psi^{out}_{*},\Tilde{\xi}\right)$ that make the quantities stated in \eqref{homotopic-operator-inner-error-lambda-1-equivalence}--\eqref{homotopic-operator-outer-error-lambda-1-equivalence} equal to $0$. We will do this by a continuation argument that involves finding a priori estimates along the deformation parameter $\lambda$ for the equations one obtains by setting \eqref{homotopic-operator-inner-error-main-definition} and \eqref{homotopic-operator-outer-error-main-definition} equal to $0$, in addition imposing suitable bounds for the parameter functions for $t\in\left[T_{0},T\right]$.

\medskip
Then, given $\left(\psi^{out}_{*},\Tilde{\xi}\right)$, we require that $\phi_{*j}$ satisfies the terminal value problem on $\mathbb{R}^{2}\times\left[T_{0},T\right]$,
\begin{subequations}\label{tilde-phi-star-R-initial-value-problem}
\begin{align}
&\mathscr{E}_{j,\lambda}\left(\phi_{*j},\psi^{out}_{*},\Tilde{\xi}\right)=c_{j1}\left(t\right)\dell_{1}U(y)+c_{j2}\left(t\right)\dell_{2}U(y)\label{tilde-phi-star-R-initial-value-problem-equation}\\
&\phi_{*j}(\blank,T)=0\ \ \text{in}\ \mathbb{R}^{2},\label{tilde-phi-star-R-initial-value-problem-initial-data}\\
&\phi_{*j}\to0,\ \left|y\right|\to\infty,\ \forall t\in\left[T_{0},T\right],\label{tilde-phi-star-R-initial-value-problem-boundary-condition}
\end{align}
\end{subequations}
where we recall that $T_{0}$ and $T$ are defined in \eqref{T0-definition}--\eqref{T-definition-2}. Upon integrating against $1$, $y_{1}$, and $y_{2}$ in turn, for $\phi_{*j}$ satisfying \eqref{tilde-phi-star-R-initial-value-problem}, we can derive explicit formulae for the $c_{jl}$ that depend linearly on $\phi_{*j}$ using integration by parts.

\medskip
Given a solution to \eqref{tilde-phi-star-R-initial-value-problem}, $\mathscr{E}_{j,\lambda}$ will be annihilated once we impose, for $l=1,2$, the terminal value problem
\begin{align}
    c_{jl}\left(\phi_{*j},\psi^{out}_{*},\Tilde{\xi},\lambda\right)=0\ \ \textrm{for all}\ t\in\left[T_{0},T\right],\label{cRj-initial-value-problem}
\end{align}
with $\Tilde{\xi}_{j}$ satisfying
\begin{align}
    \|t^{\frac{3}{2}}\tilde{\xi}_{j}\|_{\left[T_{0},T\right]}+\|t^{\frac{5}{2}}\dot{\tilde{\xi}}_{j}\|_{\left[T_{0},T\right]}\leq\varepsilon^{4-\sigma},\ \ \tilde{\xi}_{j}\left(T\right)=0\label{p-tilde-bound-2}
\end{align}
for some small $\sigma>0$. We also impose that $\psi^{out}_{*}$, in $x$ variables, solves the boundary value problem
\begin{align}
    \mathscr{E}_{\lambda}^{out}\left(\tilde{\psi},\psi^{out}_{*},\tilde{\xi}\right)&=0\ \ \text{in}\ \mathbb{R}^{2}\times \left[T_{0},T\right],\label{psi-out-star-boundary-value-problem-equation}
\end{align}
which we interpret to be, for all $t\in\left[T_{0},T\right]$,
\begin{align*}
    -\Delta_{x}\psi^{out}_{*}=\lambda\sum_{j=1}^{3}\kappa_{j}\left(\psi_{*j}\Delta_{x}\eta_{j}+2\nabla_{x}\eta_{j}\cdot\nabla_{x}\psi_{*j}\right)+\lambda E_{2}^{out}\left(\psi^{in},\psi^{out};\xi\right),
\end{align*}
on $\mathbb{R}^{2}$, where we note the source term on the right hand side has compact support for every fixed $t$.

\medskip
To construct a solution to \eqref{2d-euler-vorticity-stream} via the above scheme, we consider the vector of parameter functions $\mathfrak{P}=\left(\phi_{*}, \psi^{out}_{*},\Tilde{\xi}\right)$,  belonging to a Banach space $\left(\mathfrak{X},\left\|\cdot\right\|_{\mathfrak{X}}\right)$ and reformulate the equations \eqref{psi-star-r-green-function-representation-upper-half-plane}, \eqref{tilde-phi-star-R-initial-value-problem}--\eqref{psi-out-star-boundary-value-problem-equation} as a fixed point problem of the form
\begin{align}
    \mathcal{T}\left(\mathfrak{P},\lambda\right)=\mathfrak{P},\ \ \mathfrak{P}\in\mathscr{O},\label{full-solution-construction-fixed-point-formulation}
\end{align}
where $\mathscr{O}$ is a bounded open set in $\mathfrak{X}$, and $\mathcal{T}\left(\cdot,\lambda\right)$ is a homotopy of nonlinear compact operators with $\mathcal{T}\left(\cdot,0\right)$ being linear.

\medskip
For a suitable choice of $\mathfrak{X}$ and $\mathscr{O}$, we will show that for all $\lambda\in[0,1]$, no solution $\mathcal{Z}\in\dell\mathscr{O}$ exists. Existence of a solution to \eqref{full-solution-construction-fixed-point-formulation} at $\lambda=1$ will then follow by standard degree theory, which, as stated, will give a solution to \eqref{2d-euler-vorticity-stream}. The choices of $\mathfrak{X}$ and $\mathscr{O}$ will also give the desired properties we claimed the solution would possess in Theorem \ref{teo1}.
\begin{remark}\label{no-alpha-needed-remark}
    Note that unlike \cite{DDMW2020,DDMP2VP2023}, we do not need an ansatz of the form $\phi_{*j}=\tilde{\phi}_{*j}+\alpha_{j}(t)U$ to construct a solution. Equivalently, the projected linear problem \eqref{tilde-phi-star-R-initial-value-problem} does not require a term on the right hand side related to the mass.

    \medskip
    Indeed, if we integrate \eqref{homotopic-operator-inner-error-main-definition}, using \eqref{first-approximation-zero-mass-1}--\eqref{first-approximation-zero-mass-2} from Remark \ref{first-approximation-zero-mass}, and \eqref{homotopic-operator-inner-error-main-definition-lower-order-terms}, we obtain
    \begin{align*}
       \int_{\mathbb{R}^{2}}\mathscr{E}_{j,\lambda}dx=\int_{\mathbb{R}^{2}}\varepsilon^{2}\dell_{t}\phi_{*j}\ dx. 
    \end{align*}
    The reason this is the case is that unlike in \cite{DDMW2020}, we can construct the first approximation purely elliptically, and unlike in \cite{DDMP2VP2023}, the construction of the first approximation relies on inverting an operator coming from linearising around a radial profile. Both of these facts give us precise control over the modes of the first approximation which lets us obtain \eqref{first-approximation-zero-mass-2}, the key fact leading to this simplification.
\end{remark}
\medskip
We now move on to a priori estimates for the system  \eqref{psi-star-r-green-function-representation-upper-half-plane}, \eqref{tilde-phi-star-R-initial-value-problem}--\eqref{psi-out-star-boundary-value-problem-equation}. 

\subsection{Weighted $L^{2}$ Function Space}

When proving a priori estimates for some linearised operators, it will be convenient to have shorthand for several quantities we define in this section. Firstly, let $\eta_{0}(r)$ once again be the smooth cutoff that is identically $1$ for $r\leq1$ and identically $0$ for $r\geq2$. Define the interior cutoff, $\eta_{\mathfrak{i}}$, by
\begin{align}
    \eta_{\mathfrak{i}}\left(y,t\right)=1-\eta_{0}\left(\left(\frac{t}{\varepsilon}\right)^{\frac{\gamma}{4}}U(y)\right),\label{weighted-L2-interior-cutoff}
\end{align}
Then $\eta_{\mathfrak{i}}$ is supported on the region 
\begin{align}
    U(y)\geq2\left(\frac{\varepsilon}{t}\right)^{\frac{\gamma}{4}} \iff \Gamma(y)\geq2^{\frac{1}{\gamma}}\left(\frac{\varepsilon}{t}\right)^{\frac{1}{4}}.\label{weighted-L2-interior-cutoff-support}
\end{align}
For future convenience we also define the set
\begin{align}
    \mathfrak{W}=\left\{y:U(y)\geq\left(\frac{\varepsilon}{t}\right)^{\frac{\gamma}{4}}\right\}.\label{approximate-vorticity-lower-bound-region}
\end{align}
We next define $\eta_{1}(r)$ to be the smooth cutoff function that is identically $1$ for $r\leq3$ and identically $0$ for $r\geq4$. We let the exterior cutoff, $\eta_{\mathfrak{e}}$, be defined by
\begin{align}
    \eta_{\mathfrak{e}}\left(y,t\right)=\twopartdef{\eta_{1}\left(\left(\frac{t}{\varepsilon}\right)^{\frac{\gamma}{4}}U(y)\right)}{y\in B_{1}(0)}{\eta_{0}\left(\frac{\left|y\right|}{4}\right)}{y\in B_{1}(0)^{\mathsf{c}}}.\label{weighted-L2-exterior-cutoff}
\end{align}
We note the two functions that are glued to construct $\eta_{\mathfrak{e}}$ match at the boundary of their domains of definition as they are both identically $1$ on a region around said boundary. We note that $\eta_{\mathfrak{e}}$ is supported on the region
 \begin{align}
U(y)\leq 4\left(\frac{\varepsilon}{t}\right)^{\frac{\gamma}{4}}.\label{weighted-L2-exterior-cutoff-support}
 \end{align}
It is clear that
 \begin{align}
1\leq\eta_{\mathfrak{i}}+\eta_{\mathfrak{e}}&\leq2\ \ 
\mathrm{on}\ B_{4}\left(0\right),\label{interior-exterior-cutoff-sum-bounds}\\ 
\nonumber
\left|\grad_{y}\eta_{\mathfrak{i}}\right|+\left|\grad_{y}\eta_{\mathfrak{e}}\right|&\leq C\left(\frac{t}{\varepsilon}\right)^{\frac{1}{4}},
 \end{align}
 some absolute constant $C>0$. Moreover, using the definitions \eqref{weighted-L2-interior-cutoff} and \eqref{weighted-L2-exterior-cutoff}, as well as the bounds \eqref{xi0-bound}--\eqref{tildexi-bound}, we have that $\supp{\dell_{t}\eta_{\mathfrak{i}}}\cup\supp{\nabla_{y}\eta_{\mathfrak{i}}}\subset\supp{\eta_{\mathfrak{e}}}$, and that on the supports of $\dell_{t}\eta_{\mathfrak{i}}$ and $\nabla_{y}\eta_{\mathfrak{i}}$,
 \begin{align}
    \eta_{\mathfrak{i}}\leq\eta_{\mathfrak{e}},\quad \left|\dell_{t}\eta_{\mathfrak{i}}\right|\leq \frac{C\eta_{\mathfrak{e}}}{t},\quad \left|\nabla_{y}\eta_{\mathfrak{i}}\right|\leq C\eta_{\mathfrak{e}}.\label{derivative-of-inner-cutoff-outer-cutoff-comparison-bound-1}
 \end{align} 
 With the definitions \eqref{weighted-L2-interior-cutoff} and \eqref{weighted-L2-exterior-cutoff} in hand we define, for $1\leq r\leq\infty$, the space $\mathscr{Y}_{r}$ as all functions on $\mathbb{R}^{2}$ with
 \begin{align}
     \left\|\Phi\right\|_{\mathscr{Y}_{r}}\coloneqq \|\left(\gamma\Gamma^{\gamma-1}_{+}\right)^{\frac{1}{r}-1}\eta_{\mathfrak{i}}\Phi\|_{L^{r}\left(\mathbb{R}^{2}\right)}+\left(\frac{t}{\varepsilon}\right)^{\left(\frac{\gamma-1}{4}\right)\left(1-\frac{1}{r}\right)}\left\|\eta_{\mathfrak{e}}\Phi\right\|_{L^{r}\left(\mathbb{R}^{2}\right)}<\infty.\label{weighted-Lr-norm-definition}
 \end{align}
 We have that if $\supp\Phi\subset B_{4}\left(0\right)$, then by \eqref{interior-exterior-cutoff-sum-bounds} we have
 \begin{align}
     \left\|\Phi\right\|_{L^{r}\left(\mathbb{R}^{2}\right)}\leq C\left\|\Phi\right\|_{\mathscr{Y}_{r}}.\label{weighted-Lr-norm-bounds-Lr-norm}
 \end{align}
Now we record some inequalities related to the Poisson equation $-\Delta_{y}\psi_{*j}=\phi_{*j}$ that we will need later on.
\subsection{The Poisson Equation}\label{poisson-equation-section}
We would like to prove effective bounds on $\bar{\psi}$ on $\mathbb{R}^{2}$ to $\bar{\phi}$ by $-\Delta_{y}\bar{\psi}=\bar{\phi}$, so that
\begin{align}
\bar{\psi}\left(y\right)=-\frac{1}{2\pi}\int_{\mathbb{R}^{2}}\log{\left|z-y\right|}\ \bar{\phi}\left(z\right) dz,
\label{bar-psi-green-function-representation-upper-half-plane}
\end{align}
with the assumptions
\begin{align}
    \supp\bar{\phi}\subset B_{2}\left(0\right),\ \ \ \left\|\bar{\phi}\right\|_{L^{2}\left(\mathbb{R}^{2}\right)}<\infty,\ \ \ \int_{\mathbb{R}^{2}}\bar{\phi}\left(y\right) dy=0,\label{bar-phi-support-weighted-L2-space-and-mass-condition}
\end{align}
Note that \eqref{bar-phi-support-weighted-L2-space-and-mass-condition} implies for $1\leq r\leq\infty$
\begin{align}\nonumber   \left\|\bar{\phi}\right\|_{L^{r}\left(\mathbb{R}^{2}\right)}\leq C\left\|\bar{\phi}\right\|_{\mathscr{Y}_{r}}.
\end{align}
To help state the bounds we will need, we recall the definition of the H\"older seminorm of order $\beta$ for some $\beta\in\left(0,1\right)$:
\begin{align}
\nonumber
\left[F\right]_{\beta}\left(y\right)\coloneqq\sup_{y_{1},y_{2}\in B_{1}\left(y\right)}\frac{\left|F\left(y_{1}\right)-F\left(y_{2}\right)\right|}{\left|y_{1}-y_{2}\right|^{\beta}}.
\end{align}
Then the following result holds.
\begin{lemma}\label{poisson-equation-zero-mass-estimates-lemma}
Let $\bar{\phi}$ satisfy \eqref{bar-phi-support-weighted-L2-space-and-mass-condition}, and let $\bar{\psi}$ depend on $\bar{\phi}$ by \eqref{bar-psi-green-function-representation-upper-half-plane}. Suppose further that $\bar{\phi}\in L^{r_{0}}\left(\mathbb{R}^{2}\right)$, some $r_{0}>2$. Then for all $y\in\mathbb{R}^{2}$ and $\infty> r_{1}>2$, and some $\beta\in\left(0,1\right)$ we have
\begin{align}
\left|\bar{\psi}\left(y\right)\right|&\leq C\left\|\bar{\phi}\right\|_{L^{2}\left(\mathbb{R}^{2}\right)},\label{poisson-equation-zero-mass-estimates-lemma-statement-1}
\\    \left\|\nabla\bar{\psi}\left(y\right)\right\|_{L^{r_{1}}\left(\mathbb{R}^{2}\right)}&\leq C\left\|\bar{\phi}\right\|_{L^{2}\left(\mathbb{R}^{2}\right)},\label{poisson-equation-zero-mass-estimates-lemma-statement-2}
\\
\left|\nabla\bar{\psi}\left(y\right)\right|+\left[\nabla\bar{\psi}\right]_{\beta}\left(y\right)&\leq C\left\|\bar{\phi}\right\|_{L^{r_{0}}\left(\mathbb{R}^{2}\right)},
\label{poisson-equation-zero-mass-estimates-lemma-statement-3}
\end{align}
where $C>0$ is an absolute constant.
\end{lemma}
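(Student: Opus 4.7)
\textbf{Proof proposal for Lemma \ref{poisson-equation-zero-mass-estimates-lemma}.} The plan is to exploit the zero-mass and compact-support hypotheses in \eqref{bar-phi-support-weighted-L2-space-and-mass-condition} to subtract a constant inside the convolution kernel and gain decay at infinity, then handle the bounded region by standard singular-integral and Sobolev estimates. Throughout I will split each estimate into a region $|y|\leq 4$ (where the potential is controlled by local $L^p$ estimates of the kernel) and a region $|y|\geq 4$ (where the zero-mass cancellation gives polynomial decay).

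For \eqref{poisson-equation-zero-mass-estimates-lemma-statement-1}, since $\int \bar\phi = 0$ I rewrite
\[
\bar\psi(y) \;=\; -\frac{1}{2\pi}\int_{B_{2}(0)}\bigl(\log|z-y|-\log\langle y\rangle\bigr)\bar\phi(z)\,dz,
\]
for a suitable reference quantity $\langle y\rangle$. On $|y|\leq 4$ I bound $|\bar\psi(y)|$ by H\"older's inequality using that $z\mapsto \log|z-y|$ lies in $L^{2}(B_{6}(0))$ uniformly in $y$. On $|y|\geq 4$ the kernel $\log|z-y|-\log|y|=\tfrac12\log(1-2z\cdot y/|y|^2+|z|^2/|y|^2)=O(|z|/|y|)$ for $z\in B_2(0)$, and then Cauchy--Schwarz together with $|B_2(0)|<\infty$ gives $|\bar\psi(y)|\leq C|y|^{-1}\|\bar\phi\|_{L^2}$. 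Combining the two regions yields the uniform bound.

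For \eqref{poisson-equation-zero-mass-estimates-lemma-statement-2}, differentiating the representation gives
\[
\nabla\bar\psi(y) \;=\; -\frac{1}{2\pi}\int_{B_2(0)}\frac{y-z}{|y-z|^2}\,\bar\phi(z)\,dz.
\]
On $|y|\leq 8$ I invoke Calder\'on--Zygmund together with Sobolev embedding: $\bar\phi\in L^2$ with compact support implies $\nabla^2\bar\psi\in L^2$, hence $\nabla\bar\psi\in W^{1,2}\hookrightarrow L^{r_1}_{\mathrm{loc}}$ for any finite $r_1>2$. On $|y|\geq 8$ I again subtract $y/|y|^2$ using $\int\bar\phi=0$ to write
\[
\nabla\bar\psi(y) \;=\; -\frac{1}{2\pi}\int_{B_2(0)}\left(\frac{y-z}{|y-z|^2}-\frac{y}{|y|^2}\right)\bar\phi(z)\,dz,
\]
whose integrand is $O(|z|/|y|^2)$; Cauchy--Schwarz gives $|\nabla\bar\psi(y)|\leq C|y|^{-2}\|\bar\phi\|_{L^2}$, which lies in $L^{r_1}(\{|y|\geq 8\})$ for every $r_1>1$. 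Adding both contributions yields \eqref{poisson-equation-zero-mass-estimates-lemma-statement-2}.

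For \eqref{poisson-equation-zero-mass-estimates-lemma-statement-3}, the pointwise bound on $|\nabla\bar\psi(y)|$ follows by H\"older with conjugate exponent $r_0'=r_0/(r_0-1)<2$, since $z\mapsto |y-z|^{-1}$ is in $L^{r_0'}(B_6(0))$ uniformly in $y\in B_8(0)$, while for $|y|\geq 8$ the cancellation argument above again gives $O(|y|^{-2})$ decay. The H\"older seminorm estimate on $\nabla\bar\psi$ is a classical Schauder-type bound for the Newtonian potential with $L^{r_0}$ data, $r_0>2$, yielding H\"older continuity of exponent $\beta=1-2/r_0$; one obtains it by splitting, for $y_1,y_2\in B_1(y)$, the domain into $|z-y|\leq 2|y_1-y_2|$ and its complement, and estimating each piece by H\"older. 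I do not expect any serious obstacle here: the only point to watch is the interface between the two regions in $y$ when combining the decay estimate at infinity with the Schauder bound; a fixed cutoff at $|y|=8$ and the boundedness of $\supp\bar\phi$ make both pieces uniformly controlled.
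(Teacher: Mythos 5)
The paper does not actually prove this lemma --- it states that the inequalities are standard elliptic estimates and omits the argument --- so your proposal is supplying a proof where the paper gives none. Your argument is the standard one and is correct: the zero-mass subtraction of $\log|y|$ (resp.\ of $y/|y|^{2}$) for $|y|$ large, H\"older/Cauchy--Schwarz against the locally $L^{p}$ kernels on the bounded region, Calder\'on--Zygmund plus Sobolev embedding for \eqref{poisson-equation-zero-mass-estimates-lemma-statement-2}, and the classical kernel-splitting Schauder estimate with $\beta=1-2/r_{0}$ for \eqref{poisson-equation-zero-mass-estimates-lemma-statement-3}. The only step worth making explicit is that the Sobolev embedding $W^{1,2}(B_{8})\hookrightarrow L^{r_{1}}(B_{8})$ requires a bound on $\|\nabla\bar{\psi}\|_{L^{2}(B_{8})}$ as well as on $\|\nabla^{2}\bar{\psi}\|_{L^{2}}$; this follows from Young's inequality, since on $B_{8}$ one has $\nabla\bar{\psi}=(K\chi_{B_{10}})*\bar{\phi}$ with $K(x)=-\frac{1}{2\pi}\frac{x}{|x|^{2}}$ and $K\chi_{B_{10}}\in L^{1}$.
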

The proof for the inequalities in Lemma \ref{poisson-equation-zero-mass-estimates-lemma} are standard elliptic estimates and we omit them. If estimates \eqref{poisson-equation-zero-mass-estimates-lemma-statement-1}, \eqref{poisson-equation-zero-mass-estimates-lemma-statement-2}, and \eqref{poisson-equation-zero-mass-estimates-lemma-statement-3} hold, and moreover $\bar{\phi}\in L^{\infty}\left(\mathbb{R}^{2}\right)$, we have the following interpolation estimate.
\begin{lemma}\label{phi-interpolation-estimate-lemma}Suppose $\bar{\phi}$ satisfies \eqref{bar-phi-support-weighted-L2-space-and-mass-condition}, and in addition, $\bar{\phi}\in L^{\infty}\left(\mathbb{R}^{2}\right)$. Then given $\sigma\in\left(0,1\right)$, there exist numbers $C_{\sigma}>0$ and $\beta\in\left(0,1\right)$ such that
\begin{align}
    \left|\bar{\psi}\left(y\right)\right|+\left|\nabla\bar{\psi}\left(y\right)\right|+\left[\nabla\bar{\psi}\right]_{\beta}\left(y\right)\leq C_{\sigma}\left\|\bar{\phi}\right\|_{L^{\infty}\left(\mathbb{R}^{2}\right)}^{\sigma}\left\|\bar{\phi}\right\|_{L^{2}\left(\mathbb{R}^{2}\right)}^{1-\sigma}.\label{poisson-equation-zero-mass-interpolation-estimate-lemma-statement}
\end{align}
\end{lemma}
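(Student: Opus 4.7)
The plan is to interpolate directly between the bounds already provided by Lemma \ref{poisson-equation-zero-mass-estimates-lemma}, using the compact support of $\bar{\phi}$ to bridge the $L^2$ and $L^\infty$ scales. Since every inequality we need is already sharp in the relevant norms, the proof should reduce to choosing the auxiliary exponent $r_0$ as a function of $\sigma$, applying the standard log-convexity inequality for $L^p$ norms, and packaging the result.

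For the pointwise bound on $\bar{\psi}$, I would start from \eqref{poisson-equation-zero-mass-estimates-lemma-statement-1}, which gives $|\bar{\psi}(y)|\le C\|\bar{\phi}\|_{L^2}$, and split $\|\bar{\phi}\|_{L^2} = \|\bar{\phi}\|_{L^2}^{1-\sigma}\,\|\bar{\phi}\|_{L^2}^{\sigma}$. Since $\supp\bar{\phi}\subset B_2(0)$, H\"older's inequality gives $\|\bar{\phi}\|_{L^2}\le |B_2(0)|^{1/2}\|\bar{\phi}\|_{L^\infty}$, and therefore
\[
|\bar{\psi}(y)|\le C\|\bar{\phi}\|_{L^2}^{1-\sigma}\|\bar{\phi}\|_{L^2}^{\sigma}\le C_\sigma\|\bar{\phi}\|_{L^\infty}^{\sigma}\|\bar{\phi}\|_{L^2}^{1-\sigma}.
\]

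For the gradient and H\"older seminorm terms, the natural choice is \eqref{poisson-equation-zero-mass-estimates-lemma-statement-3} with $r_0$ tuned to $\sigma$: take $r_0 = 2/(1-\sigma)$, which satisfies $r_0>2$ exactly because $\sigma\in(0,1)$. By the standard interpolation inequality for Lebesgue norms on the compactly supported function $\bar{\phi}$,
\[
\|\bar{\phi}\|_{L^{r_0}}\le \|\bar{\phi}\|_{L^2}^{2/r_0}\,\|\bar{\phi}\|_{L^\infty}^{1-2/r_0}=\|\bar{\phi}\|_{L^2}^{1-\sigma}\,\|\bar{\phi}\|_{L^\infty}^{\sigma}.
\]
Plugging this into \eqref{poisson-equation-zero-mass-estimates-lemma-statement-3} immediately yields
\[
|\nabla\bar{\psi}(y)|+[\nabla\bar{\psi}]_\beta(y)\le C_\sigma \|\bar{\phi}\|_{L^\infty}^{\sigma}\|\bar{\phi}\|_{L^2}^{1-\sigma},
\]
for the corresponding H\"older exponent $\beta\in(0,1)$ produced by the elliptic estimate for exponent $r_0>2$ (concretely, one may take $\beta = 1-2/r_0 = \sigma$). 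Summing the three bounds gives \eqref{poisson-equation-zero-mass-interpolation-estimate-lemma-statement}.

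There is essentially no obstacle here: the only point requiring mild care is ensuring the auxiliary exponent $r_0$ chosen to realise the interpolation is strictly greater than $2$ so that Lemma \ref{poisson-equation-zero-mass-estimates-lemma} applies, and this is guaranteed by the assumption $\sigma\in(0,1)$. The compact support of $\bar{\phi}$ is what allows us to compare $L^2$ and $L^\infty$ directly and is used implicitly both in the H\"older step for $|\bar{\psi}|$ and in the validity of the $L^{r_0}$ interpolation with finite constants.
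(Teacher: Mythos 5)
Your argument is correct and is exactly the standard interpolation the paper has in mind (the paper omits the proof of this lemma, just as it omits the proof of Lemma \ref{poisson-equation-zero-mass-estimates-lemma}): bound $|\bar{\psi}|$ via \eqref{poisson-equation-zero-mass-estimates-lemma-statement-1} together with $\|\bar{\phi}\|_{L^{2}}\leq C\|\bar{\phi}\|_{L^{\infty}}$ from the compact support, and bound $|\nabla\bar{\psi}|+[\nabla\bar{\psi}]_{\beta}$ via \eqref{poisson-equation-zero-mass-estimates-lemma-statement-3} with $r_{0}=2/(1-\sigma)>2$ and Lebesgue interpolation $\|\bar{\phi}\|_{L^{r_{0}}}\leq\|\bar{\phi}\|_{L^{2}}^{1-\sigma}\|\bar{\phi}\|_{L^{\infty}}^{\sigma}$. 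The bookkeeping of the exponents and of $\beta$ is accurate, so nothing is missing.
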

\subsection{Lower Bound on a Quadratic Form}
We now consider functions $\bar{\phi}$ on the upper half plane $\mathbb{R}^{2}$ once again satisfying the assumptions 
\begin{align}
    \supp\bar{\phi}\subset B_{2}\left(0\right),\ \ \ \left\|\bar{\phi}\right\|_{L^{2}\left(\mathbb{R}^{2}\right)}<\infty,\ \ \ \int_{\mathbb{R}^{2}}\bar{\phi}\left(y\right) dy=0,\label{bar-phi-support-weighted-L2-space-and-mass-condition-2}
\end{align}
as well as the centre of mass conditions
\begin{align}
    \int_{\mathbb{R}^{2}}y_{1}\bar{\phi}\left(y\right) dy=0,\ \ \ \int_{\mathbb{R}^{2}}y_{2}\bar{\phi}\left(y\right) dy=0.\label{bar-phi-centre-of-mass-conditions}
\end{align}
If $\bar{\phi}$ satisfies the above conditions, we gain the following crucial estimate. Recall the definitions of $\eta_{\mathfrak{i}}$, $\eta_{\mathfrak{e}}$, and $\mathfrak{W}$ from \eqref{weighted-L2-interior-cutoff}--\eqref{weighted-L2-exterior-cutoff}. 
\begin{lemma}\label{quadratic-form-estimate-lemma}
    Suppose $\bar{\phi}$ satisfies \eqref{bar-phi-support-weighted-L2-space-and-mass-condition-2}--\eqref{bar-phi-centre-of-mass-conditions}, and $\xi_{j}$ satisfies \eqref{xi0-bound}--\eqref{tildexi-bound}. Then for all $t\in\left[T_{0},T\right]$ there exists an absolute constant $C_{0}>0$ such that
    \begin{align}
        \int_{\mathbb{R}^{2}}\frac{\eta_{\mathfrak{i}}\bar{\phi}}{\gamma\Gamma^{\gamma-1}_{+}}\left(\eta_{\mathfrak{i}}\bar{\phi}-\gamma\Gamma^{\gamma-1}_{+}\left(-\Delta_{y}\right)^{-1}\left(\eta_{\mathfrak{i}}\bar{\phi}\right)\right)\geq C_{0}\int_{\mathbb{R}^{2}}\frac{\eta_{\mathfrak{i}}^{2}\left|\bar{\phi}\right|^{2}}{\gamma\Gamma^{\gamma-1}_{+}}+\mathcal{R}_{1},\label{quadratic-form-estimate-lemma-statement-1}
    \end{align}
    with
    \begin{align}
        \mathcal{R}_{1}=\bigO\left(\left\|\eta_{\mathfrak{e}}\bar{\phi}\right\|_{L^{2}\left(\mathbb{R}^{2}\right)}^{2}+\left\|\eta_{\mathfrak{e}}\bar{\phi}\right\|_{L^{2}\left(\mathbb{R}^{2}\right)}\left\|\eta_{\mathfrak{i}}\bar{\phi}\right\|_{L^{2}\left(\mathbb{R}^{2}\right)}\right).\label{quadratic-form-estimate-lemma-statement-2}
    \end{align}
\end{lemma}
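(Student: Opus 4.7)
The plan is to follow the spectral/modal strategy developed for the analogous quadratic form in \cite{DDMP2VP2023}. Set $\Phi:=\eta_{\mathfrak{i}}\bar\phi$ and $\psi:=(-\Delta_y)^{-1}\Phi$; expanding the bracket, the inequality to prove reads
\begin{equation*}
Q(\Phi):=\int_{\mathbb{R}^{2}}\frac{\Phi^{2}}{\gamma\Gamma^{\gamma-1}_{+}}\,dy-\int_{\mathbb{R}^{2}}\Phi\,\psi\,dy \ \geq\ C_{0}\int_{\mathbb{R}^{2}}\frac{\Phi^{2}}{\gamma\Gamma^{\gamma-1}_{+}}\,dy+\mathcal{R}_{1}.
\end{equation*}
The underlying linear operator is $\mathscr{L}\psi:=\Delta_y\psi+\gamma\Gamma^{\gamma-1}_{+}\psi$, the linearisation of \eqref{power-semilinear-problem-R2} about the radial ground state $\Gamma$ analysed in Appendix \ref{appendix-section}. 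Its low-mode kernel contains the translation directions $\partial_{1}\Gamma,\partial_{2}\Gamma$ (angular mode $|k|=1$) together with at most one radial dilation element at mode $k=0$; the three orthogonality conditions \eqref{bar-phi-support-weighted-L2-space-and-mass-condition-2}--\eqref{bar-phi-centre-of-mass-conditions} are precisely the duals of these zero modes, making strict coercivity of $Q$ plausible.

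The first step is to Fourier-decompose angularly, $\Phi(r,\theta)=\sum_{k\in\mathbb{Z}}\Phi_k(r)e^{ik\theta}$ and $\psi=\sum_k\psi_k(r)e^{ik\theta}$, so that $\psi_k$ solves $-\partial_{r}^{2}\psi_{k}-r^{-1}\partial_{r}\psi_{k}+k^{2}r^{-2}\psi_{k}=\Phi_{k}$ and the quadratic form decouples as $Q(\Phi)=\sum_k Q_k(\Phi_k)$. For $|k|\geq 2$ a direct one-dimensional spectral estimate in the weighted space $L^{2}(r\,dr;\gamma\Gamma^{\gamma-1}_{+})$ yields a $k$-uniform lower bound $Q_k(\Phi_k)\geq c\int|\Phi_k|^{2}/(\gamma\Gamma^{\gamma-1}_{+})\,r\,dr$, since the centrifugal potential $k^{2}/r^{2}$ only improves coercivity as $|k|$ grows.

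The delicate case is $k=0,\pm 1$, where $Q_k$ has a one-dimensional kernel. The hypotheses are orthogonalities on $\bar\phi$, not on $\Phi=\eta_{\mathfrak{i}}\bar\phi$; to transfer them, write $\bar\phi=\Phi+(1-\eta_{\mathfrak{i}})\bar\phi$ and note that $1-\eta_{\mathfrak{i}}$ is supported in the bounded region where $\eta_{\mathfrak{e}}\equiv 1$. Cauchy--Schwarz then yields
\begin{equation*}
\Big|\int_{\mathbb{R}^{2}} \Phi\,dy\Big|+\Big|\int_{\mathbb{R}^{2}} y_1\Phi\,dy\Big|+\Big|\int_{\mathbb{R}^{2}} y_2\Phi\,dy\Big|\leq C\,\|\eta_{\mathfrak{e}}\bar\phi\|_{L^{2}(\mathbb{R}^{2})},
\end{equation*}
so that $\Phi$ is orthogonal to the low-mode kernel of $\mathscr{L}$ up to an $\mathcal{O}(\|\eta_{\mathfrak{e}}\bar\phi\|_{L^{2}})$ defect. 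For each $|k|\leq 1$ I would decompose $\Phi_k$ into its projection onto the mode-$k$ kernel plus its orthogonal complement, bound the complement by the mode-$k$ spectral gap, control the projection via the defect above, and apply Cauchy--Schwarz to the cross terms: the defect-squared contribution produces $\|\eta_{\mathfrak{e}}\bar\phi\|_{L^{2}}^{2}$, while the cross term against the bulk contributes $\|\eta_{\mathfrak{e}}\bar\phi\|_{L^{2}}\|\eta_{\mathfrak{i}}\bar\phi\|_{L^{2}}$, matching exactly the form of $\mathcal{R}_1$.

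The principal obstacle is the mode-by-mode spectral gap itself on the orthogonal complement of the kernel. The weight $(\gamma\Gamma^{\gamma-1}_{+})^{-1}$ is singular at $|y|=1$ where $\Gamma^{\gamma-1}_{+}$ vanishes, so both the choice of natural Hilbert space and the coercivity argument require care; in addition, the low modes must be handled in such a way that the non-degeneracy of $\Gamma$ as a ground state of \eqref{power-semilinear-problem-R2} is genuinely exploited. This is essentially the same difficulty overcome in \cite{DDMP2VP2023}, and I would adapt the variational and ODE-comparison arguments developed there, noting that the hypothesis on $\xi_j$ is used only to guarantee that the time-dependent cutoffs $\eta_{\mathfrak{i}},\eta_{\mathfrak{e}}$ behave uniformly in $t\in[T_0,T]$.
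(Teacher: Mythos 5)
Your proposal follows essentially the same route as the paper's proof: expansion with respect to the weighted eigenvalue problem $-\Delta e_{j}=\mu_{j}\gamma\Gamma^{\gamma-1}_{+}e_{j}$, the spectral gap $\mu_{j}>1$ for $j\geq3$ supplied by the Dancer--Yan non-degeneracy lemma, and transfer of the mass and centre-of-mass conditions from $\bar{\phi}$ to $\eta_{\mathfrak{i}}\bar{\phi}$ with an $\bigO\left(\left\|\eta_{\mathfrak{e}}\bar{\phi}\right\|_{L^{2}}\right)$ defect, which is exactly what produces $\mathcal{R}_{1}$. The one point to tighten in an execution is the treatment of the neutral modes $\partial_{1}\Gamma,\partial_{2}\Gamma$ (eigenvalue $\mu=1$, so they contribute nothing to the quadratic form yet appear in the desired right-hand side): since $y_{i}$ is not proportional to $\partial_{i}\Gamma$ in the weighted pairing, the centre-of-mass identity does not make the kernel projection small by itself but rather bounds $\left|\bar{\phi}_{1}\right|^{2}+\left|\bar{\phi}_{2}\right|^{2}$ by $\sum_{j\geq3}\left|\bar{\phi}_{j}\right|^{2}$ plus the defect squared, and it is this domination by the coercive modes (the paper's estimate \eqref{quadratic-form-estimate-proof-14}) that closes the argument.
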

\begin{proof}
    Due to the fact that $\gamma\geq19$, Lemma \ref{poisson-equation-zero-mass-estimates-lemma}, \eqref{weighted-L2-interior-cutoff-support}, and \eqref{bar-phi-support-weighted-L2-space-and-mass-condition-2}, we have that the integrals on both sides of the inequality in \eqref{quadratic-form-estimate-lemma-statement-1} are well defined.

    \medskip
    In the weighted space $L^{2}\left(\mathbb{R}^{2},\gamma\Gamma^{\gamma-1}_{+}dy\right)$, we can expand in the orthonormal eigenbasis $\left(e_{j}\right)_{j\geq0}$ defined by 
    \begin{align*}
        -\Delta_{y}e_{j}=\gamma\Gamma^{\gamma-1}_{+}\mu_{j}e_{j}.
    \end{align*}
    Note that using Lemmas \ref{dancer-yan-non-degeneracy-lemma} and \ref{vortex-linearised-equation-fourier-coefficients-behaviour-lemma} gives us that $\mu_{0}=0$, $e_{0}$ is a constant, $\mu_{1}=\mu_{2}=1$ with $e_{1}$ and $e_{2}$ given by scalar multiples of $\dell_{1}\Gamma$ and $\dell_{2}\Gamma$ respectively, and $0<\mu_{k}^{-1}\leq \delta<1 $, for $k\geq 3$. So let
    \begin{align}
        \frac{\eta_{\mathfrak{i}}\bar{\phi}}{\gamma\Gamma^{\gamma-1}_{+}}=\sum_{j=0}^{\infty}\bar{\phi}_{j}e_{j}.
    \end{align}
    \begin{align}
        &\int_{\mathbb{R}^{2}}\frac{\eta_{\mathfrak{i}}\bar{\phi}}{\gamma\Gamma^{\gamma-1}_{+}}\left(\eta_{\mathfrak{i}}\bar{\phi}-\gamma\Gamma^{\gamma-1}_{+}\left(-\Delta_{y}\right)^{-1}\left(\eta_{\mathfrak{i}}\bar{\phi}\right)\right)=\int_{\mathbb{R}^{2}}\frac{\eta_{\mathfrak{i}}\bar{\phi}}{\gamma\Gamma^{\gamma-1}_{+}}\left(\eta_{\mathfrak{i}}\bar{\phi}-\gamma\Gamma^{\gamma-1}_{+}\left(-\Delta_{y}\right)^{-1}\left(\gamma\Gamma^{\gamma-1}_{+}\frac{\eta_{\mathfrak{i}}\bar{\phi}}{\gamma\Gamma^{\gamma-1}_{+}}\right)\right)\nonumber\\\nonumber
        &=\left(\frac{\eta_{\mathfrak{i}}\bar{\phi}}{\gamma\Gamma^{\gamma-1}_{+}},\frac{\eta_{\mathfrak{i}}\bar{\phi}}{\gamma\Gamma^{\gamma-1}_{+}}\right)-\left(\frac{\eta_{\mathfrak{i}}\bar{\phi}}{\gamma\Gamma^{\gamma-1}_{+}},\left(-\Delta_{y}\right)^{-1}\left(\gamma\Gamma^{\gamma-1}_{+}\frac{\eta_{\mathfrak{i}}\bar{\phi}}{\gamma\Gamma^{\gamma-1}_{+}}\right)\right),
    \end{align}
    where $\left(\blank, \blank\right)$ is the inner product associated to the space $L^{2}\left(\mathbb{R}^{2},\gamma\Gamma^{\gamma-1}_{+}dy\right)$. Thus we have
    \begin{align}
        &\int_{\mathbb{R}^{2}}\frac{\eta_{\mathfrak{i}}\bar{\phi}}{\gamma\Gamma^{\gamma-1}_{+}}\left(\eta_{\mathfrak{i}}\bar{\phi}-\gamma\Gamma^{\gamma-1}_{+}\left(-\Delta_{y}\right)^{-1}\left(\eta_{\mathfrak{i}}\bar{\phi}\right)\right)=\sum_{j=3}^{\infty}\left(1-\mu_{j}^{-1}\right)\left|\bar{\phi}_{j}\right|^{2}\nonumber\\
        &+\left|\bar{\phi}_{0}e_{0}\right|^{2}\int_{\mathbb{R}^{2}}\gamma\Gamma^{\gamma-1}_{+}\left(-\Delta_{y}\right)^{-1}\left(\gamma\Gamma^{\gamma-1}_{+}\right)+\bar{\phi}_{0}e_{0}\int_{\mathbb{R}^{2}}\gamma\Gamma^{\gamma-1}_{+}\left(-\Delta_{y}\right)^{-1}\left(\eta_{\mathfrak{i}}\bar{\phi}\right)\nonumber\\
        &+\bar{\phi}_{0}e_{0}\int_{\mathbb{R}^{2}}\eta_{\mathfrak{i}}\bar{\phi}\left(-\Delta_{y}\right)^{-1}\left(\gamma\Gamma^{\gamma-1}_{+}\right),\label{quadratic-form-estimate-proof-5}
    \end{align}
    where we have used the fact that $\mu_{1}=\mu_{2}=1$. As stated above, Lemma \ref{dancer-yan-non-degeneracy-lemma} gives us
    \begin{align}
        \sum_{j=3}^{\infty}\left(1-\mu_{j}^{-1}\right)\left|\bar{\phi}_{j}\right|^{2}\geq \left(1-\delta\right)\sum_{j=3}^{\infty}\left|\bar{\phi}_{j}\right|^{2},\label{quadratic-form-estimate-proof-6}
    \end{align}
    some absolute constant $\delta\in\left(0,1\right)$. For the last three terms on the right hand side of \eqref{quadratic-form-estimate-proof-5}, we first note that the support and zero mass assumption in \eqref{bar-phi-support-weighted-L2-space-and-mass-condition-2} as well as \eqref{interior-exterior-cutoff-sum-bounds} gives us
    \begin{align}
        \bar{\phi}_{0}&=\int_{\mathbb{R}^{2}}\gamma\Gamma^{\gamma-1}_{+}\frac{\eta_{\mathfrak{i}}\bar{\phi}}{\gamma\Gamma^{\gamma-1}_{+}}e_{0}=e_{0}\left(\int_{\mathbb{R}^{2}}\bar{\phi}+\int_{\mathbb{R}^{2}}\left(1-\mathfrak{\eta}_{i}\right)\bar{\phi}\right)=e_{0}\int_{\mathbb{R}^{2}}\left(1-\mathfrak{\eta}_{i}\right)\bar{\phi}\nonumber\\
        &=\bigO{\left(\left\|\eta_{\mathfrak{e}}\bar{\phi}\right\|_{L^{2}\left(\mathbb{R}^{2}\right)}\right)}.\label{quadratic-form-estimate-proof-7}
    \end{align}
    Next, using the fact that $\supp \eta_{\mathfrak{i}}\cup \supp \gamma\Gamma^{\gamma-1}_{+}\subset B_{1}(0)$, one can directly calculate that
    \begin{align}
    \left|\int_{\mathbb{R}^{2}}\gamma\Gamma^{\gamma-1}_{+}\left(-\Delta_{y}\right)^{-1}\left(\eta_{\mathfrak{i}}\bar{\phi}\right)+\int_{\mathbb{R}^{2}}\eta_{\mathfrak{i}}\bar{\phi}\left(-\Delta_{y}\right)^{-1}\left(\gamma\Gamma^{\gamma-1}_{+}\right)\right|\leq C\left\|\eta_{\mathfrak{i}}\bar{\phi}\right\|_{L^{2}\left(\mathbb{R}^{2}\right)}.\label{quadratic-form-estimate-proof-7a}
    \end{align}
    Next, we use the centre of mass condition with respect to $y_{1}$ given in \eqref{bar-phi-centre-of-mass-conditions} to get
    \begin{align}
        0&=\int_{\mathbb{R}^{2}}y_{1}\bar{\phi}\ dy=\int_{\mathbb{R}^{2}}\gamma\Gamma^{\gamma-1}_{+}y_{1}\frac{\eta_{\mathfrak{i}}\bar{\phi}}{\gamma\Gamma^{\gamma-1}_{+}}\ dy+\int_{\mathbb{R}^{2}}\left(1-\eta_{\mathfrak{i}}\right)y_{1}\bar{\phi}\ dy\nonumber\\
        &=\bar{\phi}_{1}\int_{\mathbb{R}^{2}}\gamma\Gamma^{\gamma-1}_{+}y_{1}e_{1}\ dy+\int_{\mathbb{R}^{2}}\gamma\Gamma^{\gamma-1}_{+}y_{1}\left(\sum_{j=3}^{\infty}\bar{\phi}_{j}e_{j}\right)\ dy +\bigO\left(\left\|\eta_{\mathfrak{e}}\bar{\phi}\right\|_{L^{2}\left(\mathfrak{W}^{\mathsf{c}}\right)}\right),
        \nonumber
    \end{align}
    where we have again used \eqref{interior-exterior-cutoff-sum-bounds}, and the fact that $\gamma\Gamma^{\gamma-1}_{+}$, $e_{0}$ and $e_{2}$ are even in $y_{1}$. Since $e_{1}$ is, up to some positive normalization constant $c$, equal to $\dell_{1}\Gamma$, we have that
    \begin{align*}
        \int_{\mathbb{R}^{2}}\gamma\Gamma^{\gamma-1}_{+}y_{1}e_{1}\ dy=c\int_{\mathbb{R}^{2}}\left(\gamma\Gamma^{\gamma-1}_{+}\dell_{1}\Gamma\right) y_{1}\ dy=-c\int_{\mathbb{R}^{2}}U(y)dy= -cM<0
    \end{align*}
    where $M$ is the absolute constant defined in \eqref{vortex-mass-definition}. Thus, we have
    \begin{align}
        c_{1}\left(\left|\bar{\phi}_{1}\right|^{2}+\left|\bar{\phi}_{2}\right|^{2}\right)+\bigO\left(\left\|\eta_{\mathfrak{e}}\bar{\phi}\right\|_{L^{2}\left(\mathbb{R}^{2}\right)}^{2}\right)\leq \sum_{j=3}^{\infty}\left|\bar{\phi}_{j}\right|^{2},\label{quadratic-form-estimate-proof-14}
    \end{align}
    as the argument for $\bar{\phi}_{2}$ using the centre of mass condition with respect to $y_{2}$ in \eqref{bar-phi-centre-of-mass-conditions} is completely analogous.

    \medskip
    Using \eqref{quadratic-form-estimate-proof-6}--\eqref{quadratic-form-estimate-proof-14}, we obtain for \eqref{quadratic-form-estimate-proof-5}
    \begin{align*} 
        &\sum_{j=3}^{\infty}\left(1-\mu_{j}\right)\left|\bar{\phi}_{j}\right|^{2}\geq\frac{\min{\{c_{1},1\}}}{2}\left(1-\delta\right)\sum_{j=0}^{\infty}\left|\bar{\phi}_{j}\right|^{2}+\mathcal{R}_{1},
        \nonumber
    \end{align*}
    where $\mathcal{R}_{1}$ satisfies \eqref{quadratic-form-estimate-lemma-statement-2}. This gives \eqref{quadratic-form-estimate-lemma-statement-1}, as required.
\end{proof}
\subsection{Weighted $L^{2}$ A Priori Estimates}
As in \cite{DDMP2VP2023,DDMW2020}, we first wish to study solutions to a suitable linear-in-$\phi_{*j}$ problem. We will derive a priori estimates for the system
\begin{subequations}\label{linear-transport-operator-system}
\begin{align}
&\varepsilon^{2}\phi_{t}+\kappa_{j}\gradperp_{y}\left(\Gamma+a_{*}+a\right)\cdot\nabla_{y}\left(\phi-\gamma\left(\Gamma+a_{*}\right)^{\gamma-1}_{+}\psi\right)+E=0\ \ \text{in}\ \mathbb{R}^{2}\times \left[T_{0},T\right],\label{linear-transport-equation}\\
&\phi(\blank,T)=0\ \ \text{in}\ \mathbb{R}^{2},\label{linear-transport-initial-data}\\
&\phi\to0,\ \left|y\right|\to\infty,\ \forall t\in\left[T_{0},T\right].\label{linear-transport-boundary-condition}
\end{align}
\end{subequations}
Here $a\left(y,t\right)$, and $a_{*}\left(y,t\right)$ are some given functions. Next, $E\left(y,t\right)$ is some error, and $\psi$ is the inverse Laplacian of $\phi$ on $\mathbb{R}^{2}$ as in \eqref{psi-star-r-green-function-representation-upper-half-plane}.

\medskip
We impose that for all $t\in\left[T_{0},T\right]$, 
\begin{align}
    \int_{\mathbb{R}^{2}}\phi\left(y,t\right)\ dy=0,\ \ \ \int_{\mathbb{R}^{2}}y_{1}\phi\left(y,t\right)\ dy=0,\ \ \ \int_{\mathbb{R}^{2}}y_{2}\phi\left(y,t\right)\ dy=0.\label{linear-transport-operator-phi-mass-conditions}
\end{align}
On the functions $a_{*}$ and $a$, we assume
\begin{align}
    \Delta_{y}\left(a_{*}+a\right)\in L^{\infty}\left(\mathbb{R}^{2}\times\left[T_{0},T\right]\right),\label{linear-transport-operator-laplacian-a-star-a-bounded}
\end{align}
We note that this implies $\nabla_{y}a$ has a uniform Log-Lipschitz bound with respect to its space variable. That is, for all $t\in\left[T_{0},T\right]$, and $y, \hat{y}\in\mathbb{R}^{2}$,
\begin{align}
    \left|\nabla_{y}a\left(y,t\right)-\nabla_{y}a\left(\hat{y},t\right)\right|\leq C\left|y-\hat{y}\right|\left|\log{\left|y-\hat{y}\right|}\right|,\label{linear-transport-operator-a-log-lipschitz-bound}
\end{align}
for an absolute constant $C>0$.

\medskip
We also assume that for some constants $C>0$ and $\nu>3/4$, we have, for all $t\in\left[T_{0},T\right]$, and some $r_{1}>2$,
\begin{align}
    \left|a_{*}\right|+\left|\nabla_{y} a_{*}\right|+\left|D^{2}_{y}a_{*}\right|+t\left|\dell_{t}a_{*}\right|&\leq \frac{C\varepsilon^{2}}{t},\label{linear-transport-operator-a-star-bounds}\\
    \left|a\right|+\left\|\nabla_{y}a\right\|_{L^{r_{1}}\left(\mathbb{R}^{2}\right)}&\leq \frac{\varepsilon^{2+\nu}}{t^{\frac{3}{2}}},\label{linear-transport-operator-a-bounds-0}\\
    \left|\nabla_{y}a\right|&\leq \frac{\varepsilon^{2+\nu}}{t^{\frac{1}{2}+\nu}}.\label{linear-transport-operator-a-bounds}
\end{align}
We make an assumption of compact support on $E$, and only study solutions to \eqref{linear-transport-operator-system} that enjoy a similar property:
\begin{align}
    \supp E\left(t\right)\subset B_{\frac{3}{2}}\left(0\right), \quad \supp \phi \left(t\right)\subset B_{2}\left(0\right) \quad t\in\left[T_{0},T\right].\label{linear-transport-operator-error-solution-supports}
\end{align}
With these assumptions in mind, we now prove a priori estimates for solutions to \eqref{linear-transport-operator-system} satisfying \eqref{linear-transport-operator-phi-mass-conditions}--\eqref{linear-transport-operator-error-solution-supports} over the next two lemmas. The first is an ``exterior" estimate on a region where the vorticity at main order is small in a suitable sense.
\begin{lemma}\label{L2-exterior-a-priori-estimate-lemma}
     Let $E\left(y,t\right)\in\mathscr{Y}_{\infty}$, defined in \eqref{weighted-Lr-norm-definition}. Suppose $\phi$ solves \eqref{linear-transport-operator-system} with assumptions \eqref{linear-transport-operator-phi-mass-conditions}--\eqref{linear-transport-operator-error-solution-supports} holding, and suppose $\xi=\left(\xi_{1},\xi_{2},\xi_{3}\right)$ is of the form \eqref{point-vortex-trajectory-function-decomposition}, with \eqref{xi0-bound}--\eqref{tildexi-bound} satisfied. Then for all $\varepsilon>0$ small enough, and all $t\in\left[T_{0},T\right]$,
    \begin{align}
        \left\|\phi\right\|_{L^{2}\left(\mathfrak{W}^{\mathsf{c}}\right)}&\leq \frac{C\varepsilon^{\frac{\gamma}{4}-\frac{3}{2}}}{t^{\frac{\gamma}{4}-\frac{1}{2}}}\int_{t}^{T}\left(\left\|E\right\|_{\mathscr{Y}_{\infty}}+\left\|\phi\right\|_{L^{2}\left(\mathbb{R}^{2}\right)}\right)d\tau,\label{L2-exterior-a-priori-estimate-lemma-statement-1}\\
        \left\|\eta_{\mathfrak{e}}\phi\right\|_{L^{2}\left(\mathbb{R}^{2}\right)}&\leq \frac{C\varepsilon^{\frac{\gamma}{4}-\frac{3}{2}}}{t^{\frac{\gamma}{4}-\frac{1}{2}}}\int_{t}^{T}\left(\left\|E\right\|_{\mathscr{Y}_{\infty}}+\|\left(\gamma\Gamma^{\gamma-1}_{+}\right)^{-\frac{1}{2}}\eta_{\mathfrak{i}}\phi\|_{L^{2}\left(\mathbb{R}^{2}\right)}\right)d\tau.\label{L2-exterior-a-priori-estimate-lemma-statement-2}
    \end{align}
\end{lemma}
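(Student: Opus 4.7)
The strategy is the method of characteristics for the transport equation \eqref{linear-transport-equation}. Writing it as $\phi_t + V\cdot\nabla_y\phi = \mathcal{F}$ with $V=\varepsilon^{-2}\kappa_j\gradperp_y(\Gamma+a_*+a)$ and
\begin{align*}
\mathcal{F} = \varepsilon^{-2}\bigl[\kappa_j\gradperp_y(\Gamma+a_*+a)\cdot\nabla_y(\gamma(\Gamma+a_*)^{\gamma-1}_+\psi)-E\bigr],
\end{align*}
I note that $V$ is divergence-free and, by \eqref{linear-transport-operator-a-log-lipschitz-bound}, Log-Lipschitz, so it generates a unique area-preserving flow $\Phi_t^s$. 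The terminal condition $\phi(T)=0$ then yields the representation
\begin{align*}
\phi(y,t) = -\int_t^T \mathcal{F}(\Phi_t^s(y),s)\,ds.
\end{align*}

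The key observation is that this flow approximately preserves the level sets of $\Gamma$, so that characteristics starting in the exterior region $\mathfrak{W}^c$ at time $t$ remain, throughout $[t,T]$, in a mildly enlarged exterior region. Indeed $\tfrac{d}{ds}\Gamma(Z(s))=\varepsilon^{-2}\kappa_j\nabla\Gamma\cdot\gradperp(a_*+a)$, which is controlled by the smallness of $\nabla a_*,\nabla a$ in \eqref{linear-transport-operator-a-star-bounds}--\eqref{linear-transport-operator-a-bounds}. On this enlarged exterior, $(\Gamma+a_*)^{\gamma-1}_+\leq C(\varepsilon/s)^{(\gamma-1)/4}$, and the identity $\gradperp f\cdot\nabla f=0$ reduces the $\psi$-independent part of the cross term to expressions involving $\gradperp a\cdot\nabla(\Gamma+a_*)$, which is itself small by \eqref{linear-transport-operator-a-bounds}. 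Combining the exterior bound $|E|\leq(\varepsilon/s)^{(\gamma-1)/4}\|E(s)\|_{\mathscr{Y}_\infty}$ with the Poisson estimate $|\psi|\leq C\|\phi\|_{L^2}$ from Lemma \ref{poisson-equation-zero-mass-estimates-lemma} produces a pointwise bound on $\mathcal{F}$ along the characteristic.

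The $L^2(\mathfrak{W}^c)$ norm is then extracted by combining this pointwise bound with area considerations. Since $\mathrm{supp}\,\phi\subset B_2$ by \eqref{linear-transport-operator-error-solution-supports}, I decompose $\mathfrak{W}^c\cap B_2$ into the thin annular piece $\mathfrak{W}^c\cap B_1$ (of area $\sim(\varepsilon/t)^{1/4}$, where $\Gamma$ is close to zero) and $B_2\setminus B_1$, where $\Gamma_+\equiv 0$ so that the $\psi$-dependent cross term vanishes identically and only the $E$ contribution remains. Tracking the image of each piece under $\Phi_t^s$ together with the area-preservation of the flow yields the prefactor $\varepsilon^{\gamma/4-3/2}/t^{\gamma/4-1/2}$ and the advertised integral bound \eqref{L2-exterior-a-priori-estimate-lemma-statement-1}; statement \eqref{L2-exterior-a-priori-estimate-lemma-statement-2} follows from the same argument, with $|\psi|$ estimated via the interior weighted norm $\|(\gamma\Gamma^{\gamma-1}_+)^{-1/2}\eta_{\mathfrak{i}}\phi\|_{L^2}$ using that the mass seen by the Newtonian kernel is concentrated where $\gamma\Gamma^{\gamma-1}_+$ is sizeable. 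The main obstacle is the tight bookkeeping of the characteristic displacement: since $\tfrac{d}{ds}\Gamma(Z(s))$ accumulates logarithmically over the long interval $[t,T]$, verifying that the exterior smallness of $(\Gamma+a_*)^{\gamma-1}_+$ persists along the flow requires the hypotheses \eqref{linear-transport-operator-a-star-bounds}--\eqref{linear-transport-operator-a-bounds} to be used sharply, and the above decomposition of $\mathfrak{W}^c\cap B_2$ is what ultimately delivers the stated prefactor.
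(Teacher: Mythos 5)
Your overall route is the same as the paper's: represent $\phi$ by integrating $\mathcal{F}$ backwards along the area-preserving characteristics of $\varepsilon^{-2}\kappa_j\gradperp_y(\Gamma+a_*+a)$, show the characteristics approximately preserve the exterior region so that both $(\Gamma+a_*)^{\gamma-1}_+$ and $E$ carry the factor $(\varepsilon/\tau)^{(\gamma-1)/4}$ along the flow, control $\psi$ by Lemma \ref{poisson-equation-zero-mass-estimates-lemma}, and convert to $L^2$ using the unit Jacobian and the support information. However, there is a genuine gap at the key step, and you have in fact flagged it yourself without resolving it. You track $\Gamma$ along the flow and write $\tfrac{d}{ds}\Gamma(Z(s))=\varepsilon^{-2}\kappa_j\nabla\Gamma\cdot\gradperp(a_*+a)$, claiming this is "controlled by the smallness of $\nabla a_*$". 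But \eqref{linear-transport-operator-a-star-bounds} only gives $|\nabla_y a_*|\leq C\varepsilon^2/s$, so after the $\varepsilon^{-2}$ the $a_*$-contribution is $O(1/s)$ per unit time and its integral over $[t,T]$ is $\log(T/t)$, which is unbounded as $T\to\infty$. A drift of size $\log(T/t)$ in the value of $\Gamma$ destroys the claim that characteristics starting in $\mathfrak{W}^{\mathsf{c}}$ stay where $(\Gamma+a_*)^{\gamma-1}_+\leq C(\varepsilon/\tau)^{(\gamma-1)/4}$, and with it the entire prefactor $\varepsilon^{\gamma/4-3/2}t^{-(\gamma/4-1/2)}$; calling this "tight bookkeeping" does not close it.

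The fix is to track the quantity $\Gamma+a_*$ rather than $\Gamma$ along the flow. Then the dangerous term cancels by antisymmetry, $\gradperp_y(\Gamma+a_*)\cdot\nabla_y(\Gamma+a_*)=0$, and one is left with
\begin{align*}
\frac{d}{d\tau}\bigl[(\Gamma+a_*)(\bar y(\tau),\tau)\bigr]=\dell_\tau a_*+\varepsilon^{-2}\kappa_j\,\gradperp_y a\cdot\nabla_y(\Gamma+a_*),
\end{align*}
where the first term is bounded by $C\varepsilon^2/\tau^2$ thanks to the \emph{time}-derivative hypothesis $\tau|\dell_\tau a_*|\leq C\varepsilon^2/\tau$ in \eqref{linear-transport-operator-a-star-bounds} (not the spatial gradient bound), and the second by $C\varepsilon^{\nu}/\tau^{1/2+\nu}$ from \eqref{linear-transport-operator-a-bounds}. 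Both integrate over $[t,T]$ to a quantity $\leq C(\varepsilon/t)^{1/4}$ uniformly in $T$ precisely because $\nu>3/4$; this is the paper's estimate \eqref{stream-function-level-sets-characteristics-estimate-4} and is what legitimizes the bounds \eqref{L2-exterior-a-priori-estimate-lemma-4}--\eqref{L2-exterior-a-priori-estimate-lemma-5} you need. With this correction your remaining steps (the decomposition of $\mathfrak{W}^{\mathsf{c}}\cap B_2$, the vanishing of the cross term on $B_2\setminus B_1$, and the area count) are consistent with the paper's argument.
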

\begin{proof}
    The proof is very similar to an analogous exterior estimate proved in \cite{DDMP2VP2023}, and uses the transport-like structure of the linear problem. Let $t\in\left[T_{0},T\right]$, and define characteristics $\bar{y}\left(\tau,t,y\right)$ by the ODE problem
    \begin{subequations}\label{characteristics-ode-system}
    \begin{align}
        &\frac{d\bar{y}}{d\tau}=\varepsilon^{-2}\gradperp_{\bar{y}}\left(\Gamma+a_{*}+a\right)\left(\bar{y}\left(\tau,t,y\right),\tau\right),\ \ \ \tau\in\left[t,T\right],\label{characteristics-ode}\\
        &\bar{y}\left(t,t,z\right)=y.
        \label{characteristics-initial-data}
   \end{align}
    \end{subequations}
    The first thing to note is that due to \eqref{linear-transport-operator-a-log-lipschitz-bound}--\eqref{linear-transport-operator-a-star-bounds}, we have definiteness of the characteristics. Next, as the right hand side of \eqref{characteristics-ode} is divergence free, if we define the Jacobian of the transformation between $y$ and $\bar{y}$ as $\mathscr{J}\left(y,\bar{y}\right)$, using Jacobi's formula we obtain,
    \begin{align}
        \mathscr{J}\left(y,\bar{y}\right)=1\label{L2-exterior-a-priori-estimate-lemma-12}
    \end{align}
    for all $\tau\in\left[t,T\right]$.

    \medskip
    We also have a good estimate on $\left(\Gamma+a_{*}\right)\left(\bar{y}\left(\tau\right),\tau\right)$ as $\tau$ varies. Let $t<\tau_{1}$ and note
    \begin{align}
        \nonumber\left(\Gamma+a_{*}\right)\left(\bar{y}\left(\tau_{1}\right),\tau_{1}\right)-\left(\Gamma+a_{*}\right)\left(y,t\right)=\int_{t}^{\tau_{1}}\dell_{\tau}\left(\Gamma+a_{*}\right) d\tau.
    \end{align}
    Then, because of \eqref{linear-transport-operator-a-star-bounds}--\eqref{linear-transport-operator-a-bounds}, and the fact that $\bar{y}$ satisfies \eqref{characteristics-ode-system}, we have for all $\tau\in\left[t,\tau_{1}\right]$,
    \begin{align}
       \nonumber \left|\dell_{\tau}\left(\Gamma+a_{*}\right)\left(\bar{y}\left(\tau\right),\tau\right)\right|\leq \frac{C\varepsilon^{\nu}}{t^{\nu+\frac{1}{2}}},
    \end{align}
    some absolute constants $C>0$ and $\nu>3/4$. Thus, for all $\tau_{1}\in\left[t,T\right]$, we have
    \begin{align}
        \left|\left(\Gamma+a_{*}\right)\left(\bar{y}\left(\tau_{1}\right),\tau_{1}\right)-\left(\Gamma+a_{*}\right)\left(y,t\right)\right|\leq C\left(\frac{\varepsilon}{t}\right)^{\frac{1}{4}}.\label{stream-function-level-sets-characteristics-estimate-4}
    \end{align}
    Using \eqref{weighted-Lr-norm-bounds-Lr-norm}, we have, for all $\tau\in\left[t,T\right]$,
    \begin{align}
\nonumber\left|E\left(\bar{z}\left(\tau\right),\tau\right)\right|\leq C\left(\gamma\Gamma^{\gamma-1}_{+}\left(\bar{y}\left(\tau\right)\right)+\left(\frac{\varepsilon}{t}\right)^{\left(\frac{\gamma-1}{4}\right)}\right)\left\|E\right\|_{\mathscr{Y}_{\infty}}.
    \end{align}
    We note that since $y\in\mathfrak{W}^{\mathsf{c}}$, $\left|\gamma\Gamma^{\gamma-1}_{+}(y)\right|\leq \left(\varepsilon t^{-1}\right)^{\frac{1}{4}}$ for all $\tau\in\left[t,T\right]$. Then, this alongside \eqref{linear-transport-operator-a-star-bounds}, and \eqref{stream-function-level-sets-characteristics-estimate-4} give, for all $\tau\in\left[t,T\right]$,
    \begin{align}
        \left|\gamma\Gamma^{\gamma-1}_{+}\left(\bar{y}\left(\tau\right)\right)\right|\leq C\left(\frac{\varepsilon}{t}\right)^{\left(\frac{\gamma-1}{4}\right)},\quad \left|\gamma\left(\Gamma+a_{*}\right)^{\gamma-1}_{+}\left(\bar{y}\left(\tau\right).\tau\right)\right|\leq C\left(\frac{\varepsilon}{t}\right)^{\left(\frac{\gamma-1}{4}\right)}\label{L2-exterior-a-priori-estimate-lemma-4}
    \end{align}
    which in turn gives, for $z\in\mathfrak{W}^{\mathsf{c}}$ and all $\tau\in\left[t,T\right]$,
    \begin{align}
        \left|E\left(\bar{y}\left(\tau,t,y\right),\tau\right)\right|\leq C\left(\frac{\varepsilon}{t}\right)^{\left(\frac{\gamma-1}{4}\right)}\left\|E\right\|_{\mathscr{Y}_{\infty}}\left(\tau\right).
        \label{L2-exterior-a-priori-estimate-lemma-5}
    \end{align}
    Then, using the characteristics to defined in \eqref{characteristics-ode-system} to write a solution to \eqref{linear-transport-operator-system}, $\phi$, as
    \begin{align}
        \phi\left(y,t\right)=\varepsilon^{-2}\int_{t}^{T}\left(-E\left(\bar{y}\right)+\kappa_{j}\gradperp_{\bar{y}}\left(\Gamma+a_{*}+a\right)\cdot\nabla_{\bar{y}}\left(\gamma\left(\Gamma+a_{*}\right)^{\gamma-1}_{+}\psi\left(\bar{y}\right)\right)\right) d\tau,\label{L2-exterior-a-priori-estimate-lemma-1}
    \end{align}
    we can use \eqref{L2-exterior-a-priori-estimate-lemma-12}--\eqref{L2-exterior-a-priori-estimate-lemma-1} to obtain Lemma \ref{L2-exterior-a-priori-estimate-lemma}.
\end{proof}
Now we move on to the a priori estimate on the interior region where $\eta_{\mathfrak{i}}$ is supported.
\begin{lemma}\label{L2-interior-a-priori-estimate-lemma}
    Let $E\left(y,t\right)\in\mathscr{Y}_{\infty}$, defined in \eqref{weighted-Lr-norm-definition}, and suppose $E$ has the bound
    \begin{align}
        \left\|E\right\|_{\mathscr{Y}_{\infty}}<\frac{\varepsilon^{5-\sigma_{0}}}{t^{\frac{5}{2}}},\label{L2-interior-a-priori-estimate-lemma-error-bound-statement}
    \end{align}
    for some small $\sigma_{0}>0$.

    \medskip
    Suppose further that $\phi$ solves \eqref{linear-transport-operator-system} with assumptions \eqref{linear-transport-operator-phi-mass-conditions}--\eqref{linear-transport-operator-error-solution-supports} holding, and suppose $\xi=\left(\xi_{1},\xi_{2},\xi_{3}\right)$ is of the form \eqref{point-vortex-trajectory-function-decomposition}, with \eqref{xi0-bound}--\eqref{tildexi-bound} satisfied. Then for all $\varepsilon>0$ small enough, and all $T_{0}>0$ large enough, we have for all $t\in\left[T_{0},T\right]$,
    \begin{align}
        \|\left(\gamma\Gamma^{\gamma-1}_{+}\right)^{-\frac{1}{2}}\eta_{\mathfrak{i}}\phi\|_{L^{2}\left(\mathbb{R}^{2}\right)}\leq\frac{C\varepsilon^{3-\sigma_{0}}\left|\log{\varepsilon}\right|^{\frac{3}{2}}}{t^{\frac{3}{2}}},\label{L2-interior-a-priori-estimate-lemma-statement}
    \end{align}
    for some absolute constant $C>0$.
\end{lemma}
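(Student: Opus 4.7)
The plan is an energy estimate. I multiply \eqref{linear-transport-equation} by the test function $\eta_{\mathfrak{i}}^{2}\bigl(\phi/(\gamma\Gamma^{\gamma-1}_{+})-\psi\bigr)$ and integrate over $\mathbb{R}^{2}$. The crucial algebraic fact is that the principal part of the transport term, $\kappa_{j}\gradperp_{y}\Gamma\cdot\nabla_{y}(\phi-\gamma\Gamma^{\gamma-1}_{+}\psi)$, integrates to zero against this test function up to commutators supported on $\nabla_{y}\eta_{\mathfrak{i}}$: the two ``diagonal'' terms $\int\gradperp_{y}\Gamma\cdot\nabla_{y}\phi\cdot\eta_{\mathfrak{i}}^{2}\phi/(\gamma\Gamma^{\gamma-1}_{+})$ and $\int\gamma\Gamma^{\gamma-1}_{+}\gradperp_{y}\Gamma\cdot\nabla_{y}\psi\cdot\eta_{\mathfrak{i}}^{2}\psi$ both vanish after integration by parts because $\gradperp_{y}\Gamma$ is tangent to the level sets of any radial weight, while the two ``cross'' pieces $\pm\kappa_{j}\int\eta_{\mathfrak{i}}^{2}\phi\,\gradperp_{y}\Gamma\cdot\nabla_{y}\psi$ cancel exactly between the two halves of the test function.

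The time-derivative term then produces, modulo exterior commutators supported on $\supp\nabla_{y}\eta_{\mathfrak{i}}\subset\supp\eta_{\mathfrak{e}}$, the quantity
\[\frac{\varepsilon^{2}}{2}\frac{d}{dt}\int_{\mathbb{R}^{2}}\eta_{\mathfrak{i}}^{2}\Bigl(\frac{\phi^{2}}{\gamma\Gamma^{\gamma-1}_{+}}-\phi\psi\Bigr)\,dy.\]
Writing $\int\eta_{\mathfrak{i}}^{2}\phi\psi=\int(\eta_{\mathfrak{i}}\phi)\cdot(-\Delta)^{-1}(\eta_{\mathfrak{i}}\phi)$ modulo terms of the type $\|\eta_{\mathfrak{e}}\phi\|_{L^{2}}\|\phi\|_{L^{2}}$ (using the $\nabla\eta_{\mathfrak{i}}$ and $\Delta\eta_{\mathfrak{i}}$ corrections estimated via Lemma \ref{poisson-equation-zero-mass-estimates-lemma}), this integrand is precisely the quadratic form bounded from below in Lemma \ref{quadratic-form-estimate-lemma}. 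The three orthogonality conditions \eqref{linear-transport-operator-phi-mass-conditions} are exactly what allow that coercivity to apply, yielding a controlled lower bound in terms of $\|(\gamma\Gamma^{\gamma-1}_{+})^{-1/2}\eta_{\mathfrak{i}}\phi\|_{L^{2}}^{2}$ plus a remainder involving only the exterior norm.

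Next I estimate all remaining contributions. Commutators with $\nabla_{y}\eta_{\mathfrak{i}}$ and $\partial_{t}\eta_{\mathfrak{i}}$ are absorbed using \eqref{derivative-of-inner-cutoff-outer-cutoff-comparison-bound-1} and the Poisson bounds of Section \ref{poisson-equation-section}. The perturbative transport pieces $\gradperp_{y}(a_{*}+a)\cdot\nabla_{y}(\phi-\gamma(\Gamma+a_{*})^{\gamma-1}_{+}\psi)$, together with the correction coming from replacing $\gamma\Gamma^{\gamma-1}_{+}$ by $\gamma(\Gamma+a_{*})^{\gamma-1}_{+}$, are handled via \eqref{linear-transport-operator-a-star-bounds}--\eqref{linear-transport-operator-a-bounds} together with Lemma \ref{phi-interpolation-estimate-lemma} applied to $\psi$; it is this interpolation between $L^{\infty}$ and $L^{2}$ control, combined with the logarithmic nature of the Biot-Savart kernel, that generates the $|\log\varepsilon|$ losses whose product becomes the $|\log\varepsilon|^{3/2}$ in the final bound. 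The source term $E$ contributes $\int E\,\eta_{\mathfrak{i}}^{2}\bigl(\phi/(\gamma\Gamma^{\gamma-1}_{+})-\psi\bigr)dy$, bounded by $\|E\|_{\mathscr{Y}_{\infty}}$ times the unknown via \eqref{L2-interior-a-priori-estimate-lemma-error-bound-statement}.

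Coupling the resulting differential inequality with the exterior bound \eqref{L2-exterior-a-priori-estimate-lemma-statement-2} of Lemma \ref{L2-exterior-a-priori-estimate-lemma} closes the argument: one obtains a scalar Gronwall-type inequality for $f(t):=\|(\gamma\Gamma^{\gamma-1}_{+})^{-1/2}\eta_{\mathfrak{i}}\phi(\cdot,t)\|_{L^{2}}^{2}$ on $[t,T]$ with terminal value zero, the coupling constant carrying a fast-decaying factor $(\varepsilon/t)^{(\gamma-3)/4}$ which for $\gamma\geq 19$ can be absorbed by enlarging $T_{0}$ (exploiting the freedom of Remark \ref{choice-of-T0-remark}). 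Integrating and using \eqref{L2-interior-a-priori-estimate-lemma-error-bound-statement} yields the claimed bound. The main obstacle is the bookkeeping: tracking the interior/exterior coupling so that no power of $\varepsilon$ or $t$ is lost, and ensuring that the interpolation losses from the perturbation $a$ and from the Biot-Savart kernel aggregate to no more than $|\log\varepsilon|^{3/2}$.
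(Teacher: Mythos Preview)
Your overall strategy---multiply by a weighted test function so the principal transport term drops out, extract a time derivative of the quadratic form in Lemma~\ref{quadratic-form-estimate-lemma}, integrate backward from $T$, and close against the exterior bound of Lemma~\ref{L2-exterior-a-priori-estimate-lemma}---is exactly the paper's. But there is a genuine gap in your choice of test function.

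The paper tests against $\eta_{\mathfrak{i}}G$ with
\[
G=\frac{\eta_{\mathfrak{i}}\phi}{\gamma(\Gamma+a_{*})^{\gamma-1}_{+}}-(-\Delta_{y})^{-1}(\eta_{\mathfrak{i}}\phi),
\]
not $\eta_{\mathfrak{i}}^{2}\bigl(\phi/(\gamma\Gamma^{\gamma-1}_{+})-\psi\bigr)$. The replacement of $\psi=(-\Delta)^{-1}\phi$ by $(-\Delta)^{-1}(\eta_{\mathfrak{i}}\phi)$ is not cosmetic. With the paper's choice, the time-derivative contribution symmetrizes cleanly:
\[
\int\eta_{\mathfrak{i}}\phi_{t}\,(-\Delta)^{-1}(\eta_{\mathfrak{i}}\phi)=\tfrac{1}{2}\tfrac{d}{dt}\int(\eta_{\mathfrak{i}}\phi)(-\Delta)^{-1}(\eta_{\mathfrak{i}}\phi)-\int(\partial_{t}\eta_{\mathfrak{i}})\phi\,(-\Delta)^{-1}(\eta_{\mathfrak{i}}\phi),
\]
and the commutator is genuinely exterior. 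With your choice, symmetrizing $\int\eta_{\mathfrak{i}}^{2}\phi_{t}\psi$ leaves the discrepancy $\tfrac{1}{2}\bigl(\int\eta_{\mathfrak{i}}^{2}\phi_{t}\psi-\int\eta_{\mathfrak{i}}^{2}\phi\psi_{t}\bigr)=\tfrac{1}{2}\int\phi_{t}\bigl[\eta_{\mathfrak{i}}^{2}(-\Delta)^{-1}\phi-(-\Delta)^{-1}(\eta_{\mathfrak{i}}^{2}\phi)\bigr]$; the bracket is indeed $(-\Delta)^{-1}$ of a source supported on $\supp\nabla\eta_{\mathfrak{i}}$, but the prefactor $\phi_{t}$ is only of size $\varepsilon^{-2}$ via the equation, and reinserting the equation for it brings back precisely the transport term you are trying to control. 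Your identity $\int\eta_{\mathfrak{i}}^{2}\phi\psi=\int(\eta_{\mathfrak{i}}\phi)(-\Delta)^{-1}(\eta_{\mathfrak{i}}\phi)+O(\|\eta_{\mathfrak{e}}\phi\|\|\phi\|)$ is correct statically but does not survive a time derivative without reintroducing $\phi_{t}$. The fix is to use $(-\Delta)^{-1}(\eta_{\mathfrak{i}}\phi)$ in the test function from the outset.

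Two lesser points. The paper also uses the weight $\gamma(\Gamma+a_{*})^{\gamma-1}_{+}$ rather than $\gamma\Gamma^{\gamma-1}_{+}$, so that the full transport field $\nabla^{\perp}(\Gamma+a_{*})$ is tangent to the weight's level sets; one passes back to $\Gamma^{\gamma-1}_{+}$ only at the end via \eqref{f-prime-L2-inner-weight-comparison-bound} before applying Lemma~\ref{quadratic-form-estimate-lemma}. And the $|\log\varepsilon|^{3}$ (hence $|\log\varepsilon|^{3/2}$ after the square root) does not come from Lemma~\ref{phi-interpolation-estimate-lemma}: it arises from splitting the source contribution $\int\eta_{\mathfrak{i}}G\cdot E$ by Young's inequality with an asymmetric weight $|\log\varepsilon|^{-1}$ (see \eqref{L2-interior-a-priori-estimate-lemma-30}), chosen precisely so that the resulting Gronwall coefficient $C/(\tau|\log\varepsilon|)$ integrates to $O(1)$ over $[T_{0},T]$ uniformly in $T$.
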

\begin{remark}
    As mentioned in Remark \ref{choice-of-T0-remark}, we can enlarge $T_{0}$ if necessary as long as we keep it independent of $\varepsilon$.
\end{remark}
\begin{proof}
    Once again the strategy is very similar to the interior estimate proved in \cite{DDMP2VP2023}. Take the equality \eqref{linear-transport-equation}, multiply by
    \begin{align}
        \eta_{\mathfrak{i}}G\coloneqq\left(\frac{\eta_{\mathfrak{i}}^{2}\phi}{\gamma\left(\Gamma+a_{*}\right)^{\gamma-1}_{+}}-\eta_{\mathfrak{i}}\left(-\Delta_{y}\right)^{-1}\left(\eta_{\mathfrak{i}}\phi\right)\right),\label{L2-interior-a-priori-estimate-lemma-1}
    \end{align}
    and integrate over $\mathbb{R}^{2}$ to obtain
    \begin{align}
        &\varepsilon^{2}\int_{B_{2}\left(0\right)}\frac{\eta_{\mathfrak{i}}^{2}\phi\ \dell_{t}\phi\ dy}{\gamma\left(\Gamma+a_{*}\right)^{\gamma-1}_{+}}-\varepsilon^{2}\int_{B_{2}\left(0\right)}\eta_{\mathfrak{i}}\dell_{t}\phi\left(-\Delta_{y}\right)^{-1}\left(\eta_{\mathfrak{i}}\phi\right)dy\label{L2-interior-a-priori-estimate-lemma-2}\\
        &+\kappa_{j}\int_{B_{2}\left(0\right)}\eta_{\mathfrak{i}}\left(y,t\right)G\left(y,t\right)\ \gradperp_{y}\left(\Gamma+a_{*}+a\right)\cdot\nabla_{y}\left(\phi-\gamma\left(\Gamma+a_{*}\right)^{\gamma-1}_{+}\psi\right)dy\nonumber\\
        &+\int_{B_{2}\left(0\right)}\left(\frac{\eta_{\mathfrak{i}}^{2}\phi}{\gamma\left(\Gamma+a_{*}\right)^{\gamma-1}_{+}}-\eta_{\mathfrak{i}}\left(-\Delta_{y}\right)^{-1}\left(\eta_{\mathfrak{i}}\phi\right)\right)E\ dy=0,\nonumber
    \end{align}
    where the region of integration is due to the support of $\eta_{\mathfrak{i}}$ given in \eqref{weighted-L2-interior-cutoff-support}. In the remainder of this proof, we ignore the factor of $\kappa_{j}$ on the second integral above, as it does not play an important role in the estimates.

    \medskip
    For the first term on the left hand side of \eqref{L2-interior-a-priori-estimate-lemma-2}, we have
    \begin{align}
        \varepsilon^{2}\int_{B_{2}\left(0\right)}\frac{\eta_{\mathfrak{i}}^{2}\phi\ \dell_{t}\phi\ dy}{\gamma\left(\Gamma+a_{*}\right)^{\gamma-1}_{+}}&=\frac{\varepsilon^{2}}{2}\frac{d}{dt}\left(\int_{B_{2}\left(0\right)}\frac{\eta_{\mathfrak{i}}^{2}\phi^{2}\ dy}{\gamma\left(\Gamma+a_{*}\right)^{\gamma-1}_{+}}\right)\nonumber\\
        &-\frac{\varepsilon^{2}}{2}\int_{B_{2}\left(0\right)}\dell_{t}\left(\frac{\eta_{\mathfrak{i}}^{2}}{\gamma\left(\Gamma+a_{*}\right)^{\gamma-1}_{+}}\right)\phi^{2}\ dy.\label{L2-interior-a-priori-estimate-lemma-3}
    \end{align}
    In order to bound the last term on the right hand side of \eqref{L2-interior-a-priori-estimate-lemma-3}, we first note that on $B_{2}\left(0\right)$, \eqref{weighted-L2-interior-cutoff}--\eqref{weighted-L2-exterior-cutoff} as well as \eqref{linear-transport-operator-a-star-bounds} give that on the support of $\eta_{\mathfrak{i}}$,
    
    \begin{align}
        \left(1+C\left(\frac{\varepsilon}{t}\right)^{\frac{3}{4}}\right)\gamma\left(\Gamma+a_{*}\right)^{\gamma-1}_{+}\geq\gamma\Gamma^{\gamma-1}_{+}\geq\left(1-C\left(\frac{\varepsilon}{t}\right)^{\frac{3}{4}}\right)\gamma\left(\Gamma+a_{*}\right)^{\gamma-1}_{+},\label{f-prime-L2-inner-weight-comparison-bound}
    \end{align}
    for some constant $C>0$. Then \eqref{f-prime-L2-inner-weight-comparison-bound}, \eqref{weighted-L2-interior-cutoff-support}, and \eqref{derivative-of-inner-cutoff-outer-cutoff-comparison-bound-1} give
    \begin{align}
        \left|\eta_{\mathfrak{i}}\dell_{t}\eta_{\mathfrak{i}}\right|\left|\frac{1}{\gamma\left(\Gamma+a_{*}\right)^{\gamma-1}_{+}}\right|\leq \frac{C\eta_{\mathfrak{e}}^{2}}{t}\left(\frac{t}{\varepsilon}\right)^{\frac{\gamma-1}{4}}.\label{L2-interior-a-priori-estimate-lemma-4}
    \end{align}
    Next, we note that due to \eqref{linear-transport-operator-a-star-bounds}, and \eqref{weighted-L2-interior-cutoff-support}, we have
    \begin{align}
        \left|\eta_{\mathfrak{i}}^{2}\dell_{t}\left(\frac{1}{\gamma\left(\Gamma+a_{*}\right)^{\gamma-1}_{+}}\right)\right|&\leq \frac{C\eta_{\mathfrak{i}}^{2}}{\gamma\left(\Gamma+a_{*}\right)^{\gamma}_{+}}\left(\frac{\varepsilon}{t}\right)^{2}\leq \frac{C\eta_{\mathfrak{i}}^{2}}{\gamma\left(\Gamma+a_{*}\right)^{\gamma-1}_{+}}\left(\frac{\varepsilon}{t}\right)^{\frac{7}{4}}.
   \nonumber \end{align}
    Thus
    \begin{align}
        \left|\int_{B_{2}\left(0\right)}\dell_{t}\left(\frac{\eta_{\mathfrak{i}}^{2}}{\gamma\left(\Gamma+a_{*}\right)^{\gamma-1}_{+}}\right)\phi^{2}\ dy\right|&\leq C\left(\frac{\varepsilon}{t}\right)^{\frac{7}{4}}\|\left(\gamma\left(\Gamma+a_{*}\right)^{\gamma-1}_{+}\right)^{-\frac{1}{2}}\eta_{\mathfrak{i}}\phi\|_{L^{2}\left(\mathbb{R}^{2}\right)}^{2}+\frac{C\alpha_{\mathfrak{e}}^{2}\left(t\right)}{t},\label{L2-interior-a-priori-estimate-lemma-6}
    \end{align}
    where $\alpha_{\mathfrak{e}}$ is defined as
    \begin{align}
        \alpha_{\mathfrak{e}}=\left(\frac{t}{\varepsilon}\right)^{\frac{\gamma-1}{8}}\left\|\eta_{\mathfrak{e}}\phi\right\|_{L^{2}\left(\mathbb{R}^{2}\right)}.\label{alpha-exterior-definition}
    \end{align}
    Next we have the second term on the left hand side of \eqref{L2-interior-a-priori-estimate-lemma-2} which we can write as
    \begin{align}
        \frac{\varepsilon^{2}}{2}\frac{d}{dt}\left(\int_{B_{2}\left(0\right)}\left(\eta_{\mathfrak{i}}\phi\right)\left(-\Delta_{y}\right)^{-1}\left(\eta_{\mathfrak{i}}\phi\right)dy\right)-\varepsilon^{2}\int_{B_{2}\left(0\right)}\left(\phi\dell_{t}\eta_{\mathfrak{i}}\right)\left(-\Delta_{y}\right)^{-1}\left(\eta_{\mathfrak{i}}\phi\right)dy.\label{L2-interior-a-priori-estimate-lemma-7}
    \end{align}
    Similarly to \eqref{L2-interior-a-priori-estimate-lemma-4}, we have
    \begin{align}
        \left|\int_{B_{2}\left(0\right)}\left(\phi\dell_{t}\eta_{\mathfrak{i}}\right)\left(-\Delta_{y}\right)^{-1}\left(\eta_{\mathfrak{i}}\phi\right)dy\right|\leq\frac{C}{t}\int_{B_{2}\left(0\right)}\eta_{\mathfrak{e}}\left|\phi\right|\left|\left(-\Delta_{y}\right)^{-1}\left(\eta_{\mathfrak{i}}\phi\right)\right| dy.\label{L2-interior-a-priori-estimate-lemma-8}
    \end{align}
    Bounding $\left(-\Delta_{y}\right)^{-1}\left(\eta_{\mathfrak{i}}\phi\right)$ in $L^{\infty}$, and then using the compact support of the integral on the right hand side of \eqref{L2-interior-a-priori-estimate-lemma-8} to deal with the $\log{\left|y\right|}$ term, as well as to bound $\eta_{\mathfrak{e}}\phi$ in $L^{2}$ gives us
    \begin{align}
        \left|\int_{B_{2}\left(0\right)}\left(\phi\dell_{t}\eta_{\mathfrak{i}}\right)\left(-\Delta_{y}\right)^{-1}\left(\eta_{\mathfrak{i}}\phi\right)dy\right|\leq\frac{C}{t}\left(\frac{\varepsilon}{t}\right)^{\left(\frac{\gamma-1}{8}\right)}\alpha_{\mathfrak{e}}\left(t\right)\left\|\phi\right\|_{L^{2}\left(\mathbb{R}^{2}\right)}.\nonumber
    \end{align}
    Then we use \eqref{weighted-Lr-norm-bounds-Lr-norm} for $\phi$ and obtain
    \begin{align}
        &\left|\int_{B_{2}\left(0\right)}\left(\phi\dell_{t}\eta_{\mathfrak{i}}\right)\left(-\Delta_{y}\right)^{-1}\left(\eta_{\mathfrak{i}}\phi\right)dy\right|\label{L2-interior-a-priori-estimate-lemma-10}\\
        &\leq \frac{C}{t^{2}}\left(\frac{\varepsilon}{t}\right)^{\left(\frac{\gamma-1}{8}\right)}\alpha_{\mathfrak{e}}\left(t\right)^{2}+C\left(\frac{\varepsilon}{t}\right)^{\left(\frac{\gamma-1}{8}\right)}\|\left(\gamma\left(\Gamma+a_{*}\right)^{\gamma-1}_{+}\right)^{-\frac{1}{2}}\eta_{\mathfrak{i}}\phi\|_{L^{2}\left(\mathbb{R}^{2}\right)}^{2},\nonumber
    \end{align}
    where we have applied Young's Inequality.

    \medskip
    Let $\bar{\eta}_{\mathfrak{i}}=1-\eta_{\mathfrak{i}}$. For the term on the second line on the left hand side of \eqref{L2-interior-a-priori-estimate-lemma-2}, we first note that since $\psi=\left(-\Delta_{y}\right)^{-1}\left(\phi\right)$,
    \begin{align}
        &\nabla_{y}\left(\gamma\left(\Gamma+a_{*}\right)^{\gamma-1}_{+}\psi\right)=\nabla_{y}\left(\gamma\left(\Gamma+a_{*}\right)^{\gamma-1}_{+}\left(-\Delta_{y}\right)^{-1}\left(\bar{\eta}_{\mathfrak{i}}\phi\right)\right)+\nabla_{y}\left(\gamma\left(\Gamma+a_{*}\right)^{\gamma-1}_{+}\left(-\Delta_{y}\right)^{-1}\left(\eta_{\mathfrak{i}}\phi\right)\right).\label{L2-interior-a-priori-estimate-lemma-11}
    \end{align}
    Substituting \eqref{L2-interior-a-priori-estimate-lemma-11} in to the integral on the second line of \eqref{L2-interior-a-priori-estimate-lemma-2}, we obtain
    \begin{align}
        &\int_{B_{2}\left(0\right)}\eta_{\mathfrak{i}}G\  \gradperp_{y}\left(\Gamma+a_{*}+a\right)\cdot\nabla_{y}\left(\phi-\gamma\left(\Gamma+a_{*}\right)^{\gamma-1}_{+}\psi\right)dy\label{L2-interior-a-priori-estimate-lemma-12}\\
        &=\int_{B_{2}\left(0\right)}\eta_{\mathfrak{i}}G\  \gradperp_{y}\left(\Gamma+a_{*}+a\right)\cdot\nabla_{y}\left(\gamma\left(\Gamma+a_{*}\right)^{\gamma-1}_{+}G\right)dy+\sum_{i=1}^{7}\mathcal{I}_{i},\nonumber
    \end{align}
    where, noting that $\gamma\left(\Gamma+a_{*}\right)^{\gamma-1}_{+}$ is a function of $\Gamma+a_{*}$, we have
    \begin{align}
        \mathcal{I}_{1}=-\int_{B_{2}\left(0\right)}\frac{\eta_{\mathfrak{i}}^{2}\phi}{\gamma\left(\Gamma+a_{*}\right)^{\gamma-1}_{+}}\left(-\Delta_{y}\right)^{-1}\left(\bar{\eta}_{\mathfrak{i}}\phi\right)\gradperp_{y}\left(a\right)\cdot\nabla_{y}\left(\gamma\left(\Gamma+a_{*}\right)^{\gamma-1}_{+}\right)\ dy,\label{L2-interior-a-priori-estimate-lemma-13}
    \end{align}
    \begin{align}
        \mathcal{I}_{2}=\int_{B_{2}\left(0\right)}\eta_{\mathfrak{i}}\left(-\Delta_{y}\right)^{-1}\left(\eta_{\mathfrak{i}}\phi\right)\left(-\Delta_{y}\right)^{-1}\left(\bar{\eta}_{\mathfrak{i}}\phi\right)\gradperp_{y}\left(a\right)\cdot\nabla_{y}\left(\gamma\left(\Gamma+a_{*}\right)^{\gamma-1}_{+}\right)\ dy,\label{L2-interior-a-priori-estimate-lemma-14}
    \end{align}
    \begin{align}
        \mathcal{I}_{3}=-\int_{B_{2}\left(0\right)}\eta_{\mathfrak{i}}^{2}\phi\nabla_{y}\left(\left(-\Delta_{y}\right)^{-1}\left(\bar{\eta}_{\mathfrak{i}}\phi\right)\right)\cdot\gradperp_{y}\left(\Gamma+a_{*}\right)^{\gamma}_{+}\ dy,\label{L2-interior-a-priori-estimate-lemma-15}
    \end{align}
    \begin{align}
        &\mathcal{I}_{4}=\int_{B_{2}\left(0\right)}\eta_{\mathfrak{i}}\left(-\Delta_{y}\right)^{-1}\left(\eta_{\mathfrak{i}}\phi\right)\nabla_{y}\left(\left(-\Delta_{y}\right)^{-1}\left(\bar{\eta}_{\mathfrak{i}}\phi\right)\right)\cdot\gradperp_{y}\left(\Gamma+a_{*}\right)^{\gamma}_{+}\ dy,\label{L2-interior-a-priori-estimate-lemma-16}
    \end{align}
    \begin{align}
        \mathcal{I}_{5}=\int_{B_{2}\left(0\right)}\eta_{\mathfrak{i}}G\phi\  \gradperp_{y}\left(\Gamma+a_{*}+a\right)\cdot\nabla_{y}\bar{\eta}_{\mathfrak{i}}\ dy,\label{L2-interior-a-priori-estimate-lemma-16a}
    \end{align}
    \begin{align}
        \mathcal{I}_{6}=-\int_{B_{2}\left(0\right)}\frac{\eta_{\mathfrak{i}}^{2}\bar{\eta}_{\mathfrak{i}}\phi^{2}}{2\left(\Gamma+a_{*}\right)^{\gamma}_{+}}\nabla_{y}\left(\Gamma+a_{*}\right)\cdot\gradperp_{y}\left(a\right)dy,\label{L2-interior-a-priori-estimate-lemma-16b}
    \end{align}
    \begin{align}
        \mathcal{I}_{7}=-\int_{B_{2}\left(0\right)}\frac{\phi^{2}}{2\gamma\left(\Gamma+a_{*}\right)^{\gamma-1}_{+}}\  \gradperp_{y}\left(\Gamma+a_{*}+a\right)\cdot\nabla_{y}\left(\eta_{\mathfrak{i}}^{2}\bar{\eta}_{\mathfrak{i}}\right) dy,\label{L2-interior-a-priori-estimate-lemma-16c}
    \end{align}
    where we have used integration by parts for \eqref{L2-interior-a-priori-estimate-lemma-16b}--\eqref{L2-interior-a-priori-estimate-lemma-16c}. We begin to estimate \eqref{L2-interior-a-priori-estimate-lemma-12} with the first term on the right hand side, and note
    \begin{align}
        &\int_{B_{2}\left(0\right)}\eta_{\mathfrak{i}}G\ \gradperp_{y}\left(\Gamma+a_{*}+a\right)\cdot\nabla_{y}\left(\gamma\left(\Gamma+a_{*}\right)^{\gamma-1}_{+}G\right)dy\nonumber\\
        &=-\frac{1}{2}\int_{B_{2}\left(0\right)}\gamma\left(\Gamma+a_{*}\right)^{\gamma-1}_{+}G^{2}\ \gradperp_{y}\left(\Gamma+a_{*}+a\right)\cdot\nabla_{y}\left(\eta_{\mathfrak{i}}\right)dy\nonumber\\
        &+\frac{1}{2}\int_{B_{2}\left(0\right)}\eta_{\mathfrak{i}}\gamma\left(\gamma-1\right)\left(\Gamma+a_{*}\right)^{\gamma-2}_{+}G^{2}\ \gradperp_{y}\left(a\right)\cdot\nabla_{y}\left(\Gamma+a_{*}\right)dy,\label{L2-interior-a-priori-estimate-lemma-17}
    \end{align}
    where we use integration by parts. For the first term on the right hand side of \eqref{L2-interior-a-priori-estimate-lemma-17}, recalling the definition of $G$ in \eqref{L2-interior-a-priori-estimate-lemma-1}, we have
    \begin{align}
        &\left|\int_{B_{2}\left(0\right)}\gamma\left(\Gamma+a_{*}\right)^{\gamma-1}_{+}G^{2}\left(y,t\right)\ \gradperp_{y}\left(\Gamma+a_{*}+a\right)\cdot\nabla_{y}\left(\eta_{\mathfrak{i}}\left(y,t\right)\right)dy\right|\label{L2-interior-a-priori-estimate-lemma-18}\\
        &\leq C\int_{B_{2}\left(0\right)}\frac{\eta_{\mathfrak{i}}^{2}\phi^{2}}{\gamma\left(\Gamma+a_{*}\right)^{\gamma-1}_{+}}\left|\nabla_{y}\left(\eta_{\mathfrak{i}}\left(y,t\right)\right)\right|dy\nonumber\\
        &+C\left(\frac{\varepsilon}{t}\right)^{\frac{\gamma-1}{4}}\int_{B_{2}\left(0\right)}\left|\left(-\Delta_{y}\right)^{-1}\left(\eta_{\mathfrak{i}}\phi\right)\right|^{2}\left|\nabla_{y}\left(\eta_{\mathfrak{i}}\left(y,t\right)\right)\right|dy.\nonumber
    \end{align}
    For the first term on the right hand side, we note that the integrand is supported on $\supp{\nabla_{y}\eta_{\mathfrak{i}}}$, where \eqref{derivative-of-inner-cutoff-outer-cutoff-comparison-bound-1} gives that $\eta_{\mathfrak{i}}\leq C\eta_{\mathfrak{e}}$. For the second term, we can once again use the fact that the integrand is supported on $\supp{\nabla_{y}\eta_{\mathfrak{i}}}\subset\supp{\eta_{\mathfrak{e}}}$, whence applying \eqref{f-prime-L2-inner-weight-comparison-bound}, and \eqref{weighted-L2-exterior-cutoff-support} gives the necessary bound on  $\mathscr{W}_{1}\left(\Gamma+a_{*}\right)$ to provide the powers of $\varepsilon$ and $t$ appearing in front of the integral. Finally, we can bound the $L^{2}$ norm of $\left(-\Delta_{y}\right)^{-1}\left(\eta_{\mathfrak{i}}\phi\right)$ in terms of the $L^{2}$ norm of $\eta_{i}\phi$ and use the compact support to control any logarithmic terms. Taking all this together gives us
    \begin{align}
        &\left|\int_{B_{2}\left(0\right)}\gamma\left(\Gamma+a_{*}\right)^{\gamma-1}_{+}G^{2}\left(y,t\right)\ \gradperp_{y}\left(\Gamma+a_{*}+a\right)\cdot\nabla_{y}\left(\eta_{\mathfrak{i}}\left(y,t\right)\right)dy\right|\label{L2-interior-a-priori-estimate-lemma-18a}\\
        &\leq C\left(\alpha_{\mathfrak{e}}\left(t\right)^{2}+\left(\frac{\varepsilon}{t}\right)^{\frac{\gamma-1}{4}}\|\left(\gamma\left(\Gamma+a_{*}\right)^{\gamma-1}_{+}\right)^{-\frac{1}{2}}\eta_{\mathfrak{i}}\phi\|_{L^{2}\left(\mathbb{R}^{2}\right)}^{2}\right).\nonumber
    \end{align}
    For the second term on the right hand side of \eqref{L2-interior-a-priori-estimate-lemma-17}, using \eqref{linear-transport-operator-a-bounds}, \eqref{f-prime-L2-inner-weight-comparison-bound}, and \eqref{weighted-L2-interior-cutoff-support}, we have the bound
    \begin{align}
        \left|\eta_{\mathfrak{i}}\left(y,t\right)\gamma\left(\gamma-1\right)\left(\Gamma+a_{*}\right)^{\gamma-2}_{+} \gradperp_{y}\left(a\right)\right|&\leq \frac{C\varepsilon^{\frac{11}{4}}}{t^{\frac{5}{4}}}\frac{\eta_{\mathfrak{i}}\left(y,t\right)\gamma\left(\Gamma+a_{*}\right)^{\gamma-1}_{+}}{\Gamma+a_{*}}\label{L2-interior-a-priori-estimate-lemma-19}\\
        &\leq\frac{C\varepsilon^{\frac{5}{2}}}{t}\eta_{\mathfrak{i}}\left(y,t\right)\gamma\left(\Gamma+a_{*}\right)^{\gamma-1}_{+}.\nonumber
    \end{align}
    Then, analogously to \eqref{L2-interior-a-priori-estimate-lemma-18}--\eqref{L2-interior-a-priori-estimate-lemma-18a}, we have
    \begin{align}
        &\left|\int_{B_{2}\left(0\right)}\eta_{\mathfrak{i}}\left(y,t\right)\gamma\left(\gamma-1\right)\left(\Gamma+a_{*}\right)^{\gamma-2}_{+}G^{2}\left(y,t\right)\ \gradperp_{y}\left(a\right)\cdot\nabla_{y}\left(\Gamma+a_{*}\right)dy\right|\label{L2-interior-a-priori-estimate-lemma-20}\\
        &\leq\frac{C\varepsilon^{\frac{5}{2}}}{t}\left(\|\left(\gamma\left(\Gamma+a_{*}\right)^{\gamma-1}_{+}\right)^{-\frac{1}{2}}\eta_{\mathfrak{i}}\phi\|_{L^{2}\left(\mathbb{R}^{2}\right)}^{2}+\alpha_{\mathfrak{e}}\left(t\right)^{2}\right).\nonumber
    \end{align}
    Similar considerations give us, for $j=1,\dots,7$,
    \begin{align}\nonumber
        \sum_{j=1}^{7}\left|\mathcal{I}_{j}\right|\leq \frac{C\varepsilon^{\frac{5}{2}}}{t}\|\left(\gamma\left(\Gamma+a_{*}\right)^{\gamma-1}_{+}\right)^{-\frac{1}{2}}\eta_{\mathfrak{i}}\phi\|_{L^{2}\left(\mathbb{R}^{2}\right)}^{2}+C\alpha_{\mathfrak{e}}\left(t\right)^{2}.
    \end{align}
    Finally, we estimate the last term on the left hand side of \eqref{L2-interior-a-priori-estimate-lemma-2} given by
    \begin{align}\nonumber
        \int_{B_{2}\left(0\right)}\left(\frac{\eta_{\mathfrak{i}}^{2}\phi}{\gamma\left(\Gamma+a_{*}\right)^{\gamma-1}_{+}}-\eta_{\mathfrak{i}}\left(-\Delta_{y}\right)^{-1}\left(\eta_{\mathfrak{i}}\phi\right)\right)E\ dy.
    \end{align}
    We again use \eqref{f-prime-L2-inner-weight-comparison-bound} as well as Young's inequality to obtain that
    \begin{align}
        &\left|\int_{B_{2}\left(0\right)}\left(\frac{\eta_{\mathfrak{i}}^{2}\phi}{\gamma\left(\Gamma+a_{*}\right)^{\gamma-1}_{+}}-\eta_{\mathfrak{i}}\left(-\Delta_{y}\right)^{-1}\left(\eta_{\mathfrak{i}}\phi\right)\right)E\ dy\right|\label{L2-interior-a-priori-estimate-lemma-30}\\
        &\leq \frac{C\varepsilon^{2}}{\left|\log{\varepsilon}\right|t}\|\left(\gamma\left(\Gamma+a_{*}\right)^{\gamma-1}_{+}\right)^{-\frac{1}{2}}\eta_{\mathfrak{i}}\phi\|_{L^{2}\left(\mathbb{R}^{2}\right)}^{2}+\frac{Ct\left|\log{\varepsilon}\right|}{\varepsilon^{2}}\left|\log{\varepsilon}\right|^{2}\left\|E\right\|_{\mathscr{Y}_{\infty}}^{2},\nonumber
    \end{align}
    Then, using the identities \eqref{L2-interior-a-priori-estimate-lemma-3}, \eqref{L2-interior-a-priori-estimate-lemma-7}, \eqref{L2-interior-a-priori-estimate-lemma-12}--\eqref{L2-interior-a-priori-estimate-lemma-16}, as well as bounds \eqref{L2-interior-a-priori-estimate-lemma-6}, \eqref{L2-interior-a-priori-estimate-lemma-10}, \eqref{L2-interior-a-priori-estimate-lemma-18}, and \eqref{L2-interior-a-priori-estimate-lemma-20}--\eqref{L2-interior-a-priori-estimate-lemma-30}, we have
    \begin{align}
        &\frac{\varepsilon^{2}}{2}\frac{d}{dt}\left(\int_{B_{2}\left(0\right)}\left(\frac{\eta_{\mathfrak{i}}^{2}\phi^{2}}{\gamma\left(\Gamma+a_{*}\right)^{\gamma-1}_{+}}-\left(\eta_{\mathfrak{i}}\phi\right)\left(-\Delta_{y}\right)^{-1}\left(\eta_{\mathfrak{i}}\phi\right)\right)dy\right)\nonumber\\
        &\geq-\frac{C\varepsilon^{2}}{t\left|\log{\varepsilon}\right|}\|\left(\gamma\left(\Gamma+a_{*}\right)^{\gamma-1}_{+}\right)^{-\frac{1}{2}}\eta_{\mathfrak{i}}\phi\|_{L^{2}\left(\mathbb{R}^{2}\right)}^{2}-\frac{Ct\left|\log{\varepsilon}\right|^{3}}{\varepsilon^{2}}\left\|E\right\|_{\mathscr{Y}_{\infty}}^{2}-C\alpha_{\mathfrak{e}}\left(t\right)^{2},
        \nonumber
    \end{align}
    Then integrating on $\left[t,T\right]$, and noting that $\phi\left(T\right)=0$ from \eqref{linear-transport-initial-data}, we have
    \begin{align}
        &\int_{B_{2}\left(0\right)}\left(\frac{\eta_{\mathfrak{i}}^{2}\phi^{2}}{\gamma\left(\Gamma+a_{*}\right)^{\gamma-1}_{+}}-\left(\eta_{\mathfrak{i}}\phi\right)\left(-\Delta_{y}\right)^{-1}\left(\eta_{\mathfrak{i}}\phi\right)\right)dy\leq\int_{t}^{T}C\varepsilon^{-2}\alpha_{\mathfrak{e}}\left(\tau\right)^{2}d\tau\label{L2-interior-a-priori-estimate-lemma-32}\\
        &+\int_{t}^{T}\frac{C}{\tau\left|\log{\varepsilon}\right|}\|\left(\gamma\left(\Gamma+a_{*}\right)^{\gamma-1}_{+}\right)^{-\frac{1}{2}}\eta_{\mathfrak{i}}\phi\|_{L^{2}\left(\mathbb{R}^{2}\right)}^{2} d\tau+\int_{t}^{T}\frac{C\tau\left|\log{\varepsilon}\right|^{3}}{\varepsilon^{4}}\left\|E\right\|_{\mathscr{Y}_{\infty}}^{2}d\tau.\nonumber
    \end{align}
    Using \eqref{f-prime-L2-inner-weight-comparison-bound}, we have
    \begin{align}
        \left|\int_{B_{2}\left(0\right)}\eta_{\mathfrak{i}}^{2}\phi^{2}\left(\frac{1}{\gamma\left(\Gamma+a_{*}\right)^{\gamma-1}_{+}}-\frac{1}{\gamma\Gamma^{\gamma-1}_{+}}\right)dy\right|\leq C\left(\frac{\varepsilon}{t}\right)^{\frac{3}{2}}\int_{B_{2}\left(0\right)}\frac{\eta_{\mathfrak{i}}^{2}\phi^{2}}{\gamma\Gamma^{\gamma-1}_{+}}dy.\label{L2-interior-a-priori-estimate-lemma-34}
    \end{align}
    Then applying Lemma \ref{quadratic-form-estimate-lemma} and \eqref{L2-interior-a-priori-estimate-lemma-34} to \eqref{L2-interior-a-priori-estimate-lemma-32}, we obtain
    \begin{align}
        \int_{B_{2}\left(0\right)}\frac{\eta_{\mathfrak{i}}^{2}\phi^{2}}{\gamma\Gamma^{\gamma-1}_{+}}dy+\mathcal{R}_{1}&\leq\int_{t}^{T}\frac{C}{\tau\left|\log{\varepsilon}\right|}\|\left(\gamma\left(\Gamma+a_{*}\right)^{\gamma-1}_{+}\right)^{-\frac{1}{2}}\eta_{\mathfrak{i}}\phi\|_{L^{2}\left(\mathbb{R}^{2}\right)}^{2} d\tau\label{L2-interior-a-priori-estimate-lemma-35}\\
        &+\int_{t}^{T}\frac{C\tau\left|\log{\varepsilon}\right|^{3}}{\varepsilon^{4}}\left\|E\right\|_{\mathscr{Y}_{\infty}}^{2}ds+\int_{t}^{T}C\varepsilon^{-2}\alpha_{\mathfrak{e}}\left(\tau\right)^{2}d\tau.\nonumber
    \end{align}
    where we recall from Lemma \ref{quadratic-form-estimate-lemma} that
    \begin{align}\nonumber
        \mathcal{R}_{1}=\bigO\left(\left\|\eta_{\mathfrak{e}}\bar{\phi}\right\|_{L^{2}\left(\mathbb{R}^{2}\right)}^{2}+\left\|\eta_{\mathfrak{e}}\bar{\phi}\right\|_{L^{2}\left(\mathbb{R}^{2}\right)}\left\|\eta_{\mathfrak{i}}\bar{\phi}\right\|_{L^{2}\left(\mathbb{R}^{2}\right)}\right).
    \end{align}
    Now define $\alpha_{\mathfrak{i}}$ by
    \begin{align}\nonumber
        \alpha_{\mathfrak{i}}\left(t\right)^{2}=\int_{B_{2}\left(0\right)}\frac{\eta_{\mathfrak{i}}^{2}\phi^{2}}{\gamma\Gamma^{\gamma-1}_{+}}dy.
    \end{align}
    Using \eqref{L2-interior-a-priori-estimate-lemma-34}, \eqref{L2-interior-a-priori-estimate-lemma-35}, , \eqref{f-prime-L2-inner-weight-comparison-bound}, Lemma \ref{L2-exterior-a-priori-estimate-lemma}, \eqref{weighted-Lr-norm-bounds-Lr-norm}, and recalling the definition \eqref{alpha-exterior-definition}, we have
    \begin{align}
        \alpha_{\mathfrak{i}}\left(t\right)^{2}&\leq \int_{t}^{T}\frac{C\tau\left|\log{\varepsilon}\right|^{3}}{\varepsilon^{4}}\left\|E\right\|_{\mathscr{Y}_{\infty}}^{2}d\tau+\frac{C\varepsilon^{\frac{\gamma-11}{4}}}{t^{\frac{\gamma-3}{4}}}\left(\int_{t}^{T}\left\|E\right\|_{\mathscr{Y}_{\infty}}d\tau\right)^{2}\label{L2-interior-a-priori-estimate-lemma-39}\\
        &+\int_{t}^{T}\frac{C\varepsilon^{\frac{\gamma-19}{4}}}{\tau^{\frac{\gamma-3}{4}}}\left(\int_{\tau}^{T}\left\|E\right\|_{\mathscr{Y}_{\infty}}d\tau'\right)^{2}d\tau+\int_{t}^{T}\frac{C}{\tau\left|\log{\varepsilon}\right|}\alpha_{\mathfrak{i}}\left(\tau\right)^{2}d\tau\nonumber\\
        &+\frac{C\varepsilon^{\frac{\gamma-11}{4}}}{t^{\frac{\gamma-3}{4}}}\left(\int_{t}^{T}\alpha_{\mathfrak{i}}\left(\tau\right)d\tau\right)^{2}+\int_{t}^{T}\frac{C\varepsilon^{\frac{\gamma-19}{4}}}{\tau^{\frac{\gamma-3}{4}}}\left(\int_{\tau}^{T}\alpha_{\mathfrak{i}}\left(\tau'\right)d\tau'\right)^{2}d\tau\nonumber\\
        &+C\alpha_{\mathfrak{i}}\left(t\right)\frac{\varepsilon^{\frac{\gamma-11}{4}}}{t^{\frac{\gamma-3}{4}}}\left(\int_{t}^{T}\left(\left\|E\right\|_{\mathscr{Y}_{\infty}}+\alpha_{\mathfrak{i}}\right)d\tau'\right).\nonumber
    \end{align}
    Now, using the bound \eqref{L2-interior-a-priori-estimate-lemma-error-bound-statement}, and the fact that the left hand side of \eqref{L2-interior-a-priori-estimate-lemma-39} is $0$ at $t=T$, as long as $\gamma\geq19$, we have that \eqref{L2-interior-a-priori-estimate-lemma-39} implies \eqref{L2-interior-a-priori-estimate-lemma-statement} for all $T_{0}>0$ large enough, but still independent of $\varepsilon>0$, via a standard continuation argument.
\end{proof}
Using Lemmas \ref{L2-exterior-a-priori-estimate-lemma} and \ref{L2-interior-a-priori-estimate-lemma}, we can get a direct estimate on solutions to the $L^{2}$ norm of solutions to \eqref{linear-transport-operator-system} satisfying assumptions \eqref{linear-transport-operator-phi-mass-conditions}--\eqref{linear-transport-operator-error-solution-supports}.
\begin{corollary}\label{L2-a-priori-estimate-corollary}
    Suppose the assumptions of Lemmas \ref{L2-exterior-a-priori-estimate-lemma} and \ref{L2-interior-a-priori-estimate-lemma} hold. Then for all $\varepsilon>0$ small enough, and all $T_{0}>0$ large enough, we have for all $t\in\left[T_{0},T\right]$,
    \begin{align} \nonumber
        \left\|\phi\right\|_{L^{2}\left(\mathbb{R}^{2}\right)}\leq\frac{C\varepsilon^{3-\sigma_{0}}\left|\log{\varepsilon}\right|^{\frac{3}{2}}}{t^{\frac{3}{2}}},
    \end{align}
    for some absolute constant $C>0$ and $\sigma_{0}>0$ defined in \eqref{L2-interior-a-priori-estimate-lemma-error-bound-statement}.
\end{corollary}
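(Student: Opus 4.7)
The plan is to combine Lemmas \ref{L2-exterior-a-priori-estimate-lemma} and \ref{L2-interior-a-priori-estimate-lemma} by splitting $\phi$ into its interior and exterior parts via the cutoffs $\eta_{\mathfrak{i}}$ and $\eta_{\mathfrak{e}}$. Since $\phi(\cdot,t)$ is supported in $B_2(0)$ by \eqref{linear-transport-operator-error-solution-supports}, and $1\leq \eta_{\mathfrak{i}}+\eta_{\mathfrak{e}}\leq 2$ on $B_4(0)$ by \eqref{interior-exterior-cutoff-sum-bounds}, we can estimate
\begin{align*}
\|\phi\|_{L^2(\mathbb{R}^2)} \leq \|(\eta_{\mathfrak{i}}+\eta_{\mathfrak{e}})\phi\|_{L^2(\mathbb{R}^2)} \leq \|\eta_{\mathfrak{i}}\phi\|_{L^2(\mathbb{R}^2)} + \|\eta_{\mathfrak{e}}\phi\|_{L^2(\mathbb{R}^2)},
\end{align*}
so it suffices to bound each piece by the right-hand side of the desired inequality.

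For the interior piece, I would use that $\gamma\Gamma^{\gamma-1}_{+}$ is bounded above by an absolute constant on $\supp \eta_{\mathfrak{i}} \subset B_1(0)$, hence $\|\eta_{\mathfrak{i}}\phi\|_{L^2(\mathbb{R}^2)} \leq C\|(\gamma\Gamma^{\gamma-1}_{+})^{-1/2}\eta_{\mathfrak{i}}\phi\|_{L^2(\mathbb{R}^2)}$, and Lemma \ref{L2-interior-a-priori-estimate-lemma} immediately gives the target bound $C\varepsilon^{3-\sigma_0}|\log\varepsilon|^{3/2}t^{-3/2}$.

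For the exterior piece, I would apply estimate \eqref{L2-exterior-a-priori-estimate-lemma-statement-2} of Lemma \ref{L2-exterior-a-priori-estimate-lemma}, inserting the assumed bound $\|E\|_{\mathscr{Y}_\infty} \leq \varepsilon^{5-\sigma_0}\tau^{-5/2}$ from \eqref{L2-interior-a-priori-estimate-lemma-error-bound-statement} together with the interior estimate already obtained. The two resulting time integrals evaluate, respectively, as $\int_t^T \varepsilon^{5-\sigma_0}\tau^{-5/2}\,d\tau \leq C\varepsilon^{5-\sigma_0}t^{-3/2}$ and $\int_t^T \varepsilon^{3-\sigma_0}|\log\varepsilon|^{3/2}\tau^{-3/2}\,d\tau \leq C\varepsilon^{3-\sigma_0}|\log\varepsilon|^{3/2}t^{-1/2}$, so multiplying by the prefactor $C\varepsilon^{\gamma/4-3/2}t^{-(\gamma/4-1/2)}$ yields
\begin{align*}
\|\eta_{\mathfrak{e}}\phi\|_{L^2(\mathbb{R}^2)} \leq \frac{C\varepsilon^{\gamma/4 + 3/2 - \sigma_0}|\log\varepsilon|^{3/2}}{t^{\gamma/4}}.
\end{align*}

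Since $\gamma\geq 19$, the exterior bound has extra powers of $\varepsilon/t$ compared to the interior bound, so it is clearly dominated by $C\varepsilon^{3-\sigma_0}|\log\varepsilon|^{3/2}t^{-3/2}$ and the proof concludes by adding the two estimates. There is no genuine obstacle here: the corollary is essentially bookkeeping that repackages the two lemmas, with the only mild point being to verify that the exterior estimate's logarithmic-type loss of $\varepsilon$-power (encoded in the factor $\varepsilon^{\gamma/4-3/2}$) is more than compensated by the choice $\gamma\geq 19$, exactly as in the closing argument of Lemma \ref{L2-interior-a-priori-estimate-lemma}.
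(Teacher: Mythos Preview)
Your proposal is correct and is exactly the natural way to combine the two lemmas; the paper itself gives no proof beyond stating that the corollary follows ``using Lemmas \ref{L2-exterior-a-priori-estimate-lemma} and \ref{L2-interior-a-priori-estimate-lemma},'' and your argument is the obvious reconstruction. The only point worth noting is that the domination of the exterior piece by the interior target really just needs $(\varepsilon/t)^{\gamma/4-3/2}\leq C$, which holds for any $\gamma>6$ once $\varepsilon\leq t$; the stronger hypothesis $\gamma\geq 19$ was already used inside the proof of Lemma \ref{L2-interior-a-priori-estimate-lemma} to close the bootstrap, not here.
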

We can also use \ref{poisson-equation-zero-mass-estimates-lemma} alongside Lemmas \ref{L2-exterior-a-priori-estimate-lemma} and \ref{L2-interior-a-priori-estimate-lemma} to gain $L^{r}$ estimates on $\phi$, some $2<r<\infty$, $L^{\infty}$ estimates on $\phi$ and $\psi$, and H{\"o}lder estimates on $\nabla\psi$, for $\phi$ solving \eqref{linear-transport-operator-system} and satisfying assumptions \eqref{linear-transport-operator-phi-mass-conditions}--\eqref{linear-transport-operator-error-solution-supports}. We recall that here $\psi=\left(-\Delta_{y}\right)^{-1}\left(\phi\right)$.
\begin{corollary}\label{L-infinity-a-priori-estimate-corollary}
    Suppose the assumptions of Lemmas \ref{L2-exterior-a-priori-estimate-lemma} and \ref{L2-interior-a-priori-estimate-lemma} hold. Then for all $\varepsilon>0$ small enough, and all $T_{0}>0$ large enough, we have for all $t\in\left[T_{0},T\right]$,
    \begin{align}
        \left|\psi\right|+\left|\nabla\psi\right|+\left[\nabla\psi\right]_{\beta}\left(y\right)&\leq \frac{C\varepsilon^{3-\sigma_{1}}}{t^{\frac{3}{2}-\sigma_{1}}},\label{L-infinity-a-priori-estimate-corollary-statement-2}\\
        t\varepsilon^{-2}\left\|\phi\right\|_{L^{r_{*}}\left(\mathbb{R}^{2}\right)}+\left\|\phi\right\|_{L^{\infty}\left(\mathbb{R}^{2}\right)}&\leq \frac{C\varepsilon^{1-\sigma_{1}}}{t^{\frac{1}{2}-\sigma_{1}}}\label{L-infinity-a-priori-estimate-corollary-statement-4},
    \end{align}
    for some $2<r_{*}<\infty$, some absolute constant $C>0$, some small $\sigma_{1}>0$.
\end{corollary}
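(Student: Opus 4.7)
The plan is to upgrade the $L^2$ bound of Corollary \ref{L2-a-priori-estimate-corollary} to pointwise bounds on $\phi$, $\psi$, and $\nabla\psi$ by combining the characteristic representation of $\phi$ derived in the proof of Lemma \ref{L2-exterior-a-priori-estimate-lemma} with the interpolation estimate of Lemma \ref{phi-interpolation-estimate-lemma}, in a bootstrap argument. Starting from formula \eqref{L2-exterior-a-priori-estimate-lemma-1}, namely
$$\phi(y,t) = \varepsilon^{-2}\int_t^T \Bigl(-E(\bar y) + \kappa_j \nabla^\perp_{\bar y}(\Gamma+a_*+a)\cdot\nabla_{\bar y}\bigl(\gamma(\Gamma+a_*)^{\gamma-1}_+\psi(\bar y)\bigr)\Bigr) d\tau,$$
the $E$-term is controlled directly by the hypothesis \eqref{L2-interior-a-priori-estimate-lemma-error-bound-statement} together with the pointwise bound on $\gamma(\Gamma+a_*)^{\gamma-1}_+$, yielding a contribution of order $\varepsilon^{3-\sigma_0}/t^{3/2}$ after integration in $\tau$.

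For the transport term I would expand the gradient, using that $\nabla^\perp(\Gamma+a_*)$ annihilates any function of $\Gamma+a_*$. What survives is a piece of the form $\gamma(\Gamma+a_*)^{\gamma-1}_+ \nabla^\perp(\Gamma+a_*+a)\cdot\nabla\psi$, together with lower order pieces involving $\nabla^\perp a$ paired against $\psi$ and the factor $\gamma(\gamma-1)(\Gamma+a_*)^{\gamma-2}_+\nabla(\Gamma+a_*)$. Each of these is estimated pointwise by Lemma \ref{phi-interpolation-estimate-lemma},
$$|\psi(y)|+|\nabla\psi(y)|+[\nabla\psi]_\beta(y) \leq C_\sigma \|\phi\|_{L^\infty(\mathbb{R}^2)}^\sigma \|\phi\|_{L^2(\mathbb{R}^2)}^{1-\sigma},$$
with $\sigma>0$ small, combined with the $L^2$ bound from Corollary \ref{L2-a-priori-estimate-corollary}. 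Setting $M(t)=\|\phi(\cdot,t)\|_{L^\infty(\mathbb{R}^2)}$, the characteristic formula yields an integral inequality of the schematic form
$$M(t) \leq C\frac{\varepsilon^{3-\sigma_0}|\log\varepsilon|^{3/2}}{t^{3/2}} + C\varepsilon^{-2}\int_t^T M(\tau)^\sigma \Bigl(\frac{\varepsilon^{3-\sigma_0}|\log\varepsilon|^{3/2}}{\tau^{3/2}}\Bigr)^{1-\sigma}\! d\tau + (\textrm{lower order}),$$
which, since the nonlinear self-feedback is sublinear in $M$, closes under the ansatz $M(\tau)\leq C\varepsilon^{1-\sigma_1}/\tau^{1/2-\sigma_1}$ by a standard continuation argument, provided $\sigma_1$ is taken slightly larger than $\sigma_0$. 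The logarithmic factor $|\log\varepsilon|^{3/2}$ is absorbed into the slack $\varepsilon^{-\sigma_1}$.

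Once the $L^\infty$ bound on $\phi$ is in hand, the bound \eqref{L-infinity-a-priori-estimate-corollary-statement-2} on $\psi$, $\nabla\psi$, and $[\nabla\psi]_\beta$ follows from one more application of Lemma \ref{phi-interpolation-estimate-lemma}, with the interpolation parameter $\sigma$ chosen small enough that the exponents in $\varepsilon$ and $t^{-1}$ deviate from $3$ and $3/2$ by at most $\sigma_1$. For the $L^{r_*}$ bound in \eqref{L-infinity-a-priori-estimate-corollary-statement-4}, since $\supp\phi(\cdot,t)\subset B_2(0)$ by \eqref{linear-transport-operator-error-solution-supports}, I would use the elementary interpolation $\|\phi\|_{L^{r_*}}\leq \|\phi\|_{L^2}^{2/r_*}\|\phi\|_{L^\infty}^{1-2/r_*}$ with $r_*$ chosen close enough to $2$ that the combined exponent of $\varepsilon/t$ is within $\sigma_1$ of $3$ and $3/2$; multiplying by $t\varepsilon^{-2}$ then gives exactly the claim.

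The main obstacle is closing the bootstrap cleanly: the a priori $L^\infty$ ansatz on $\phi$ used to estimate $\nabla\psi$ via Lemma \ref{phi-interpolation-estimate-lemma} must be returned with an equal or smaller constant after passing through the characteristic integral, and the small loss $\sigma_1>0$ in the final bound is precisely the price paid for using the sublinear interpolation $M^\sigma$. A secondary technical point is the contribution involving $\nabla a$, for which \eqref{linear-transport-operator-a-bounds-0} only supplies an $L^{r_1}$ norm; here one uses the pointwise bound \eqref{linear-transport-operator-a-bounds} together with the compact support of the remaining factors to close the pointwise estimate.
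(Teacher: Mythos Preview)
Your proposal is correct and follows essentially the same route as the paper: represent $\phi$ via the characteristic formula \eqref{L2-exterior-a-priori-estimate-lemma-1}, control the $\psi$-dependent terms pointwise through the interpolation Lemma \ref{phi-interpolation-estimate-lemma} combined with the $L^{2}$ bound of Corollary \ref{L2-a-priori-estimate-corollary}, and close an $L^{\infty}$ bootstrap with a small loss $\sigma_{1}$. The paper's proof is only a two-line sketch pointing to exactly these two ingredients, and your write-up supplies the details consistently with that sketch.
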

\begin{proof}
    The proof follows by representing the solution $\phi$ to \eqref{linear-transport-operator-system} using the characteristics defined in \eqref{characteristics-ode-system}, and then applying Lemma \ref{phi-interpolation-estimate-lemma}.
\end{proof}
\subsection{A Priori Estimates on a Projected Linear Problem}\label{projected-transport-problem-a-priori-estimates-section}
In view of \eqref{tilde-phi-star-R-initial-value-problem}, we want to show a priori estimates on the coefficients of projection in the following linear problem
\begin{subequations}\label{projected-linear-transport-operator-system}
\begin{align}
&\varepsilon^{2}\phi_{t}+\gradperp_{y}\left(\Gamma+a_{*}+a\right)\cdot\nabla_{y}\left(\phi-\gamma\left(\Gamma+a_{*}\right)^{\gamma-1}_{+}\psi\right)+E=c_{j1}\left(t\right)\dell_{1}U(y)\nonumber\\
&+c_{j2}\left(t\right)\dell_{2}U(y)\ \ \text{in}\ \mathbb{R}^{2}\times \left[T_{0},T\right],\label{projected-linear-transport-equation}\\
&\phi(\blank,T)=0\ \ \text{in}\ \mathbb{R}^{2},\label{projected-linear-transport-initial-data}\\
&\phi_{*j}\to0,\ \left|y\right|\to\infty,\ \forall t\in\left[T_{0},T\right],\label{projected-linear-transport-boundary-condition}
\end{align}
\end{subequations}
with the same assumptions on $\phi,a,a_{*}$, and $E$ as in \eqref{linear-transport-operator-phi-mass-conditions}--\eqref{linear-transport-operator-error-solution-supports}.

\medskip
To obtain a priori estimates for the coefficients of projection in \eqref{projected-linear-transport-operator-system}, we also need an extra assumption on $a_{*}$ that we justify here.

\medskip
We note from \eqref{homotopic-operator-inner-error-E-star-R-definition-10} that the highest order terms from $\mathfrak{a}_{j}$ in terms of both $\varepsilon$ and $t$ come from the mode $2$ terms $\mathcal{E}^{\left(ji\right)}_{2}$, $i\neq j$, and $\psi_{j1}^{\left(2\right)}$, both of order $\varepsilon^{2}t^{-1}$. However, one can see the cancellation of these terms that occurs when one calculates $\Delta_{y}\left(\Gamma+\mathfrak{a}_{j}\right)+\left(\Gamma+\mathfrak{a}_{j}\right)^{\gamma}_{+}$ directly, and so since $a_{*}$ represents the part of $\mathfrak{a}_{j}$ that does not depend on any unknowns, we can assume
\begin{align}
    \left\|\Delta_{y}\left(\Gamma+a_{*}\right)+\left(\Gamma+a_{*}\right)^{\gamma}_{+}\right\|_{C^{1}}\leq\frac{\varepsilon^{2+\nu}}{t^{\frac{3}{2}}},\label{a-star-extra-assumption-statement-1}    
\end{align}
for some $\nu>3/4$.

\medskip
Now we state and prove a priori estimates on $\left(c_{j1},c_{j2}\right)$.
\begin{lemma}\label{projected-linear-transport-problem-coefficients-of-projection-a-priori-estimates-lemma}
    Suppose $\phi$ solves \eqref{projected-linear-transport-operator-system} with assumptions \eqref{linear-transport-operator-phi-mass-conditions}--\eqref{linear-transport-operator-error-solution-supports}, as well as the size assumption \eqref{L2-interior-a-priori-estimate-lemma-error-bound-statement} on $E$ from Lemma \ref{L2-interior-a-priori-estimate-lemma}, and the extra assumptions on $a_{*}$, \eqref{a-star-extra-assumption-statement-1}. Then for all $\varepsilon>0$ small enough, and all $T_{0}>0$ large enough, we have that for all $t\in\left[T_{0},T\right]$ that $\left(c_{j0},c_{j1},c_{j2}\right)$ satisfies equations of the form
    \begin{align}
        -Mc_{ji}&=\int_{\mathbb{R}^{2}}y_{i}E\ dy+\bigO{\left(\frac{\varepsilon^{2+\nu}}{t^{\frac{3}{2}}}\right)}\left\|\phi\right\|_{L^{2}\left(\mathbb{R}^{2}\right)},\label{projected-linear-transport-problem-coefficients-of-projection-a-priori-estimates-lemma-error-form-1}
    \end{align}
    where $M$ is the absolute constant defined in \eqref{vortex-mass-definition}, for $i=1,2$, and $\nu>3/4$.
\end{lemma}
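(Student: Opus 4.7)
The plan is to multiply the projected equation \eqref{projected-linear-transport-equation} by $y_{i}$ for $i=1,2$ and integrate over $\mathbb{R}^{2}$, using the centre-of-mass conditions in \eqref{linear-transport-operator-phi-mass-conditions} to kill the time-derivative term and the identity $\int y_{i}\dell_{k}U\,dy=-M\delta_{ik}$ (which follows by integration by parts from the radiality of $U$) to extract $-Mc_{ji}$ from the right-hand side. For the transport term, the identity
\begin{align*}
y_{i}\nabla^{\perp}G\cdot\nabla H=\dive\!\bigl(y_{i}H\nabla^{\perp}G\bigr)-H(\nabla^{\perp}G)_{i},
\end{align*}
together with the compact supports of $\phi$ and $(\Gamma+a_{*})^{\gamma-1}_{+}$ (killing boundary contributions), reduces the lemma to the single estimate
\begin{align*}
\mathcal{I}\coloneqq\int_{\mathbb{R}^{2}}\bigl(\nabla^{\perp}(\Gamma+a_{*}+a)\bigr)_{i}\bigl(\phi-\gamma(\Gamma+a_{*})^{\gamma-1}_{+}\psi\bigr)\,dy = O\!\left(\tfrac{\varepsilon^{2+\nu}}{t^{3/2}}\right)\|\phi\|_{L^{2}(\mathbb{R}^{2})}.
\end{align*}

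The key observation is that $(\nabla^{\perp}F)_{i}$, with $F\coloneqq\Gamma+a_{*}$, is an approximate kernel element of the linearised operator $-\Delta-\gamma F^{\gamma-1}_{+}$. Writing hypothesis \eqref{a-star-extra-assumption-statement-1} as $-\Delta F=F^{\gamma}_{+}+R$ with $\|R\|_{C^{1}}\leq C\varepsilon^{2+\nu}/t^{3/2}$ and using that $\Delta$ commutes with $\nabla^{\perp}$, one finds in the weak sense
\begin{align*}
-\Delta(\nabla^{\perp}F)_{i}=\gamma F^{\gamma-1}_{+}(\nabla^{\perp}F)_{i}+(\nabla^{\perp}R)_{i}.
\end{align*}
Substituting $\phi=-\Delta\psi$ into the $F$-part of $\mathcal{I}$ and integrating by parts twice—whose boundary contributions vanish thanks to the logarithmic growth of $F$ at infinity (giving $\nabla F=O(|y|^{-1})$) and the centre-of-mass-induced decay $\psi=O(|y|^{-2})$ obtained from the first-moment cancellations in \eqref{linear-transport-operator-phi-mass-conditions}—produces the crucial cancellation
\begin{align*}
\int_{\mathbb{R}^{2}}(\nabla^{\perp}F)_{i}\bigl(\phi-\gamma F^{\gamma-1}_{+}\psi\bigr)\,dy=\int_{\mathbb{R}^{2}}(\nabla^{\perp}R)_{i}\psi\,dy,
\end{align*}
whose right-hand side is controlled by the $C^{1}$ bound on $R$ together with $\|\psi\|_{L^{\infty}}\leq C\|\phi\|_{L^{2}}$ from Lemma \ref{poisson-equation-zero-mass-estimates-lemma}, producing exactly the claimed $O(\varepsilon^{2+\nu}/t^{3/2})\|\phi\|_{L^{2}}$.

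The residual $a$-contribution to $\mathcal{I}$, namely $\int(\nabla^{\perp}a)_{i}\phi\,dy-\int(\nabla^{\perp}a)_{i}\gamma F^{\gamma-1}_{+}\psi\,dy$, is handled by direct H\"older estimates exploiting $\|\nabla a\|_{L^{r_{1}}}\leq\varepsilon^{2+\nu}/t^{3/2}$ from \eqref{linear-transport-operator-a-bounds-0}: in the first, the compactness of $\supp\phi\subset B_{2}(0)$ compensates the conjugate exponent $r_{1}'<2$ so that $\|\phi\|_{L^{r_{1}'}}\leq C\|\phi\|_{L^{2}}$, and in the second the compactness of $\supp F^{\gamma-1}_{+}$ combines with the $L^{\infty}$ bound on $\psi$ to give the same order. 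The main technical obstacle is justifying the two integrations by parts used in the key cancellation: since $\Gamma$ is only piecewise smooth, with a jump in its second derivative across $\{|y|=1\}$, $F$ is merely $C^{1,1}$, so $-\Delta F=F^{\gamma}_{+}+R$ must be interpreted weakly and the identity for $(\nabla^{\perp}F)_{i}$ read in the sense of distributions; however $\nabla F$ is Lipschitz and $\psi\in W^{2,p}$ by Lemma \ref{poisson-equation-zero-mass-estimates-lemma}, so the pairing is legitimate (via an approximation argument if needed) and the argument closes.
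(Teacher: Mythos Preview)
Your proposal is correct and follows essentially the same route as the paper: test against $y_{i}$, kill $\partial_{t}\phi$ via the moment conditions, extract $-Mc_{ji}$ from the projection, integrate the transport term by parts to reduce to $\int(\nabla^{\perp}(\Gamma+a_{*}+a))_{i}(\phi-\gamma(\Gamma+a_{*})^{\gamma-1}_{+}\psi)$, and then exploit the near-kernel identity $\partial_{y_{2}}(\Delta(\Gamma+a_{*})+(\Gamma+a_{*})^{\gamma}_{+})$ together with the $L^{r_{1}}$ bound on $\nabla a$. You are in fact more careful than the paper about the boundary terms and the regularity of $\Gamma$; note only that your final estimate $\bigl|\int(\nabla^{\perp}R)_{i}\psi\,dy\bigr|\leq C\|R\|_{C^{1}}\|\psi\|_{L^{\infty}}$ tacitly uses that $R=\Delta(\Gamma+a_{*})+(\Gamma+a_{*})^{\gamma}_{+}$ is compactly supported (true here since $\Delta\Gamma=-\Gamma^{\gamma}_{+}$, the $\mathcal{E}^{(ji)}_{2}$ are harmonic, and $\Delta\psi_{j}=-\phi_{j}$ has compact support), which the paper also leaves implicit.
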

\begin{proof}
    We test \eqref{projected-linear-transport-equation} with $y_{1}$, and integrate on $\mathbb{R}^{2}$. The first term on the left hand side will again disappear due to \eqref{linear-transport-operator-phi-mass-conditions}. The first and third terms on the right hand side will disappear because multiplying by $y_{1}$ will make the integrands odd in $y_{1}$. We are left with
    \begin{align}
        &\int_{\mathbb{R}^{2}}y_{1}E\ dy+\int_{\mathbb{R}^{2}}y_{1}\gradperp_{y}\left(\Gamma+a_{*}+a\right)\cdot\nabla_{y}\left(\phi-\gamma\left(\Gamma+a_{*}\right)^{\gamma-1}_{+}\psi\right)dy\label{projected-linear-transport-problem-coefficients-of-projection-a-priori-estimates-lemma-3}\\
        &=c_{j1}\left(\int_{\mathbb{R}^{2}}y_{1}\dell_{1}U(y)dy\right)=-Mc_{j1},\nonumber
    \end{align}
    where the last equality is due to integration by parts.

    \medskip
    For the second term on the left hand side of \eqref{projected-linear-transport-problem-coefficients-of-projection-a-priori-estimates-lemma-3}, we use integration by parts to obtain
    \begin{align}
        &\int_{\mathbb{R}^{2}}y_{1}\gradperp_{y}\left(\Gamma+a_{*}+a\right)\cdot\nabla_{y}\left(\phi-\gamma\left(\Gamma+a_{*}\right)^{\gamma-1}_{+}\psi\right)dy\label{projected-linear-transport-problem-coefficients-of-projection-a-priori-estimates-lemma-5}\\
        &=\int_{\mathbb{R}^{2}}\dell_{y_{2}}\left(\Gamma+a_{*}\right)\left(\Delta_{y}\psi+\gamma\left(\Gamma+a_{*}\right)^{\gamma-1}_{+}\psi\right)dy+\bigO{\left(\frac{\varepsilon^{2+\nu}}{t^{\frac{3}{2}}}\right)}\left\|\phi\right\|_{L^{2}\left(\mathbb{R}^{2}\right)}.\nonumber
    \end{align}
    Here we have once again used the compact support of the integrand to see that all boundary terms are $0$. Then we used that fact that $-\Delta_{y}\psi=\phi$ to obtain the first term on the right hand side of \eqref{projected-linear-transport-problem-coefficients-of-projection-a-priori-estimates-lemma-5}. Finally, we used assumption \eqref{linear-transport-operator-a-bounds-0}, and Lemma \ref{poisson-equation-zero-mass-estimates-lemma} on $a$ to obtain the last term on the right hand side of \eqref{projected-linear-transport-problem-coefficients-of-projection-a-priori-estimates-lemma-5}.

    \medskip
    Concentrating on the first term on the right hand side of \eqref{projected-linear-transport-problem-coefficients-of-projection-a-priori-estimates-lemma-5}, and once again using integration by parts, we obtain that it is equal to
    \begin{align}
        \int_{\mathbb{R}^{2}}\psi\dell_{y_{2}}\left(\Delta_{y}\left(\Gamma+a_{*}\right)+\left(\Gamma+a_{*}\right)^{\gamma}_{+}\right)dy=\bigO{\left(\frac{\varepsilon^{2+\nu}}{t^{\frac{3}{2}}}\right)}\left\|\phi\right\|_{L^{2}\left(\mathbb{R}^{2}\right)},\label{projected-linear-transport-problem-coefficients-of-projection-a-priori-estimates-lemma-6}
    \end{align}
    where we have used Lemma \ref{poisson-equation-zero-mass-estimates-lemma} and \eqref{a-star-extra-assumption-statement-1} to obtain the right hand side of \eqref{projected-linear-transport-problem-coefficients-of-projection-a-priori-estimates-lemma-6}.

    \medskip
    Finally, we test \eqref{projected-linear-transport-equation} against $y_{2}$ and integrate. Once again the first term on the left hand side disappears due to assumption \eqref{linear-transport-operator-phi-mass-conditions}. The first and second terms on the right hand side disappears because the integrands become odd in $y_{2}$. We obtain
    \begin{align}
        &\int_{\mathbb{R}^{2}}y_{2}E\ dy+\int_{\mathbb{R}^{2}}y_{2}\gradperp_{y}\left(\Gamma+a_{*}+a\right)\cdot\nabla_{y}\left(\phi-\gamma\left(\Gamma+a_{*}\right)^{\gamma-1}_{+}\psi\right)dy\label{projected-linear-transport-problem-coefficients-of-projection-a-priori-estimates-lemma-7}\\
        &=c_{j2}\left(\int_{\mathbb{R}^{2}}y_{2}\dell_{2}U(y)dy\right)=-Mc_{j2},\nonumber
    \end{align}
    once again using integration by parts for the last equality.

    \medskip
    Analogously to \eqref{projected-linear-transport-problem-coefficients-of-projection-a-priori-estimates-lemma-5}--\eqref{projected-linear-transport-problem-coefficients-of-projection-a-priori-estimates-lemma-6}, we have for the second term on the left hand side of \eqref{projected-linear-transport-problem-coefficients-of-projection-a-priori-estimates-lemma-7} that
    \begin{align}
        \int_{\mathbb{R}^{2}}y_{2}\gradperp_{y}\left(\Gamma+a_{*}+a\right)\cdot\nabla_{y}\left(\phi-\gamma\left(\Gamma+a_{*}\right)^{\gamma-1}_{+}\psi\right)dy=\bigO{\left(\frac{\varepsilon^{2+\nu}}{t^{\frac{3}{2}}}\right)}\left\|\phi\right\|_{L^{2}\left(\mathbb{R}^{2}\right)}.\label{projected-linear-transport-problem-coefficients-of-projection-a-priori-estimates-lemma-10}
    \end{align}
    Noting that $\left(c_{j1},c_{j2}\right)$ satisfy the linear system given by \eqref{projected-linear-transport-problem-coefficients-of-projection-a-priori-estimates-lemma-3} and \eqref{projected-linear-transport-problem-coefficients-of-projection-a-priori-estimates-lemma-7}, and we have estimates on the coefficients and errors for the linear system given by \eqref{projected-linear-transport-problem-coefficients-of-projection-a-priori-estimates-lemma-5}, \eqref{projected-linear-transport-problem-coefficients-of-projection-a-priori-estimates-lemma-6}, and \eqref{projected-linear-transport-problem-coefficients-of-projection-a-priori-estimates-lemma-10} respectively, we obtain a linear system for $\left(c_{j1},c_{j2}\right)$ of the form \eqref{projected-linear-transport-problem-coefficients-of-projection-a-priori-estimates-lemma-error-form-1}, as required.
\end{proof}
\subsection{A Priori Estimates on the Poisson Equation}
We now obtain a priori estimates on solutions $\psi^{out}_{*}$ to \eqref{psi-out-star-boundary-value-problem-equation} given by
\begin{align}
    -\Delta_{x}\psi_{*}^{out}=\lambda\sum_{j=1}^{3}\kappa_{j}\left(\psi_{*j}\Delta_{x}\eta_{j}+2\nabla_{x}\eta_{j}\cdot\nabla_{x}\psi_{*j}\right)+\lambda E_{2}^{out}\left(\psi^{in},\psi^{out};\xi\right)\ \ \text{in}\ \mathbb{R}^{2}\times \left[T_{0},T\right].\label{psi-out-star-equation-2}
\end{align}
To find good a priori estimates on $\psi_{*}^{out}$ satisfying the above equation, we need estimates on the source term on the right hand side. First note that \eqref{first-approximation-outer-error-E2out-final-bound} gives $\lambda E_{2}^{out}\left(\psi^{in},\psi^{out};\xi\right)=\bigO{\left(\lambda\varepsilon^{6}t^{-4}\right)}$.

\medskip
Next, we proceed analogously to \eqref{psi-out-equation}--\eqref{psi-out-grad-y-bound} and note that for $\left|y\right|\geq C\varepsilon^{-1}t^{\frac{1}{2}}$ and for each $j$ we have
\begin{align*}
        \psi_{*j}\left(y\right)=-\frac{1}{4\pi}\int_{B_{2}\left(0\right)}\log\left(1-\frac{2z\cdot y}{\left|y\right|^{2}}+\frac{\left|z\right|^{2}}{\left|y\right|^{2}}\right)\phi_{*j}\left(z\right)dz-\frac{\log{\left|y\right|}}{2\pi}\int_{B_{2}\left(0\right)}\phi_{*j}\left(z\right)dz.
\end{align*}
The second term on the right hand side above immediately disappears due to the $0$ mass condition from \eqref{linear-transport-operator-phi-mass-conditions}. Next, expanding the logarithm in the first integral, we have
\begin{align*}
    \psi_{*j}\left(y\right)=-\frac{y}{2\pi\left|y\right|^{2}}\cdot\int_{B_{2}\left(0\right)}z\phi_{*j}\left(z\right)dz+\int_{B_{2}\left(0\right)}\bigO{\left(\left|y\right|^{-2}\right)}\phi_{*j}\left(z\right)dz.
\end{align*}
This time, due to the centre of mass conditions in \eqref{linear-transport-operator-phi-mass-conditions}, we have that the first term on the right hand side is $0$, which means that for $\left|y\right|\geq C\varepsilon^{-1}t^{\frac{1}{2}}$, we have
\begin{align*}
    \left|\psi_{*j}\left(y\right)\right|\leq \frac{C\varepsilon^{2}}{t}\left\|\phi_{*j}\right\|_{L^{2}\left(\mathbb{R}^{2}\right)},\quad \frac{1}{\varepsilon}\left|\nabla_{y}\psi_{*j}\left(y\right)\right|\leq \frac{C\varepsilon^{2}}{t^{\frac{3}{2}}}\left\|\phi_{*j}\right\|_{L^{2}\left(\mathbb{R}^{2}\right)}.
\end{align*}
Thus, after once again noting that $\Delta_{x}\eta_{j}=\bigO{\left(t^{-1}\right)}$ and $\nabla_{j}\eta_{j}=\bigO{\left(t^{-\frac{1}{2}}\right)}$, the right hand side of \eqref{psi-out-star-equation-2} has estimate
\begin{align}
    &\left|\lambda\sum_{j=1}^{3}\kappa_{j}\left(\psi_{*j}\Delta_{x}\eta_{j}+2\nabla_{x}\eta_{j}\cdot\nabla_{x}\psi_{*j}\right)+\lambda E_{2}^{out}\left(\psi^{in},\psi^{out};\xi\right)\right|\nonumber\\
    &\leq C\lambda\left(\frac{\varepsilon^{2}}{t^{2}}\sum_{j=1}^{3}\left\|\phi_{*j}\right\|_{L^{2}\left(\mathbb{R}^{2}\right)}+\frac{\varepsilon^{6}}{t^{4}}\right).\label{psi-out-star-equation-3}
\end{align}
Then \eqref{psi-out-star-equation-3} gives
\begin{align}
    \left\|\left(\log{\left(\left|\cdot\right|+2\right)}\right)^{-1}\psi_{*}^{out}\right\|_{L^{\infty}\left(\mathbb{R}^{2}\right)}+\left\|\nabla\psi_{*}^{out}\right\|_{L^{\infty}\left(\mathbb{R}^{2}\right)}\leq C\lambda\left(\frac{\varepsilon^{2}}{t}\sum_{j=1}^{3}\left\|\phi_{*j}\right\|_{L^{2}\left(\mathbb{R}^{2}\right)}+\frac{\varepsilon^{6}}{t^{3}}\right).\label{psi-out-star-equation-4}
\end{align}
Finally, for $\left|y\right|\leq 1$,
\begin{align}
        \psi^{out}\left(\varepsilon y+\xi_{j}\right)&=-\frac{1}{4\pi}\int_{\left|z-\xi_{j}\right|\geq K\left(1+\frac{t}{\tau}\right)^{\frac{1}{2}}}\log{\left(1-\frac{2\varepsilon\left(z-\xi_{j}\right).y}{\left|z-\xi_{j}\right|^{2}}+\frac{\varepsilon^{2}\left|y\right|^{2}}{\left|z-\xi_{j}\right|^{2}}\right)}\mathscr{S}_{*}(z)\ dz\nonumber\\
        &-\frac{1}{2\pi}\int_{\left|z-\xi_{j}\right|\geq K\left(1+\frac{t}{\tau}\right)^{\frac{1}{2}}}\log{\left|z-\xi_{j}\right|}\ \mathscr{S}_{*}(z)\ dz,\label{psi-out-star-equation-5}
\end{align}
where $\mathscr{S}_{*}$ is the source term on the right hand side of \eqref{psi-out-star-equation-2}, has bound given by \eqref{psi-out-star-equation-3}, and has support on three annuli of area $\sim t$. Using these facts and \eqref{psi-out-star-equation-5}, we obtain
\begin{align*}
    \left|\gradperp_{y}\psi^{out}_{*}\left(\varepsilon y+\xi_{j}\right)\cdot\nabla_{y}U(y)\right|&=\left|\gamma\Gamma^{\gamma-1}_{+}(y)\frac{\Gamma'(y)}{\left|y\right|}\dell_{\theta}\psi^{out}_{*}\left(\varepsilon y+\xi_{j}\right)\right|\nonumber\\
    &\leq C\lambda\left(\frac{\varepsilon^{3}}{t^{\frac{3}{2}}}\sum_{j=1}^{3}\left\|\phi_{*j}\right\|_{L^{2}\left(\mathbb{R}^{2}\right)}+\frac{\varepsilon^{7}}{t^{\frac{7}{2}}}\right)
\end{align*}
\subsection{A Priori Estimates on Solutions to the Homotopic Operators}
We now obtain a priori estimates to solutions $\phi_{*j}$ solving \eqref{tilde-phi-star-R-initial-value-problem} given by
\begin{align*}
&\mathscr{E}_{j,\lambda}\left(\phi_{*j},\psi^{out}_{*},\Tilde{\xi}\right)=c_{j1}\left(t\right)\dell_{1}U(y)+c_{j2}\left(t\right)\dell_{2}U(y)\ \ \text{in}\ \mathbb{R}^{2}\times \left[T_{0},T\right],\\
&\phi_{*j}(\blank,T)=0\ \ \text{in}\ \mathbb{R}^{2},\\
&\phi_{*j}\to0,\ \left|y\right|\to\infty,\ \forall t\in\left[T_{0},T\right],
\end{align*}
with $\left(\psi^{out}_{*},\Tilde{\xi}\right)$ given, and sufficiently small, and $\phi_{*j}$ satisfying the orthogonality conditions \eqref{tilde-phi-star-R-0-mass-condition}, which correspond to satisfying assumptions \eqref{linear-transport-operator-phi-mass-conditions} for solutions to our linearized operator.

\medskip
Recall from \eqref{homotopic-operator-inner-error-E-star-R-definition-10} that
\begin{align*}
    \mathfrak{a}_{j}=\sum_{i\neq j}\kappa_{i}\Gamma\left(y+\frac{\xi_{j}(t)-\xi_{i}(t)}{\varepsilon}\right)-\varepsilon \dot{\xi}\cdot y^{\perp}+\kappa_{j}\psi_{j}+\kappa_{j}\psi_{*j}+\psi^{out}+\psi^{out}_{*}
\end{align*}
Using Lemma \ref{remainder-modal-calculation-lemma} and Theorem \ref{first-approximation-construction-theorem}, we can define
\begin{align}
    a_{*}(y,t)=-\sum_{i\neq j}\frac{m_{i}}{4\pi}\mathcal{E}^{\left(ji\right)}_{2}(\xi_{*})+\kappa_{j}\psi_{j},\label{astar-definition}
\end{align}
and note that $a_{*}$ does not depend on any unknowns. Note that $a_{*}$ satisfies \eqref{linear-transport-operator-a-star-bounds} and \eqref{a-star-extra-assumption-statement-1}. Accordingly we can also write
\begin{align*}
    a(y,t)=\mathfrak{a}_{j}-a_{*}.
\end{align*}
Then we can write \eqref{homotopic-operator-inner-error-main-definition} as
\begin{align}
    &\mathscr{E}_{j,\lambda}\left(\phi_{*j},\psi^{out}_{*},\Tilde{\xi}\right)=\varepsilon^{2}\dell_{t}\phi_{*j}\left(y,t\right)+\lambda\tilde{\mathscr{E}}_{j}\left(\psi_{*j},\psi^{out}_{*},\Tilde{\xi}\right)+\gradperp_{y}\psi^{out}_{*}\cdot\nabla_{y}U\label{homotopic-operators-a-priori-estimates-1}\\
    &+\kappa_{j}\gradperp_{y}\left(\Gamma+\lambda\mathfrak{a}_{j}\right)\cdot\nabla_{y}\left(\phi_{*j}-\gamma\left(\Gamma+\lambda a_{*}\right)^{\gamma-1}_{+}\psi_{*j}\right)+\mathfrak{R}_{1}-\varepsilon\nabla_{y}\left(U+\phi_{j}\right)\cdot\left(\dot{\tilde{\xi}}_{j}+\mathcal{N}_{j}\left(\xi_{*}\right)\left[\tilde{\xi}\right]\right)\nonumber.
\end{align}
We can also compare the first two terms of \eqref{homotopic-operators-a-priori-estimates-1} to the first three terms on the right hand side of \eqref{homotopic-operator-inner-error-main-definition} and obtain that
\begin{align}
    &\mathfrak{R}_{1}=\kappa_{j}\gradperp_{y}\left(\Gamma+\lambda\mathfrak{a}_{j}\right)\cdot\nabla_{y}\left(\left(\Gamma+\lambda a_{*}\right)^{\gamma-1}_{+}\psi_{*j}\right)\label{homotopic-operators-a-priori-estimates-3a}\\
    &-\kappa_{j}\gradperp_{y}\left(\Gamma+\lambda\mathfrak{a}_{j}\right)\cdot\nabla_{y}\left(\left(\gamma\Gamma^{\gamma-1}_{+}+\gamma\left(\gamma-1\right)\Gamma^{\gamma-2}_{+}\left(\frac{\lambda\phi_{j1}}{\gamma\Gamma^{\gamma-1}_{+}}\right)\right)\psi_{*j}\right)\nonumber\\
    &+\kappa_{j}\lambda\gradperp_{y}\left(\mathfrak{a}_{j}-\frac{\phi_{j1}}{\gamma\Gamma^{\gamma-1}_{+}}\right)\cdot\nabla_{y}\left(\gamma\Gamma^{\gamma-1}_{+}\psi_{*j}\right)\nonumber\\
    &+\kappa_{j}\lambda\gradperp_{y}\left(\mathfrak{a}_{j}\right)\cdot\nabla_{y}\left(\gamma\left(\gamma-1\right)\Gamma^{\gamma-2}_{+}\left(\frac{\phi_{j1}}{\gamma\Gamma^{\gamma-1}_{+}}\right)\psi_{*j}\right)+\lambda\gradperp_{y}\psi_{*j}\cdot\nabla_{y}\phi_{j2}.\nonumber
\end{align}
Recalling that we can represent $\phi_{j1}$ using \eqref{phi-j1-l-formula}, upon Taylor expanding the term multiplying $\psi_{*j}$ under the gradient on the first term of the right hand side of \eqref{homotopic-operators-a-priori-estimates-3a}, we get some cancellation that gives us $\mathfrak{R}_{1}$ is either quadratic in unknowns or linear in unknowns with coefficients sufficiently small.
If we define the error $\mathfrak{E}_{\lambda}$ as being 
\begin{align}
    \mathfrak{E}_{\lambda}=\lambda\tilde{\mathscr{E}}_{j}\left(\psi_{*j},\psi^{out}_{*},\Tilde{\xi}\right)+\mathfrak{R}_{1}-\varepsilon\nabla_{y}\left(U+\phi_{j}\right)\cdot\left(\dot{\tilde{\xi}}_{j}+\mathcal{N}_{j}\left(\xi_{*}\right)\left[\tilde{\xi}\right]\right)+\gradperp_{y}\psi^{out}_{*}\cdot\nabla_{y}U,\label{homotopic-operators-a-priori-estimates-6}
\end{align}
then $\mathfrak{E}_{\lambda}$ satisfies the assumption on its support \eqref{linear-transport-operator-error-solution-supports}, as well as the bound \eqref{L2-interior-a-priori-estimate-lemma-error-bound-statement} for $\left(\psi_{*j},\psi^{out}_{*},\Tilde{\xi}\right)$ sufficiently small. Then from the information on the support of $\mathfrak{E}_{\lambda}$ defined in \eqref{homotopic-operators-a-priori-estimates-6}, we also obtain that the solution $\phi_{*j}$ satisfies \eqref{linear-transport-operator-error-solution-supports}.

\medskip
Thus, with the assumptions on $\left(\psi_{*j},\psi^{out}_{*},\Tilde{\xi}\right)$ we have made, \eqref{tilde-phi-star-R-initial-value-problem-equation} can be written in the form 
\begin{align}
    &\varepsilon^{2}\dell_{t}\phi_{*j}\left(y,t\right)++\kappa_{j}\gradperp_{y}\left(\Gamma+\lambda\mathfrak{a}_{j}\right)\cdot\nabla_{y}\left(\phi_{*j}-\gamma\left(\Gamma+\lambda a_{*}\right)^{\gamma-1}_{+}\psi_{*j}\right)\nonumber\\
    &+\mathfrak{E}_{\lambda}=c_{j0}\left(t\right)U(y)+c_{j1}\left(t\right)\dell_{1}U(y)+c_{j2}\left(t\right)\dell_{2}U(y)\nonumber
\end{align}
with all the assumptions of Lemma \ref{projected-linear-transport-problem-coefficients-of-projection-a-priori-estimates-lemma}, and thus we have the following estimates
\begin{align*}
    \|\phi_{*j}\|_{L^{2}\left(\mathbb{R}^{2}\right)}\leq \frac{C\varepsilon^{3-\sigma_{3}}}{t^{\frac{3}{2}}},\quad \|\phi_{*j}\|_{L^{\infty}\left(\mathbb{R}^{2}\right)}\leq \frac{C\varepsilon^{1-\sigma_{3}}}{t^{\frac{1}{2}-\sigma_{3}}},
\end{align*}
for all $t\in\left[T_{0},T\right]$ and some small $\sigma_{3}>0$.

\subsection{Fixed Point Formulation and Solution on $\left[T_{0},T\right]$}\label{fixed-point-formulation-section}
We reformulate the system \eqref{psi-star-r-green-function-representation-upper-half-plane}, \eqref{tilde-phi-star-R-initial-value-problem}, \eqref{cRj-initial-value-problem}, \eqref{psi-out-star-boundary-value-problem-equation} as a fixed point problem of the form \eqref{full-solution-construction-fixed-point-formulation} for a well chosen Banach space $\mathfrak{X}$. Recall that $\phi_{*L}$ is an odd reflection of $\phi_{*j}$ in the $x_{1}$ variable, and thus it is enough to solve the above problem.

\medskip
Fix a small number $\beta\in\left(0,1/2\right)$. For $\phi_{*j}\left(y,t\right)$ defined on $\mathbb{R}^{2}\times\left[T_{0},T\right]$, we consider the set of functions with support on $B_{2}\left(0\right)$ with finite $\left\|\cdot\right\|_{i}$ norm, where 
\begin{align*}
    \left\|\phi\right\|_{i}\coloneqq \sup_{\left[T_{0},T\right]}\left(t^{\frac{3}{2}}\left\|\phi\right\|_{L^{2}\left(B_{2}\left(0\right)\right)}\right)+\sup_{\left[T_{0},T\right]}\left(\varepsilon^{2}t^{\frac{1}{2}-\beta}\left\|\phi\right\|_{L^{\infty}\left(B_{2}\left(0\right)\right)}\right).
\end{align*}
Next we define the $\left\|\cdot\right\|_{o}$ norm for $\psi_{*}^{out}\left(x,t\right)$ defined on $\mathbb{R}^{2}\times\left[T_{0},T\right]$:
\begin{align*}
    \left\|\psi\right\|_{o}\coloneqq \sup_{\left[T_{0},T\right]}\left(t^{\frac{5}{2}}\|\left(\log{\left(\left|\cdot\right|+2\right)}\right)^{-1}\psi\|_{L^{\infty}\left(\mathbb{R}^{2}\right)}\right)+\sup_{\left[T_{0},T\right]}\left(t^{\frac{5}{2}}\left\|\nabla\psi\right\|_{L^{\infty}\left(\mathbb{R}^{2}\right)}\right).
\end{align*}
For $\psi_{*j}$ related to $\phi_{*j}$ via \eqref{psi-star-r-green-function-representation-upper-half-plane}, we define $\mathfrak{X}$ as the Banach space of all points $\mathfrak{P}=\left(\phi_{*j},\psi_{*}^{out},\tilde{\xi}\right)$ such that $\nabla_{y}\psi_{*j}$, $\nabla_{y}\psi_{*}^{out}$, $\dot{\tilde{\xi}}$ all exist and are continuous, with $\tilde{\xi}=\left(\tilde{\xi}_{1},\tilde{\xi}_{2},\tilde{\xi}_{3}\right)$, satisfying
\begin{align*}
    \left\|\mathfrak{P}\right\|_{\mathfrak{X}}\coloneqq\sum_{j=1}^{3}\|\phi_{*j}\|_{i}+\left\|\psi_{*}^{out}\right\|_{o}+\sum_{j=1}^{3}\left(\|t^{\frac{3}{2}}\tilde{\xi}_{j}\|_{\left[T_{0},T\right]}+\|t^{\frac{5}{2}}\dot{\tilde{\xi}}_{j}\|_{\left[T_{0},T\right]}\right)<\infty,
\end{align*}
with $\phi_{*j}$ also satisfying $\supp{\phi_{*j}}\subset B_{2}\left(0\right)$ and the orthogonality conditions \eqref{tilde-phi-star-R-0-mass-condition}--\eqref{tilde-phi-star-R-0-mass-condition}.

\medskip
Then in $\mathfrak{X}$, we define the open set $\mathscr{O}$ as the ``deformed ball" around $0$ in $\mathscr{X}$, that is all $\mathfrak{P}$ satisfying, for $j=1,2,3$,
\begin{align}
    \|\phi_{*j}\|_{i}<\varepsilon^{3-3\beta}, \quad \left\|\psi_{*}^{out}\right\|_{o}<\varepsilon^{5-3\beta},\quad \|t^{\frac{3}{2}}\tilde{\xi}_{j}\|_{\left[T_{0},T\right]}+\|t^{\frac{5}{2}}\dot{\tilde{\xi}}_{j}\|_{\left[T_{0},T\right]}<\varepsilon^{4-3\beta}.\label{deformed-open-ball-definition}
\end{align}
\begin{remark}
    We note that by the representation formula, $\phi_{*j}$ solving \eqref{tilde-phi-star-R-initial-value-problem} has support depending only on $U\left(y\right)$ and $\left(\Gamma+a_{*}\right)^{\gamma}_{+}$ for $a_{*}$ defined in \eqref{astar-definition}. It is then simple using \eqref{deformed-open-ball-definition} and the characteristics defined in \eqref{characteristics-ode-system} to show its support is uniformly contained in $B_{2}\left(0\right)$ for any $\lambda\in\left[0,1\right]$.
\end{remark}
Then the problem of finding a solution on $\left[T_{0},T\right]$ is reduced to restating the system \eqref{psi-star-r-green-function-representation-upper-half-plane}, \eqref{tilde-phi-star-R-initial-value-problem}, \eqref{cRj-initial-value-problem}, \eqref{psi-out-star-boundary-value-problem-equation} as a fixed point problem by finding an operator $\mathcal{T}$ such that solving the system is equivalent to solving $\mathcal{T}\left(\mathfrak{P},\lambda\right)=\mathfrak{P}$ for $\mathfrak{P}\in\mathscr{O}$, and for all $\lambda\in\left[0,1\right]$. This can be done by inverting the transport operator for \eqref{tilde-phi-star-R-initial-value-problem}, the Laplacian for \eqref{psi-out-star-boundary-value-problem-equation}, and by using Lemma \ref{projected-linear-transport-problem-coefficients-of-projection-a-priori-estimates-lemma} to see that \eqref{cRj-initial-value-problem} is satisfied if an ODE system for $\tilde{\xi}_{j}$, obtained by testing \eqref{tilde-phi-star-R-initial-value-problem} by $y_{1}$, and $y_{2}$ in turn, is satisfied. The first order differential operator acting on $\left(\tilde{\xi}_{1},\tilde{\xi}_{2},\tilde{\xi}_{3}\right)$ can be inverted using the analysis on the linearized operator around $\left(\xi_{*1},\xi_{*2},\xi_{*3}\right)$ in Section \ref{point-vortex-trajectory-section}. This is very similar to the set up constructed in \cite{DDMW2020}, and we refer to this work for more details.

\medskip
Then calling the inverted transport operator, inverted Laplacian, and inverted first order differential operator $\mathcal{T}^{in}_{\lambda}, \mathcal{T}^{out}_{\lambda}$, and $\mathcal{T}^{ode}_{\lambda}$ respectively, if we define the new operators, for $\phi_{*}=\left(\phi_{*1},\phi_{*2}.\phi_{*3}\right)$, and $\tilde{\xi}=\left(\tilde{\xi}_{1},\tilde{\xi}_{2},\tilde{\xi}_{3}\right)$,
\begin{align*}
    \tilde{\mathcal{T}}^{out}_{\lambda}\left(\phi_{*},\psi_{*}^{out},\tilde{\xi}\right)&=\mathcal{T}^{out}_{\lambda}\left(\mathcal{T}^{in}_{\lambda}\left(\phi_{*},\psi_{*}^{out},\tilde{\xi}\right),\psi_{*}^{out},\tilde{\xi}\right)\\
    \tilde{\mathcal{T}}^{ode}_{\lambda}\left(\phi_{*},\psi_{*}^{out},\tilde{\xi}\right)&=\mathcal{T}^{ode}_{\lambda}\left(\mathcal{T}^{in}_{\lambda}\left(\phi_{*},\psi_{*}^{out},\tilde{\xi}\right),\psi_{*}^{out},\tilde{\xi}\right),
\end{align*}
we see that the system consisting of \eqref{psi-star-r-green-function-representation-upper-half-plane}, \eqref{tilde-phi-star-R-initial-value-problem}, \eqref{cRj-initial-value-problem}, \eqref{psi-out-star-boundary-value-problem-equation} on the deformed open ball $\mathscr{O}$ defined in \eqref{deformed-open-ball-definition} is equivalent to the fixed point problem
\begin{align}
    \mathfrak{P}=\tilde{\mathcal{T}}_{\lambda}\left(\mathfrak{P}\right),\quad \mathfrak{P}\in\mathscr{O},\label{fixed-point-problem-final}
\end{align}
where
\begin{align}
    \tilde{\mathcal{T}}_{\lambda}\left(\mathfrak{P}\right)=\left(\mathcal{T}^{in}_{\lambda}\left(\mathfrak{P}\right),\tilde{\mathcal{T}}^{out}_{\lambda}\left(\mathfrak{P}\right),\tilde{\mathcal{T}}^{ode}_{\lambda}\left(\mathfrak{P}\right)\right).\label{fixed-point-operator}
\end{align}
\begin{lemma}
    The operator $\tilde{\mathcal{T}}\colon\mathscr{O}\times\left[0,1\right]\rightarrow\mathfrak{X}$ defined by $\tilde{\mathcal{T}}\left(\cdot,\lambda\right)\coloneqq\tilde{\mathcal{T}}_{\lambda}$ in \eqref{fixed-point-operator} is compact, and for all $\lambda\in\left[0,1\right]$, is a degree $1$ operator.
\end{lemma}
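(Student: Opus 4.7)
The plan is to handle the two assertions separately: for compactness I would show that each of the three inverse operators $\mathcal{T}^{in}_\lambda$, $\mathcal{T}^{out}_\lambda$, $\mathcal{T}^{ode}_\lambda$ produces output that lies in a space compactly embedded in $\mathfrak{X}$; for the degree I would perform a homotopy $\lambda\rightsquigarrow 0$ followed by a further homotopy $\tilde{\mathcal{T}}_0\rightsquigarrow 0$ to conclude $\deg(I-\tilde{\mathcal{T}}_\lambda,\mathscr{O},0)=1$ for every $\lambda\in[0,1]$. Joint continuity in $(\mathfrak{P},\lambda)$ is straightforward because the source terms entering each inverse problem depend continuously (and piecewise linearly) on $\lambda$.

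For compactness of $\tilde{\mathcal{T}}^{out}_\lambda$ I would invoke Lemma~\ref{poisson-equation-zero-mass-estimates-lemma} and the Poisson estimates above, which yield $C^{1,\beta}_{\mathrm{loc}}$ control on $\psi_*^{out}$ uniformly in $\mathscr{O}$; combined with the uniform logarithmic decay at infinity this embeds compactly into $\|\cdot\|_o$. For $\tilde{\mathcal{T}}^{ode}_\lambda$ the image is uniformly Lipschitz in time via the bounds \eqref{bounds-on-B-entries-4} on $e^{\pm\mathcal{B}}$, so Arzel\`a--Ascoli applies directly in the norm $\|t^{3/2}\cdot\|+\|t^{5/2}\dot{(\cdot)}\|$. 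For the inner piece $\mathcal{T}^{in}_\lambda$, Corollaries~\ref{L2-a-priori-estimate-corollary}--\ref{L-infinity-a-priori-estimate-corollary} supply $L^2$ and $L^\infty$ bounds with time-decay exponents strictly better than those required in $\mathscr{O}$, the uniform compact support \eqref{linear-transport-operator-error-solution-supports} eliminates issues at infinity, and spatial compactness of $\phi_{*j}$ is obtained by transporting the H\"older regularity of $\nabla\psi_{*j}$ from Lemma~\ref{phi-interpolation-estimate-lemma} along the characteristics \eqref{characteristics-ode-system}. Time-equicontinuity follows by reading $\partial_t\phi_{*j}$ off of \eqref{linear-transport-equation} and estimating the right-hand side.

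For the degree, the a priori estimates established in the previous subsections — Corollaries~\ref{L2-a-priori-estimate-corollary}--\ref{L-infinity-a-priori-estimate-corollary}, Lemma~\ref{projected-linear-transport-problem-coefficients-of-projection-a-priori-estimates-lemma}, and the Poisson bounds on $\psi_*^{out}$ — hold for the full $\lambda$-family and guarantee that any fixed point $\mathfrak{P}=\tilde{\mathcal{T}}_\lambda(\mathfrak{P})$ automatically satisfies
\begin{align*}
\|\phi_{*j}\|_i = O(\varepsilon^{3-\sigma_3}|\log\varepsilon|^{3/2}),\quad \|\psi_*^{out}\|_o = O(\varepsilon^{5-\sigma_3}|\log\varepsilon|^{3/2}),\\
\|t^{3/2}\tilde{\xi}_j\|_{[T_0,T]}+\|t^{5/2}\dot{\tilde{\xi}}_j\|_{[T_0,T]}=O(\varepsilon^{4-\sigma_4}),
\end{align*}
all strictly smaller than the thresholds defining $\mathscr{O}$ in \eqref{deformed-open-ball-definition} once $\varepsilon$ is small enough and $\sigma_3,\sigma_4<3\beta$. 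Hence no fixed point lies on $\partial\mathscr{O}$ for any $\lambda\in[0,1]$ and the Leray--Schauder degree is homotopy-invariant in $\lambda$. At $\lambda=0$ the system decouples: $\mathscr{E}^{out}_0=0$ forces $\psi_*^{out}\equiv 0$ on $\mathbb{R}^2$; substituting, the inner transport equation with terminal data $\phi_{*j}(T)=0$ and the linear ODE for $\tilde{\xi}$ with $\tilde{\xi}(T)=0$ form a homogeneous linear system whose only solution in $\mathscr{O}$ is $0$, by exactly the energy argument of Lemmas~\ref{L2-exterior-a-priori-estimate-lemma}--\ref{L2-interior-a-priori-estimate-lemma} applied to the difference of two candidate fixed points. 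Since $\tilde{\mathcal{T}}_0$ is compact and linear, a further homotopy $s\mapsto s\,\tilde{\mathcal{T}}_0$, $s\in[0,1]$, to which the same a priori estimates apply without change, gives $\deg(I-\tilde{\mathcal{T}}_0,\mathscr{O},0)=\deg(I,\mathscr{O},0)=1$.

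The main obstacle is the spatial compactness of $\mathcal{T}^{in}_\lambda$: because $\nabla^\perp(\Gamma+a_*+a)$ is only log-Lipschitz in $y$ by \eqref{linear-transport-operator-a-log-lipschitz-bound}, the characteristic map \eqref{characteristics-ode-system} is H\"older of every exponent $<1$ but not Lipschitz, so transporting the H\"older regularity of $\nabla\psi_{*j}$ through the flow incurs a loss of exponent that must be tracked. The free parameter $\beta\in(0,1/2)$ fixed at the start of Section~\ref{fixed-point-formulation-section} is chosen precisely to absorb this loss in the time-weight of the $L^\infty$-component of $\|\cdot\|_i$, and the compact support condition in \eqref{linear-transport-operator-error-solution-supports} is crucial so that one only needs the regularity transfer on a bounded region.
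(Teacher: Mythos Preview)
Your approach is correct and is precisely the argument the paper defers to \cite{DDMW2020}; you have in fact supplied considerably more detail than the paper's own proof, which simply cites that reference. One small correction: at $\lambda=0$ the operator $\tilde{\mathcal{T}}_0$ is not quite linear, because the term $\mathcal{N}_j(\xi_*)[\tilde{\xi}]$ from \eqref{point-vortex-nonlinear-functionals} appearing in $\mathfrak{E}_0$ is the full nonlinear difference, so after inverting the linearised ODE operator a quadratic-in-$\tilde{\xi}$ contribution remains on the right-hand side. This does not affect your argument---the quadratic remainder is $O(\varepsilon^{8-6\beta}t^{-9/2})$ on $\mathscr{O}$, so the further homotopy $s\mapsto s\,\tilde{\mathcal{T}}_0$ still has no fixed points on $\partial\mathscr{O}$ by the same a priori estimates---but the justification should be the smallness of the nonlinear piece rather than exact linearity.
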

\begin{proof}
    Since we are trying to construct a solution to \eqref{fixed-point-problem-final} on a finite time interval $\left[T_{0},T\right]$, the proof is once again analogous to the proof given in \cite{DDMW2020}. So we have compactness of the operator as well as the desired information on the degree for all $\lambda\in\left[0,1\right]$. 
\end{proof}
The degree of the operator at $\lambda=1$ exactly means we have a unique solution in $\mathscr{O}$ to \eqref{psi-star-r-green-function-representation-upper-half-plane}, \eqref{tilde-phi-star-R-initial-value-problem}, \eqref{cRj-initial-value-problem}, \eqref{psi-out-star-boundary-value-problem-equation} at $\lambda=1$, and therefore to \eqref{2d-euler-vorticity-stream} with the desired estimates in $\varepsilon$ and $t$ on $\left[T_{0},T\right]$, for all $T$ large enough, as desired.

\section{Proof of Theorem \ref{teo1}}\label{conclusion}
We have a sequence of solutions to \eqref{2d-euler-vorticity-stream} that is identified with the sequence of solutions with estimates coming from \eqref{deformed-open-ball-definition} on $\left[T_{0},T\right]$, $\mathfrak{P}^{T}=\left(\phi_{*}^{T},\psi_{*}^{out,T},\tilde{\xi}^{T}\right)$ to \eqref{psi-star-r-green-function-representation-upper-half-plane}, \eqref{tilde-phi-star-R-initial-value-problem}, \eqref{cRj-initial-value-problem}, \eqref{psi-out-star-boundary-value-problem-equation} at $\lambda=1$ that we constructed in Section \ref{fixed-point-formulation-section}. These solutions satisfy $\mathfrak{P}^{T}\left(T\right)=0$.

\medskip
To obtain the solution on $\left[T_{0},\infty\right)$ with the properties claimed in Theorem \ref{teo1}, we want to apply Arzelà–Ascoli. This is standard with the estimates in $t$ that we have on $\left[T_{0},T\right]$ for any $T$ large enough. See \cite{DDMP2VP2023} for a very similar argument with more details. In the end, we get a subsequence $\left(\phi_{*}^{T_{n}},\psi_{*}^{out,T_{n}},\tilde{\xi}^{T_{n}}\right)$ that tends to some limit $\left(\phi_{*},\psi_{*}^{out},\tilde{\xi}\right)$ locally uniformly in both $t$ and $x$ for each fixed $\varepsilon>0$ small enough and $T_{0}>0$ large enough that satisfies bounds
\begin{align*}
    &\|\phi_{*j}\|_{L^{\infty}\left(\mathbb{R}^{2}\right)}\leq \frac{C\varepsilon^{1-\sigma}}{t^{\frac{1}{2}-\sigma}},\quad \|\phi_{*j}\|_{L^{2}\left(\mathbb{R}^{2}\right)}\leq \frac{C\varepsilon^{3-\sigma}}{t^{\frac{3}{2}}},\quad \|t^{\frac{3}{2}}\tilde{\xi}\|_{\left[T_{0},\infty\right)}\leq \varepsilon^{4-\sigma},\quad \|t^{\frac{5}{2}}\dot{\tilde{\xi}}\|_{\left[T_{0},\infty\right)}\leq \varepsilon^{4-\sigma},
\end{align*}
some small $\sigma>0$. The bounds for $\psi_{*j}$ and $\psi_{*}^{out}$ can be inferred from the bounds above. This limit then gives us a solution to $\eqref{2d-euler-vorticity-stream}$ with the properties stated in Theorem \ref{teo1}, as desired.

\appendix
    \section{Linearisation Around the Power Nonlinearity Vortex}\label{appendix-section}
    Here we record some useful results about the linearised operator around $\Gamma$ defined in \eqref{power-semilinear-problem-R2}. 

    \begin{lemma}\label{dancer-yan-non-degeneracy-lemma}
        Suppose $\varphi\in L^{\infty}$ is a solution to
        \begin{align}
            \Delta\varphi+\gamma \Gamma^{\gamma-1}_{+}\varphi=\kappa \gamma \Gamma^{\gamma-1}_{+}\varphi\nonumber
        \end{align}
        on $\mathbb{R}^{2}$, with $\kappa\in[0,1)$ a constant. If $\kappa=0$, then $\varphi$ lives in the span of $\dell_{1}\Gamma$ and $\dell_{2}\Gamma$. If $\kappa>0$, then $\varphi\equiv0$.
    \end{lemma}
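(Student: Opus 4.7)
The plan is to reduce the problem to a family of radial Sturm--Liouville problems by Fourier decomposition in the angular variable, identify the mode $n = \pm 1$ contribution with the explicit kernel spanned by $\partial_1 \Gamma, \partial_2 \Gamma$, and rule out all other modes by a mixture of spectral comparison and an explicit dilation-derived solution at mode $0$.

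First, write $\varphi(r, \theta) = \sum_{n \in \mathbb{Z}} \varphi_n(r) e^{i n \theta}$, which reduces the equation to the radial ODEs
\[
\varphi_n'' + \frac{\varphi_n'}{r} - \frac{n^2}{r^2}\varphi_n + (1 - \kappa)\gamma \Gamma_+^{\gamma-1}(r)\varphi_n = 0
\]
on $(0, \infty)$. Since $\Gamma_+^{\gamma-1}$ vanishes on $\{r > 1\}$, each $\varphi_n$ is harmonic there, so the $L^{\infty}$ bound forces $\varphi_n(r) = b_n r^{-|n|}$ for $n \neq 0$ and $\varphi_0 \equiv a_0$. Matching in $C^1$ at $r = 1$ yields the Robin condition $\varphi_n'(1) + |n|\varphi_n(1) = 0$ for $n \neq 0$ and the Neumann condition $\varphi_0'(1) = 0$ for $n = 0$, while regularity at the origin singles out a one-dimensional space of interior solutions with $\varphi_n(r) \sim c_n r^{|n|}$. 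The lemma thus reduces to checking, mode by mode, whether that one-dimensional interior solution matches the boundary condition at $r = 1$.

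For $n = \pm 1$ with $\kappa = 0$, differentiating the profile equation $\nu'' + \nu'/r + \nu^\gamma = 0$ shows that $\nu'(r)$ solves the mode-$1$ ODE and is regular at the origin; using $\nu(1) = 0$ in the equation at $r = 1$ gives $\nu''(1) = -\nu'(1)$, which is precisely the Robin condition. This produces the claimed two-dimensional real kernel $\mathrm{span}(\partial_1\Gamma, \partial_2\Gamma)$. For the remaining combinations of $(n, \kappa)$ with $\kappa \in [0, 1)$, treat $\mu := 1 - \kappa$ as an eigenvalue parameter. Since $\nu'(r) < 0$ on $(0, 1]$ is of constant sign, it is the principal eigenfunction at mode $1$, so $\mu_1^{(1)} = 1$. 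Strict monotonicity in $|n|$ of the centrifugal term $n^2/r^2$ and of the Robin coefficient $|n|$ give $\mu_1^{(n)} > 1$ for $|n| \geq 2$, ruling out $\mu \in (0,1]$; and for $n = 1$ with $\kappa > 0$ one has $\mu = 1 - \kappa < \mu_1^{(1)} = 1$, again below the principal eigenvalue.

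The delicate step, and the main obstacle of the proof, is mode $n = 0$, since the Neumann problem has principal eigenvalue $\mu = 0$ with constant eigenfunction (the excluded value $\kappa = 1$), so the ``below the principal eigenvalue'' argument fails. My plan is to exploit the dilation invariance of the nonlinear profile problem: differentiating $\lambda^{2/(\gamma-1)} \nu(\lambda r)$ at $\lambda = 1$ yields
\[
\chi(r) = \tfrac{2}{\gamma-1}\nu(r) + r\nu'(r),
\]
which a direct computation verifies satisfies the mode-$0$ linearised equation at $\kappa = 0$ and is bounded at the origin with $\chi(0) = \tfrac{2}{\gamma-1}\nu(0) > 0$. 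Using $\nu(1) = 0$ and $\nu''(1) = -\nu'(1)$, one obtains $\chi(1) = \nu'(1) < 0$ and $\chi'(1) = \tfrac{2}{\gamma-1}\nu'(1) \neq 0$, so $\chi$ fails the Neumann condition and the case $\kappa = 0$ at mode $0$ is finished. For $\kappa \in (0, 1)$ I would use the Pohozaev-type identity obtained by testing the mode-$0$ equation against $r\chi$,
\[
\chi'(1)\,\varphi_0(1) = (\mu - 1) \int_0^1 r\,\gamma \nu^{\gamma-1}\, \chi\, \varphi_0 \, dr,
\]
combined with the observation that $\chi'(r) = \tfrac{2}{\gamma-1}\nu'(r) - r\nu^\gamma < 0$ on $(0, 1]$, so that $\chi$ is strictly decreasing and has exactly one zero in $(0,1)$. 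A Sturm oscillation/shooting argument for $g(\mu) := \tilde\chi_\mu'(1)$ (with $\tilde\chi_\mu(0) = 1$), computing the first-order perturbation $g'(0) = -\int_0^1 r\gamma\nu^{\gamma-1}\,dr < 0$ and using $g(1) \neq 0$ together with monotonicity of the zero-count of $\tilde\chi_\mu$, excludes further zeros of $g$ on $(0, 1]$ and closes the argument.
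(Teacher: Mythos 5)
The paper does not actually prove this lemma: it defers entirely to \cite{danceryan} (``the proof of this result (with minor modifications) can be found in...''), so your argument is necessarily an independent, self-contained route. Its structure --- Fourier decomposition in $\theta$, harmonic matching on $\{r>1\}$ converting boundedness into a Robin condition $\varphi_n'(1)+|n|\varphi_n(1)=0$ (Neumann for $n=0$), identification of the mode-$\pm1$ kernel with $\nu'$, Rayleigh-quotient monotonicity in $|n|$ to exclude $|n|\ge 2$, and the dilation field $\chi=\tfrac{2}{\gamma-1}\nu+r\nu'$ at mode $0$ --- is the standard proof of such nondegeneracy statements and is essentially correct. The computations check out: $\nu''(1)=-\nu'(1)$ gives the Robin condition for $\nu'$; $\chi'=\tfrac{2}{\gamma-1}\nu'-r\nu^{\gamma}<0$ on $(0,1]$ so $\chi'(1)=\tfrac{2}{\gamma-1}\nu'(1)\neq0$ disposes of mode $0$ at $\kappa=0$; and $g'(0)=-\int_0^1 r\gamma\nu^{\gamma-1}\,dr$ follows from $(rv')'=-r\gamma\nu^{\gamma-1}$ at $\mu=0$.

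Two points are thinner than the argument requires. First, the Wronskian identity $\chi'(1)\varphi_0(1)=(\mu-1)\int_0^1 r\gamma\nu^{\gamma-1}\chi\varphi_0\,dr$ cannot by itself exclude $\mu=1-\kappa\in(0,1)$, precisely because $\chi$ changes sign on $(0,1)$: both sides can carry consistent signs. The entire burden thus falls on the oscillation step, which you state only telegraphically. It does close: in Pr\"ufer variables $\vartheta(1,\mu)$ is strictly increasing in $\mu$, equals $\pi/2$ at $\mu=0$ (constant solution), and at $\mu=1$ the facts $\tilde\chi_1(1)<0$, $\tilde\chi_1'(1)<0$ and the single interior zero place $\vartheta(1,1)\in(\pi,3\pi/2)$; hence $\vartheta(1,\mu)\in(\pi/2,3\pi/2)$ for all $\mu\in(0,1]$ and the Neumann condition $\cos\vartheta(1,\mu)=0$ never occurs. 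This (or the equivalent nodal-count characterization of the second Neumann eigenvalue) should be written out, as it is the only genuinely delicate point. Second, and minor: the strict inequality $\mu_1^{(n)}>1$ for $|n|\ge2$ needs the Rayleigh infimum to be attained (true by compactness of $H^1(B_1)\hookrightarrow L^2$ against the bounded weight $\gamma\nu^{\gamma-1}$), and the ``below the principal eigenvalue'' conclusions implicitly use positivity of the Robin quadratic form so that all eigenvalues lie above $\mu_1^{(n)}$; each deserves a sentence.
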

    The proof of this result (with minor modifications) can be found in \cite{danceryan}.

    \medskip
    Next, we look at the linear problem given by
    \begin{align}
        \Delta_{y}\psi+\gamma \Gamma^{\gamma-1}_{+}\psi=\gamma \Gamma^{\gamma-1}_{+}g,\label{vortex-linearised-equation}
    \end{align}
    where $g$ is a $C^{1}$ function. We assume that $g$ has no mode $0$ terms, and move to complex coordinates $y=re^{i\theta}$, so that
    \begin{align}
        \psi\left(r,\theta\right)&=\sum_{k\in\mathbb{Z}}\uppsi_{k}\left(r\right)e^{ik\theta},\label{vortex-linearised-equation-solution-fourier-expansion}\\
        g\left(r,\theta\right)&=\sum_{k\in\mathbb{Z}}\mathfrak{g}_{k}\left(r\right)e^{ik\theta}.\label{vortex-linearised-equation-error-fourier-expansion}
    \end{align}
    No mode $0$ term for $g$ means that $\mathfrak{g}_{0}\equiv0$, and we accordingly impose that $\uppsi_{0}\equiv0$. Then \eqref{vortex-linearised-equation} splits into infinitely many ODEs given by
    \begin{align}
        \mathscr{L}_{k}\left[\uppsi_{k}\right]\coloneqq\dell_{r}^{2}\uppsi_{k}+\frac{1}{r}\dell_{r}\uppsi_{k}-\frac{k^{2}}{r^{2}}\uppsi_{k}+\gamma\Gamma^{\gamma-1}_{+}\uppsi_{k}=\gamma\Gamma^{\gamma-1}_{+}\mathfrak{g}_{k}.\label{modal-odes}
    \end{align}
    For $k\neq0$, one can find a positive solution $\zeta_{k}$ to the equation $\mathscr{L}_{k}\left[\zeta_{k}\right]=0$, that behaves like
    \begin{align}
        \zeta_{k}=r^{\left|k\right|}\left(1+\bigO{\left(r^{2}\right)}\right),\ r\to0,\label{zeta-k-origin-behaviour}
    \end{align}
    For $k=\pm1$, we have that $\zeta_{k}=-\Gamma'\left(r\right)$, which by \eqref{Gamma-piecewise-definition}, decays like $r^{-1}$ as $r\to\infty$. For $\left|k\right|\geq2$, we instead have
    \begin{align}
        \zeta_{k}=r^{\left|k\right|}\left(1+\bigO{\left(r^{-2\left|k\right|}\right)}\right),\ r\to\infty,\label{zeta-k-infinity-behaviour}
    \end{align}
    Then, in the case that $\mathfrak{g}_{k}\sim r^{\left|k\right|}$ as $r\to0$, we have the following lemma on how $\uppsi_{k}$ behaves at the origin and as $r\to\infty$.
    \begin{lemma}\label{vortex-linearised-equation-fourier-coefficients-behaviour-lemma}
        Assume we have $\psi$ and $g$ as in \eqref{vortex-linearised-equation}--\eqref{vortex-linearised-equation-error-fourier-expansion}, with no mode $0$ terms. Also assume that $\mathfrak{g}_{k}\sim r^{\left|k\right|}$ as $r\to0$ for $\left|k\right|\geq 1$. Then, for $\left|k\right|\geq 2$, we have 
        \begin{align}
            &\uppsi_{k}\in L^{\infty}\left(\mathbb{R}^{2}\right),\quad \uppsi_{k}=r^{\left|k\right|}\left(1+o\left(1\right)\right),\ r\to0,\nonumber\\
            &\uppsi_{k}=\twopartdef{\bigO{\left(r^{2}\left|k\right|^{-2}\right),}}{r\to0,\ \left|k\right|\to\infty,}{\bigO{\left(r^{-\left|k\right|}\left|k\right|^{-2}\right)},}{r\to\infty,\ \left|k\right|\to\infty.}\label{vortex-linearised-equation-fourier-coefficients-behaviour-lemma-statement-1}
        \end{align}
        For $\left|k\right|=1$, we have
        \begin{align}
            \frac{1}{r+1}\uppsi_{k}\in L^{\infty}\left(\mathbb{R}^{2}\right),\quad \uppsi_{k}=\twopartdef{\bigO{\left(r^{3}\right)},}{r\to0,}{\bigO{\left(r\right)},}{r\to\infty.}\label{vortex-linearised-equation-fourier-coefficients-behaviour-lemma-statement-2}
        \end{align}
    \end{lemma}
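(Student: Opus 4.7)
\textbf{Proof plan for Lemma \ref{vortex-linearised-equation-fourier-coefficients-behaviour-lemma}.} The strategy is to solve each ODE $\mathscr{L}_k[\uppsi_k] = \gamma\Gamma^{\gamma-1}_+ \mathfrak{g}_k =: h_k$ explicitly via variation of parameters using $\zeta_k$ and a second, linearly independent homogeneous solution, then read off the required pointwise and uniform-in-$k$ estimates from the resulting representation.

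First I would construct the second homogeneous solution $\eta_k$ by reduction of order. Since the ODE in self-adjoint form is $(r\uppsi_k')' + r\gamma\Gamma^{\gamma-1}_+ \uppsi_k - (k^2/r)\uppsi_k = rh_k$, writing $\eta_k = \zeta_k v_k$ and using $\mathscr{L}_k[\zeta_k]=0$ yields $v_k'(r) = 1/(r\zeta_k(r)^2)$, giving
\begin{equation*}
\eta_k(r) = \zeta_k(r) \int_{1}^{r} \frac{ds}{s\,\zeta_k(s)^2},
\end{equation*}
with Wronskian $W(\zeta_k,\eta_k)(r) = 1/r$. Using \eqref{zeta-k-origin-behaviour} and \eqref{zeta-k-infinity-behaviour}, I would verify that $\eta_k(r) \sim \tfrac{1}{2|k|}r^{-|k|}$ as $r\to 0$ for all $|k|\geq 1$, that $\eta_k(r) \sim \tfrac{1}{2|k|}r^{-|k|}$ as $r\to\infty$ for $|k|\geq 2$, and that $\eta_1(r) \sim c\,r$ as $r\to\infty$ (since $\zeta_1 \sim c'/r$ there, so the integrand grows linearly and the integral grows quadratically). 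The $1/|k|$ factors will be decisive for the uniform bounds.

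Next, I would write the particular solution via the standard formula
\begin{equation*}
\uppsi_k(r) = -\zeta_k(r)\int_0^{r} s\,h_k(s)\,\eta_k(s)\,ds \;+\; \eta_k(r)\int_0^{r} s\,h_k(s)\,\zeta_k(s)\,ds,
\end{equation*}
which is well defined at the origin because $\mathfrak{g}_k = O(r^{|k|})$ implies $s h_k \eta_k = O(s)$ and $s h_k \zeta_k = O(s^{2|k|+1})$ as $s\to 0$. Crucially, since $\Gamma^{\gamma-1}_+$ vanishes for $r>1$, the two integrals become constants $A_k,B_k$ once $r\geq 1$, so for $r\geq 1$ one has $\uppsi_k(r) = -A_k \zeta_k(r) + B_k\eta_k(r)$. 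For $|k|\geq 2$, $L^\infty$-ness at infinity forces us to subtract the homogeneous $B_k \eta_k$ contribution by adding a suitable multiple of $\zeta_k$ (which does not alter the origin behavior), producing a solution that extends the $-A_k\zeta_k$ term smoothly. Plugging in $\zeta_k, \eta_k \sim r^{\pm|k|}/|k|$ and bounding the support integrals, I obtain $|A_k| \leq C/|k|^2$ (the two powers of $|k|$ coming from the $1/|k|$ in $\eta_k$ and the $1/(2|k|+2)$ from integrating $s^{2|k|+1}$), which yields $|\uppsi_k(r)| \leq C r^{-|k|}/|k|^2$ at infinity and $|\uppsi_k(r)| \leq C r^{|k|+2}/|k|^2 \leq C r^2/|k|^2$ near the origin. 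The leading term $\uppsi_k = r^{|k|}(1+o(1))$ near zero follows once the constant multiple of $\zeta_k$ (used to kill the $B_k\eta_k$ piece outside the support) is extracted, since $\zeta_k = r^{|k|}(1 + O(r^2))$.

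For $|k|=1$ the argument is similar but there is no growth of $\zeta_1$ at infinity to contend with; instead, both $\zeta_1$ and $\eta_1$ are admissible contributions in the class $(r+1)^{-1}\uppsi_1 \in L^\infty$. I would select the particular solution given by the same variation of parameters formula, producing $\uppsi_1 = O(r^3)$ near zero (from $\zeta_1 \cdot r^2 + \eta_1 \cdot r^4$, both $\sim r^3$) and $\uppsi_1 = O(r)$ at infinity (from $\eta_1 \sim r$ multiplied by the bounded coefficient $B_1$). The hypothesis $\mathfrak{g}_1 \sim r$ near $0$ gives $s h_1 \eta_1 = O(1)$, whose integral produces the $r^2$ factor needed for the $r^3$ vanishing.

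\textbf{Main obstacle.} I expect the hardest point to be the uniform-in-$|k|$ tracking, not the qualitative behavior. The bounds on $\zeta_k$ and $\eta_k$ implicit in \eqref{zeta-k-origin-behaviour}--\eqref{zeta-k-infinity-behaviour} must be sharpened to include $|k|$-explicit constants, and the normalizing factor between $\zeta_k$ and the pure Euler ansatz $r^{|k|}$ must be controlled uniformly in $|k|$. The natural approach is to treat $\zeta_k/r^{|k|}$ as a perturbation of the constant solution of $y'' + (2|k|+1)y'/r + \gamma\Gamma^{\gamma-1}_+ y = 0$ (obtained by the substitution $\zeta_k = r^{|k|}y$) and to apply a Gronwall argument on $[0,1]$ exploiting the smoothness and compact support of $\gamma\Gamma^{\gamma-1}_+$. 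This gives $\zeta_k = r^{|k|}(1 + O(|k|^{-1}))$ uniformly on $[0,1]$ and analogous control on $[1,\infty)$, from which the $|k|^{-2}$ factors in \eqref{vortex-linearised-equation-fourier-coefficients-behaviour-lemma-statement-1} emerge by the integration-by-parts sketched above.
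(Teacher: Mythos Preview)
Your approach via variation of parameters is essentially the same as the paper's, and the plan for extracting the $|k|^{-2}$ factors from the integrations is sound. However, there is a concrete error in your asymptotics for $\eta_k$ at infinity that makes the $|k|\ge 2$ paragraph incoherent.

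With your definition $\eta_k(r)=\zeta_k(r)\int_1^r \frac{ds}{s\zeta_k(s)^2}$ and the paper's behaviour $\zeta_k(s)=s^{|k|}(1+O(s^{-2|k|}))$ for $|k|\ge 2$, one has
\[
\int_1^r \frac{ds}{s\zeta_k(s)^2}=\int_1^r s^{-2|k|-1}\bigl(1+O(s^{-2|k|})\bigr)\,ds
=\frac{1}{2|k|}\bigl(1-r^{-2|k|}\bigr)+O\!\left(|k|^{-1}\right),
\]
which tends to the \emph{constant} $\tfrac{1}{2|k|}$ as $r\to\infty$. Hence $\eta_k(r)\sim \tfrac{1}{2|k|}\,r^{|k|}$ at infinity, not $\tfrac{1}{2|k|}\,r^{-|k|}$ as you wrote. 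Both of your fundamental solutions then grow at infinity, and the sentence ``subtract the homogeneous $B_k\eta_k$ contribution by adding a suitable multiple of $\zeta_k$'' does not produce a bounded function: adding any nonzero multiple of $\zeta_k$ only adds more growth.

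The paper sidesteps this by never introducing $\eta_k$ separately. It writes, for $|k|\ge 2$,
\[
\uppsi_k(r)=\zeta_k(r)\int_r^{\infty}\frac{1}{s\zeta_k(s)^2}\int_0^{s}\zeta_k(\tau)\,\mathfrak{g}_k(\tau)\,\gamma\Gamma^{\gamma-1}_+(\tau)\,\tau\,d\tau\,ds,
\]
i.e.\ it takes the outer integral from $r$ to $\infty$ rather than from $1$ to $r$. This choice of limits is exactly the selection of the decaying second solution $\tilde\eta_k(r)=\zeta_k(r)\int_r^{\infty}\frac{ds}{s\zeta_k(s)^2}\sim \tfrac{1}{2|k|}r^{-|k|}$, and the compact support of $\Gamma^{\gamma-1}_+$ makes the inner integral constant for $s\ge 1$, so all the asymptotics (and the $|k|^{-2}$ factors you correctly anticipate) fall out directly with no post-hoc adjustment. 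Your argument is repaired simply by replacing $\int_1^r$ by $\int_r^\infty$ in the definition of $\eta_k$ for $|k|\ge 2$; the rest of your outline, including the $|k|=1$ case and the Gronwall control of $\zeta_k/r^{|k|}$ uniformly in $k$, then goes through.
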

    \begin{proof}
        Using variation of parameters, we can write $\uppsi_{k}$ in terms of $\zeta_{k}$ and $\mathfrak{g}_{k}$ explicitly. Taking into account the growth or decay of each $\zeta_{k}$ at either $0$ or $\infty$, we have
        \begin{align}
            \uppsi_{k}=\twopartdef{\zeta_{k}\int_{0}^{r}\frac{1}{s\zeta_{k}\left(s\right)^{2}}\int_{0}^{s}\zeta_{k}\left(\tau\right)\mathfrak{g}_{k}\left(\tau\right)\gamma\Gamma^{\gamma-1}_{+}\left(\tau\right)\tau\ d\tau\ ds,}{\left|k\right|=1,}{\zeta_{k}\int_{r}^{\infty}\frac{1}{s\zeta_{k}\left(s\right)^{2}}\int_{0}^{s}\zeta_{k}\left(\tau\right)\mathfrak{g}_{k}\left(\tau\right)\gamma\Gamma^{\gamma-1}_{+}\left(\tau\right)\tau\ d\tau\ ds,}{\left|k\right|\geq2.}\label{variation-of-parameters}
        \end{align}
        The properties \eqref{vortex-linearised-equation-fourier-coefficients-behaviour-lemma-statement-1}--\eqref{vortex-linearised-equation-fourier-coefficients-behaviour-lemma-statement-2} can then be inferred from \eqref{variation-of-parameters} along with the additional fact that
        \begin{align*}
            \int_{0}^{s}\zeta_{k}\left(\tau\right)\mathfrak{g}_{k}\left(\tau\right)\gamma\Gamma^{\gamma-1}_{+}\left(\tau\right)\tau\ d\tau=\int_{0}^{1}\zeta_{k}\left(\tau\right)\mathfrak{g}_{k}\left(\tau\right)\gamma\Gamma^{\gamma-1}_{+}\left(\tau\right)\tau\ d\tau.
        \end{align*}
        for all $s\geq1$, due to the fact that $\supp{\Gamma^{\gamma-1}_{+}}\subset B_{1}\left(0\right)$.
    \end{proof}
    Using the above results, we define $\varrho_{k}\left(r\right)$ such that
    \begin{align}
        \mathfrak{g}_{k}\left(r\right)=r^{\left|k\right|}.\label{vortex-linearised-equation-rk-error-solution}
    \end{align}
    
	\bigskip\noindent
	{\bf Acknowledgements:}
	J.~D\'avila has been supported  by  a Royal Society  Wolfson Fellowship, UK, Grant
RSWF/FT/191007. M.~del Pino has been supported by the Royal Society Research Professorship grant RP-R1-180114 and by  the  ERC/UKRI Horizon Europe grant  ASYMEVOL, EP/Z000394/1. M. Musso has been supported by EPSRC research Grant EP/T008458/1. 
    S. Parmeshwar has been supported by EPSRC research Grants EP/T008458/1 and EP/V000586/1, and by the CY Initiative of Excellence (Grant
``Investissements d'Avenir'' ANR-16-IDEX-0008).

\bibliography{main} 

\begin{thebibliography}{10}

\bibitem{abe-choi}
{\sc K.~Abe and K.~Choi}, {\em Stability of {L}amb dipoles}, Arch. Ration. Mech. Anal., 244 (2022), pp.~877--917.

\bibitem{ao}
{\sc W.~Ao, J.~D\'{a}vila, M.~del Pino, M.~Musso, and J.~Wei}, {\em Travelling and rotating solutions to the generalized inviscid surface quasi-geostrophic equation}, Trans. Amer. Math. Soc., 374 (2021), pp.~6665--6689.

\bibitem{aref1979}
{\sc H.~Aref}, {\em Motion of three vortices}, Phys. Fluids, 22 (1979), pp.~393--400.

\bibitem{aref}
\leavevmode\vrule height 2pt depth -1.6pt width 23pt, {\em Point vortex dynamics: a classical mathematics playground}, J. Math. Phys., 48 (2007), pp.~065401, 23.

\bibitem{Bedrossian}
{\sc J.~Bedrossian, M.~Coti~Zelati, and V.~Vicol}, {\em Vortex axisymmetrization, inviscid damping, and vorticity depletion in the linearized 2{D} {E}uler equations}, Ann. PDE, 5 (2019), pp.~Paper No. 4, 192.

\bibitem{bnl}
{\sc G.~R. Burton, H.~J. Nussenzveig~Lopes, and M.~C. Lopes~Filho}, {\em Nonlinear stability for steady vortex pairs}, Comm. Math. Phys., 324 (2013), pp.~445--463.

\bibitem{caowei}
{\sc D.~Cao, Z.~Liu, and J.~Wei}, {\em Regularization of point vortices pairs for the {E}uler equation in dimension two}, Arch. Ration. Mech. Anal., 212 (2014), pp.~179--217.

\bibitem{choi-lim}
{\sc K.~Choi and D.~Lim}, {\em Stability of radially symmetric, monotone vorticities of 2{D} {E}uler equations}, Calc. Var. Partial Differential Equations, 61 (2022), pp.~Paper No. 120, 27.

\bibitem{danceryan}
{\sc E.~N. Dancer and S.~Yan}, {\em The {L}azer-{M}c{K}enna conjecture and a free boundary problem in two dimensions}, J. Lond. Math. Soc. (2), 78 (2008), pp.~639--662.

\bibitem{DDMP2VP2023}
{\sc J.~Davila, M.~del Pino, M.~Musso, and S.~Parmeshwar}, {\em Global in time vortex configurations for the 2d {E}uler equations}, Arxiv,  (2023).

\bibitem{DDMW2020}
{\sc J.~Davila, M.~Del~Pino, M.~Musso, and J.~Wei}, {\em Gluing methods for vortex dynamics in {E}uler flows}, Arch. Ration. Mech. Anal., 235 (2020), pp.~1467--1530.

\bibitem{ej1}
{\sc T.~M. Elgindi and I.-J. Jeong}, {\em On singular vortex patches, {II}: long-time dynamics}, Trans. Amer. Math. Soc., 373 (2020), pp.~6757--6775.

\bibitem{elgindimurraysaid}
{\sc T.~M. Elgindi, R.~W. Murray, and A.~R. Said}, {\em On the long-time behavior of scale-invariant solutions to the 2d euler equation and applications}, Arxiv,  (2022).

\bibitem{gallay1}
{\sc T.~Gallay and V.~Sverak}, {\em Arnold's variational principle and its application to the stability of planar vortices}, 2021.

\bibitem{gallay2}
\leavevmode\vrule height 2pt depth -1.6pt width 23pt, {\em Vanishing viscosity limit for axisymmetric vortex rings}, 2023.

\bibitem{HmidiMasmoudi}
{\sc Z.~Hassainia, T.~Hmidi, and N.~Masmoudi}, {\em Rigorous derivation of the leapfrogging motion for planar euler equations}, Arxiv,  (2023).

\bibitem{HmidiMateu}
{\sc T.~Hmidi and J.~Mateu}, {\em Existence of corotating and counter-rotating vortex pairs for active scalar equations}, Comm. Math. Phys., 350 (2017), pp.~699--747.

\bibitem{Ionescu}
{\sc A.~D. Ionescu and H.~Jia}, {\em Linear vortex symmetrization: the spectral density function}, Arch. Ration. Mech. Anal., 246 (2022), pp.~61--137.

\bibitem{ionescu2}
\leavevmode\vrule height 2pt depth -1.6pt width 23pt, {\em Non-linear inviscid damping near monotonic shear flows}, Acta Math., 230 (2023), pp.~321--399.

\bibitem{Jeongsaid}
{\sc i.-J. Jeong and A.~R. Said}, {\em Logarithmic spirals in 2d perfect fluids}, Arxiv,  (2023).

\bibitem{Yudovich1963}
{\sc V.~I. Judovi\v{c}}, {\em Non-stationary flows of an ideal incompressible fluid}, \v{Z}. Vy\v{c}isl. Mat i Mat. Fiz., 3 (1963), pp.~1032--1066.

\bibitem{majdabertozzi}
{\sc A.~J. Majda and A.~L. Bertozzi}, {\em Vorticity and incompressible flow}, vol.~27 of Cambridge Texts in Applied Mathematics, Cambridge University Press, Cambridge, 2002.

\bibitem{MP1993}
{\sc C.~Marchioro and M.~Pulvirenti}, {\em Vortices and localization in {E}uler flows}, Comm. Math. Phys., 154 (1993), pp.~49--61.

\bibitem{MarchioroPulvirentiBook}
\leavevmode\vrule height 2pt depth -1.6pt width 23pt, {\em Mathematical theory of incompressible nonviscous fluids}, vol.~96 of Applied Mathematical Sciences, Springer-Verlag, New York, 1994.

\bibitem{masmoudizhao}
{\sc N.~Masmoudi and W.~Zhao}, {\em Stability threshold of two-dimensional {C}ouette flow in {S}obolev spaces}, Ann. Inst. H. Poincar\'{e} C Anal. Non Lin\'{e}aire, 39 (2022), pp.~245--325.

\bibitem{novikovsedov}
{\sc E.~Novikov and Y.~Sedov}, {\em Vortex collapse}, J. Exp. Theor. Phys., 50 (1979), pp.~297--301.

\bibitem{serfati}
{\sc P.~Serfati}, {\em Borne en temps des caract\'eristiques de l’ \'equation d’euler 2d \'a tourbillon positif et localisation pour le mod\'ele point-vortex}, Manuscript,  (1998).

\bibitem{sch}
{\sc A.~Shnirelman}, {\em On the long time behavior of fluid flows}, Procedia IUTAM,  (2013), pp.~151--160.

\bibitem{smets-vanschaftingen}
{\sc D.~Smets and J.~Van~Schaftingen}, {\em Desingularization of vortices for the {E}uler equation}, Arch. Ration. Mech. Anal., 198 (2010), pp.~869--925.

\bibitem{sve}
{\sc V.~Sver\'ak}, {\em Course notes},  (2011).

\bibitem{Wolibner1933}
{\sc W.~Wolibner}, {\em Un theor\`eme sur l'existence du mouvement plan d'un fluide parfait, homog\`ene, incompressible, pendant un temps infiniment long}, Math. Z., 37 (1933), pp.~698--726.

\bibitem{Zbarsky}
{\sc S.~Zbarsky}, {\em From point vortices to vortex patches in self-similar expanding configurations}, Comm. Math. Phys., 388 (2021), pp.~707--733.

\end{thebibliography}
\bibliographystyle{siam}
\end{document}